\def\cP{{\mathcal{P}}}
\def\cO{{\mathcal{O}}}
\def\RR{{\mathbb R}}
\def\Z{{\mathbb Z}}
\def\C{{\mathbb C}}
\def\raw{\rightarrow}
\def\tL{\tilde{L}}
\def\tf{\tilde{f}}
\def\tg{\tilde{g}}
\def\tG{\tilde{G}}
\DeclareMathOperator{\Id}{Id}
\DeclareMathOperator{\Lip}{Lip}
\DeclareMathOperator{\esssup}{ess\: sup}
\DeclareMathOperator{\essinf}{ess\: inf}
\def\hJ{{\hat{J}}}
\def\hX{{\hat{X}}}
\def\hell{{\hat{\ell}}}
\def\vv{{\mathbf{v}}}
\def\One{\mathbbm{1}}
\def\floor#1{\lfloor #1 \rfloor}
\def\oi{\overline{i}}
\def\hx{\hat{x}}
\def\homega{\hat{\omega}}
\def\cSL{\mathcal{S}^{Lip}}
\def\cSr{\mathcal{S}^{r}}
\def\normL#1{\|#1\|_{Lip}}
\def\hpsi{\hat{\psi}}
\def\hphi{\hat{\phi}}
\DeclareMathOperator{\diag}{diag}
\def\bv{{\mathbf{v}}}
\def\bu{{\mathbf{u}}}
\def\bb{{\mathbf{b}}}
\def\bm{{\mathbf{m}}}
\def\ceiling#1{\left\lceil #1 \right \rceil}
\def\SPLk{\overline{\mathcal{S}}^{PL}_k}
\def\SPL{\overline{\mathcal{S}}^{PL}}
\def\TPLk{\overline{\mathcal{T}}^{PL}_k}
\def\TPL{\overline{\mathcal{T}}^{PL}}
\def\TPLkn\overline{{\mathcal{T}}^{PL}_{k, n}}
\def\bell{{\pmb{\ell}}}
\def\hbell{\hat{\bell}}
\def\hell{\hat{\ell}}
\def\oell{\overline{\ell}}
\def\obell{\overline{\bell}}
\def\bw{{\mathbf{w}}}
\def\hbw{{\hat{\bw}}}
\def\hw{{\hat{w}}}
\def\ha{\hat{a}}
\def\ow{\overline{w}}
\def\obw{\overline{\bw}}
\def\sw{\breve{w}}
\def\sbw{\breve{\bw}}
\def\oobw{\overline{\obw}}
\def\oow{\overline{\ow}}
\def\ob{\overline{b}}
\def\oomega{\overline{\omega}}
\def\oX{\overline{X}}
\def\oJ{\overline{J}}
\def\hb{\hat{b}}
\def\hbb{{\hat{\mathbf{b}}}}
\def\sw{\breve{w}}
\def\sz{\breve{z}}
\def\sx{\breve{x}}
\def\sJ{\breve{J}}
\def\sX{\breve{X}}
\def\sZ{\breve{Z}}
\def\pvl{\phi_{\bw,\bell}}
\DeclareMathOperator{\myspan}{span}
\begin{document}

\title{Pinched Arnol'd tongues for  Families of circle maps}

\author{Philip Boyland}
\address{Department of
    Mathematics\\University of Florida\\372 Little Hall\\Gainesville\\
    FL 32611-8105, USA}
\email{boyland@ufl.edu}

\begin{abstract}
The family of circle maps
\begin{equation*}
f_{b, \omega} (x) = x + \omega + b\, \phi(x)
\end{equation*}
is used as a simple model for a periodically forced oscillator.
The parameter $\omega$ represents the unforced frequency,
$b$ the coupling, and $\phi$ the forcing. When $\phi = 
\frac{1}{2 \pi} \sin(2 \pi x)$ this is the classical Arnol'd
standard family. Such families are often studied in the 
$(\omega,b)$-plane via the so-called tongues $T_\beta$ consisting of 
all $(\omega,b)$ such that $f_{b, \omega}$ has rotation
number $\beta$. The interior of the rational tongues $T_{p/q}$ represent the
system mode-locked into a  $p/q$-periodic response.
Campbell, Galeeva, Tresser, and Uherka proved that when the forcing
is a PL map with $k=2$  breakpoints, 
all $T_{p/q}$ pinch down to a width of a single point at multple
values when $q$ large enough.  In contrast,
we prove that it generic amongst PL forcings with a given $k\geq 3$ breakpoints
that there is no such pinching of any of the rational tongues. We also
prove that the absence of pinching is generic for Lipschitz
and  $C^r$ ($r>0$) forcing. 
\end{abstract}

\maketitle

\section{Introduction}
In \cite{arnold} Arnol'd introduced an example of a  parameterized
family of circle maps  and noted that it
provides a paradigm for describing  periodically forced oscillators.
It illustrates the common phenomenon of rational mode locking and 
irrational quasiperiodicity.
 The family,  defined by its lift as,
\begin{equation*}
\tf_{b, \omega} (x) = x + \omega + \frac{b}{2 \pi} \sin(2 \pi x)
\end{equation*}
has come to be called the Arnol'd family, or more commonly,
the standard family. In keeping with its physical origins
$\omega$ is sometimes called the intrinsic frequency, $b$ the coupling,
and $\frac{\sin(2 \pi x)}{2 \pi}$ the forcing.
A fairly large literature has grown up
around the family, both from a purely mathematical point of view 
and with a view to applications.
In addition, the bifurcation diagrams of many higher dimensional
exhibit the same kind of structures.

When $b<1$, the map $f_{b,\omega}$ is a diffeomorphism and 
the family's bifurcation diagram in the $(\omega, b)$-plane is analyzed 
using the rotation number $\beta$  of the diffeomorphism.
When $\beta = p/q$ this set of parameters has a sharp point
on the $\omega$-axis and opens out    
into a horn or tongue, hence the name resonance horn
or Arnol'd tongue (see Figure~\ref{firsttwo}, left).
 When $\beta$ is an irrational,   the set
is a line connecting the line $b=0$ to the line
$b=1$. Herman (\cite{herman1}) proved basic properties
of  these sets, in particular, for each $0<b\leq 1$ the 
horizontal lines of constant 
$b$  intersect the rational tongues in a nontrivial interval.

The dynamics of the piecewise linear (PL)
analog of the quadratic family on the interval,
the tent family, have been much studied. The analog on
the circle replaces the sine term with a
triangle wave.
To those not familiar with PL dynamics 
the resulting bifurcation diagram yields something
of a surprise;
 as the rational tongues with $q>2$ move upward to increasing $b$,
they  pinch down to a point before spreading out again to hit
the $b=1$ line (see Figure~\ref{firsttwo}, right). In the language of
this paper  
the relevant theorem from \cite{trouble} says
\begin{theorem}[Campbell, Galeeva, Tresser, and Uherka]\label{pinchmain}
Assume that $\phi$ is a PL, standard-like  forcing with two
break points and $\delta$ is the width of the interval where
$\phi' < 0$. In the $(\omega, b)$-bifurcation diagram of
the   family $f_{b,\omega}(x) = x + \omega + b\, \phi(x)$, for each $q$ the
$p/q$-tongue pinches in the $\ceiling{q \delta}-1$ places
at coupling value $b$ which are solutions of the polynomial
equation
$$
 (1-y)^j (1 + \frac{\delta}{1-\delta}y)^{q-j} = 1
$$
for $1 \leq j \leq \ceiling{q \delta}-1$.
\end{theorem}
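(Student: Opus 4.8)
The plan is to translate the statement into a combinatorial question about periodic orbits of PL circle homeomorphisms. Assume $\gcd(p,q)=1$, and normalize the standard-like forcing so that $\phi'\equiv -1$ on the arc $I_-=[a,a+\delta]$ of length $\delta$ on which $\phi'<0$ and $\phi'\equiv \frac{\delta}{1-\delta}$ on the complementary arc $I_+$; then for $0<b<1$ the map $f=f_{b,\omega}$ is a PL circle homeomorphism whose derivative is the constant $\lambda_-:=1-b$ on $I_-$ and the constant $\lambda_+:=1+\frac{\delta}{1-\delta}b$ on $I_+$, and the displayed polynomial equation is exactly $\lambda_-^{\,j}\lambda_+^{\,q-j}=1$ with $y=b$. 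The first step is the remark that the $p/q$-tongue is pinched to a single point at coupling $b$ if and only if there is an $\omega$ with $\tf_{b,\omega}^{\,q}(x)=x+p$ for all $x$ (equivalently, $f_{b,\omega}^{\,q}$ is the rigid rotation through $p/q$). This comes from iterating $\tf_{b,\omega'}=\tf_{b,\omega}+(\omega'-\omega)$ to get $\tf_{b,\omega'}^{\,q}(x)\ge \tf_{b,\omega}^{\,q}(x)+(\omega'-\omega)$ for $\omega'>\omega$: writing $G_\omega(x)=\tf_{b,\omega}^{\,q}(x)-x-p$, both $\min_x G_\omega$ and $\max_x G_\omega$ are continuous and strictly increasing in $\omega$, the $p/q$-plateau in $\omega$ (a nonempty closed interval, by Herman's results or directly) is the set on which $\min_x G_\omega\le 0\le\max_x G_\omega$, and it collapses to a point precisely when both quantities vanish at a common $\omega$, i.e.\ $G_\omega\equiv 0$.

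Next I would prove necessity: if $\tf_{b,\omega}^{\,q}(x)=x+p$ for all $x$ then $b$ satisfies the polynomial for some $j$. Since the $q$-th iterate has derivative $1$, the chain rule forces $\lambda_-^{\,j(x)}\lambda_+^{\,q-j(x)}=1$ for every $x$, where $j(x):=\#\{0\le i<q:f^i(x)\in I_-\}$; as $\lambda_-\ne\lambda_+$ whenever $b\ne 0$, the function $j\mapsto \lambda_-^{\,j}\lambda_+^{\,q-j}$ is strictly monotone, so $j(x)$ must be a constant $j$, and $b$ solves $\lambda_-^{\,j}\lambda_+^{\,q-j}=1$. To see that $1\le j\le q-1$, and to prepare the converse, I would run a short combinatorial argument on the common periodic orbit: every orbit is $q$-periodic with the cyclic order of an orbit of the rotation through $p/q$, the count $j(x)$ changes only when some $f^i(x)$ crosses an endpoint of $I_-$, and tracking the sign of each such crossing together with the constancy of $j(x)$ forces the two endpoints of $I_-$ to lie on one single periodic orbit $\mathcal O=\{z_0<z_1<\dots<z_{q-1}\}$; then $j$ equals the number $r$ of the $q$ arcs $(z_k,z_{k+1})$ that are contained in $I_-$.

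Then I would establish the converse and the count. Passing to logarithms, $\log\!\big(\lambda_-^{\,j}\lambda_+^{\,q-j}\big)$ is strictly concave on $(0,1)$, equals $0$ at $b=0$, has derivative $\frac{q\delta-j}{1-\delta}$ at $b=0$, and tends to $-\infty$ as $b\to 1^-$; hence it has a unique zero in $(0,1)$ exactly when $1\le j<q\delta$, that is for $j\in\{1,\dots,\ceiling{q\delta}-1\}$, and the corresponding roots are pairwise distinct since two coinciding would force $\lambda_-=\lambda_+$. For such a $j=r$ and its root $b_r$ I would build the orbit by hand: solve the linear recursion $\ell_{k+p}=\lambda_-\ell_k$ for the $r$ indices $k$ labelling arcs inside $I_-$ and $\ell_{k+p}=\lambda_+\ell_k$ for the others (indices mod $q$), which closes up consistently precisely because the product of the $q$ multipliers around the single cycle $k\mapsto k+p$ is $\lambda_-^{\,r}\lambda_+^{\,q-r}=1$; normalizing $\sum_k\ell_k=1$ and using the explicit $\lambda_\pm$, one checks $\sum_{k<r}\ell_k=\delta$ automatically, so $z_k:=a+\sum_{i<k}\ell_i$ places the endpoints of $I_-$ at $z_0$ and $z_r$, and the affine interpolation with $f(z_k)=z_{k+p}$ has exactly the slopes $\lambda_\mp$ on $I_\pm$. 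Hence this $f$ equals $f_{b_r,\omega_r}$ for a suitable $\omega_r$, $\mathcal O$ is a $p/q$-periodic orbit, and $f^q$ is affine of slope $\lambda_-^{\,r}\lambda_+^{\,q-r}=1$ fixing each $z_k$, so $\tf^q(x)=x+p$ for all $x$; by the equivalence established at the outset, $T_{p/q}$ is pinched at $b=b_r$. This produces exactly $\ceiling{q\delta}-1$ distinct pinch values, namely the asserted roots.

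The main obstacle I anticipate is the combinatorial middle step: rigorously deducing from $Df^q\equiv 1$ that the itinerary count is constant, controlling the signs of all the breakpoint crossings around the circle to conclude that the two endpoints of $I_-$ must share one periodic orbit, and managing the index arithmetic modulo $q$ so that this constant is correctly identified with the number $r$ of orbit arcs inside $I_-$. By comparison, the rotation-number reduction at the outset and the log-concavity root count are routine.
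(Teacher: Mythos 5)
Your proposal is correct, and its main step goes by a genuinely different route than the paper's. Both arguments share the same skeleton at the ends: the reduction of ``pinched at $(\omega,b)$'' to $\tilde{f}^{\,q}_{b,\omega}=R_p$ (which the paper establishes earlier via the tongue-boundary characterization), and the root count, where your log-concavity argument is the paper's calculus lemma (Lemma~\ref{calculus}) in different clothing, giving a unique root $b_{q,j,w}\in(0,1)$ exactly for $1\le j\le\lceil q\delta\rceil-1$, with distinct roots for distinct $j$. The difference is in producing the pinch: the paper works at the left boundary of $T_{p/q}$ at height $b=b_{q,j,w}$, argues that the tangency point of $\tilde{L}^q$ with $R_p$ can only occur on the orbit of the ``down'' break point (so that break point is $p/q$-periodic), and then forces $(f^q)'\equiv 1$ by an averaging argument: $\int_{S^1}(f^q)'\,d\lambda=1$ combined with the fact that only two consecutive itinerary counts $\gamma,\gamma+1$ occur and the values $p_{q,j,w}(b)$ are strictly ordered in $j$ (Lemma~\ref{lastlem}). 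You instead construct the pinched map directly: prescribe the $r$ arcs inside $I_-$, solve the length recursion $\ell_{k+p}=\lambda_{\pm}\ell_k$ around the single $p$-cycle (the closing condition being exactly $p_{q,r,w}(b_r)=1$), and your claimed identity $\sum_{k<r}\ell_k=\delta$ does check out ($\sum\ell_{k+p}=\lambda_-S+\lambda_+(1-S)=1$ gives $S=(\lambda_+-1)/(\lambda_+-\lambda_-)=\delta$ for the standard-like slopes), so the interpolated PL map has its break points at the endpoints of $I_-$ and hence equals $f_{b_r,\omega_r,\phi}$ for some $\omega_r$, with $\tilde{f}^{\,q}=R_p$ since $f^q$ fixes the orbit and has slope $\lambda_-^r\lambda_+^{q-r}=1$ on every arc. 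Your construction is more explicit and self-contained (it even exhibits the pinch map), whereas the paper's boundary-plus-averaging argument identifies $\omega$ as the left edge of the tongue and reuses machinery (Lemma~\ref{another}, the plausible-polynomial formalism) built for the genericity results. One remark: the ``combinatorial middle step'' you flag as the main obstacle is not actually load-bearing. For necessity, constancy of the itinerary count $j(x)$ follows from strict monotonicity of $j\mapsto\lambda_-^j\lambda_+^{q-j}$ alone, and $b$ then being a root of a listed polynomial follows from your root analysis; for the converse, your by-hand construction never uses the claim that the two break points must a priori share one periodic orbit (in your construction they do, simply because $\gcd(p,q)=1$ puts all $z_k$ on one orbit). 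So that worry can be dropped without loss.
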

The triangle wave forcing corresponds to $\delta = 1/2$. Thus
its $(\omega, b)$-family has pinched $T_{p/q}$ tongues  for all $q>2$
with the number of pinch points in $T_{p/q}$ going to infinity
like $q$.
\begin{figure}\label{firsttwo}
\includegraphics[width=.49\linewidth]{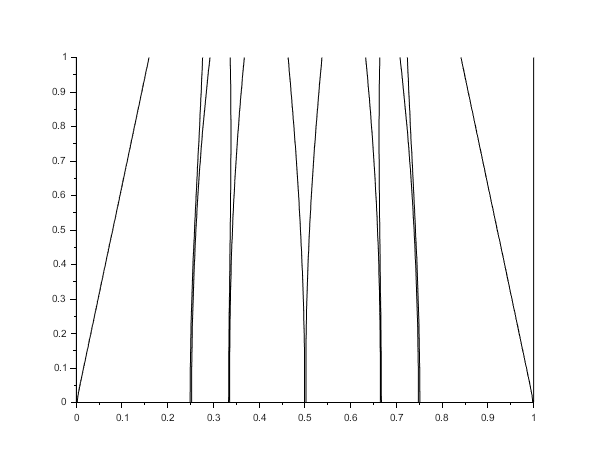}\hfill
\includegraphics[width=.49\linewidth]{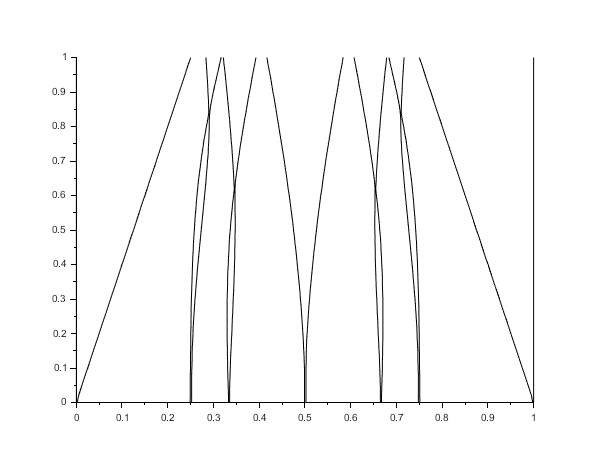}
\caption{Boundaries of Arnol'd $p/q$-tongues for $q\leq 4$. Left, standard
family; right, triangle wave family showing pinching. }
\end{figure}

The pinching in PL circle families with two break points
first appeared in the
literature in a paper by Yang and Hao \cite{first}. They
give explicit formulas for the edges of the pinched $p/q$-tongues
for $q\leq 6$ and $\delta=1/2$. Pinching
played prominent role in the major, comprehensive analysis of 
 classes of PL circle maps with two break points
by Campbell, Galeeva, Tresser, and Uherk
in \cite{trouble}. Meiss and Simpson (\cite{MS1, MS2, MS3, MS4}) and
Simpson (\cite{S2, S3, S4})
have made a thorough analysis of pinching in multiple
dimensional systems via the border collision bifurcation. 
Glendinning, Ma and Montaldi give multiple characterizations
of pinch points and the local scaling of the rotation numbers 
(\cite{paul}). Siyuan Ma has studied a family
of circle maps with four break points and symmetries in terms
of conjugation to rigid rotation which is equivalent to
pinch points (personal communication).
The book by  Mosekild and Zhusubaliyev considers pinched 
resonance regions and is a good resource for PL dynamics.
Various terminology has been used: the pinched Arnol'd tongues are
 sometimes called ``sausages'' or ``lens chains'' and the pinch points 
``shrinking points'' or  ``nodes''. The pinching of resonance regions
has been observed in models of various physical phenomenon
 (see \cite{S4} and \cite{book}  for references). 

Pinching in PL circle families with two breakpoints forces a
question: how common is  this amongst, say, families of the form
\begin{equation}\label{first}
\tf_{b, \omega, \phi}(x) = x + \omega + b\, \phi(x)?
\end{equation}
for various classes of $\phi$'s?
A forcing $\phi$ is called \textit{standard-like} if the
family \eqref{first} behaves like the standard family in 
the sense that when $b<1$, $f_{b,\omega}$ is a homeomorphism,
and  when $b>1$, $f_{b,\omega}$ is not order preserving. The
genericity theorem proved here is:
\begin{theorem}\label{intromain}
For  standard-like forcing $\phi$ it is generic that 
the bifurcation diagram  of their $(\omega, b)$-family
 does not have any pinched tongues in each of these 
classes:
\begin{enumerate}[(a)]
\item PL with $k$ break points for each $k>2$
\item Lipschitz
\item $C^r$ with $r>0$
\end{enumerate}
\end{theorem}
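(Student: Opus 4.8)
\emph{Proof proposal.}
The plan rests on a clean description of pinch points. For coprime $p,q$ say the $p/q$-tongue is \emph{pinched at level} $b_0\in(0,1)$ if the section $\{\omega:\rho(\tf_{b_0,\omega})=p/q\}$ is a single point. I claim this holds exactly when there is an $\omega_0$ with $\tf_{b_0,\omega_0}^q=\mathrm{id}+p$, i.e.\ when $f_{b_0,\omega_0}$ is topologically conjugate to the rotation $R_{p/q}$. Writing $h_\omega(x)=\tf_{b_0,\omega}^q(x)-x-p$, $m(\omega)=\min_x h_\omega$, $M(\omega)=\max_x h_\omega$: since $\tf_{b_0,\omega}$ is strictly increasing in $x$ and in $\omega$, the iterate $\tf_{b_0,\omega}^q(x)$ is strictly increasing in $\omega$ for each $x$, so $m$ and $M$ are strictly increasing; and by Poincar\'{e}'s theory $\rho(\tf_{b_0,\omega})>p/q\iff m(\omega)>0$ and $\rho(\tf_{b_0,\omega})<p/q\iff M(\omega)<0$. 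If the tongue is pinched at $b_0$ with section $\{\omega_0\}$, then $M<0$ on $(-\infty,\omega_0)$ forces $M(\omega_0)\le0$ by continuity, $m>0$ on $(\omega_0,\infty)$ forces $m(\omega_0)\ge0$, and $\rho(\tf_{b_0,\omega_0})=p/q$ forces $m(\omega_0)\le0\le M(\omega_0)$; hence $m(\omega_0)=M(\omega_0)=0$, i.e.\ $h_{\omega_0}\equiv0$. The converse follows from the same strict monotonicity. Therefore ``$\phi$ has no pinched tongue'' is the intersection over the countably many coprime pairs $(p,q)$ of the sets $X\setminus\Phi_{p/q}$, where $X$ is the class at hand and $\Phi_{p/q}=\{\phi\in X:\tf_{b,\omega}^q=\mathrm{id}+p\text{ for some }b\in(0,1),\ \omega\in\mathbb{R}\}$; so it suffices to prove each $\Phi_{p/q}$ is nowhere dense in $X$.

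For the PL class $X=\mathcal P_k$ of standard-like PL forcings with $k$ breakpoints (a semialgebraic set of dimension $2k-2$), I would run a dimension count. Suppose $\tf_{b,\omega}^q=\mathrm{id}+p$ and put $f=f_{b,\omega}$ with breakpoint set $B$, $|B|=k$. Then $B$ lies inside its own finite $f$-orbit $P$, which (because $\rho(f)=p/q$ forces every periodic orbit to have period exactly $q$) is a disjoint union of $\nu\ge\lceil k/q\rceil$ orbits of size $q$; moreover $f$ is affine on each of the $k$ arcs between consecutive breakpoints and permutes $P$ by the $p$-shift along each orbit. Two structural facts cut the count down. First, since $(f^q)'\equiv1$, on every ``box'' (complementary interval of $P$) the product of the slopes of $f$ along the length-$q$ itinerary equals $1$, and comparing neighbouring boxes forces, for each orbit $O_j$, the relation $\prod_{z\in O_j\cap B}\bigl(s_{r(z)}/s_{l(z)}\bigr)=1$ in the arc-slopes to the right and left of $z$; in particular every orbit must contain at least two breakpoints, so $\nu\le k/2$, ruling out many types outright (e.g.\ $q=2$ with $k$ odd). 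Second, for $\nu=1$ the $P$-gaps $g_m=y_{m+1}-y_m$ satisfy $g_{m+p}=s\,g_m$ on the arc of slope $s$ through $y_m$; as $\gcd(p,q)=1$ this recursion cycles through all gaps, forcing the single relation $\prod_a s_a^{m_a}=1$ and expressing every gap — hence every breakpoint position and arc length — in terms of $s_1,\dots,s_k$ and one translation, with $c_a=(s_a-1)/b$, the coupling $b$ (pinned by $\min_a c_a=-1$) and $\omega$ then determined and the periodicity $\sum_a c_a\ell_a=0$ automatic. Thus the $\nu=1$ pinch data of a given type form the image of a semialgebraic map from a space of dimension at most $k$, and the analogous (more intricate) accounting for $\nu\ge2$, carrying the extra relations just mentioned, again gives dimension at most $k$. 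Since $k<2k-2$ exactly when $k>2$, each such image lies in a proper subvariety of $\mathcal P_k$; unioning over the finitely many types and over the countably many $(p,q)$ shows $\bigcup_{p/q}\Phi_{p/q}$ is meager, proving (a). (For $k=2$ the $\nu=1$ pinch data have the same dimension as $\mathcal P_2$ modulo rotation, which is how Theorem~\ref{pinchmain} places pinches over every such forcing.)

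In the Lipschitz and $C^r$ classes the characterization above still applies. Each $\Phi_{p/q}$ is closed: for $\phi$ in a compact set and $q$ fixed, the Arnol'd $O(b^q)$ lower bound on tongue width keeps all pinch levels in a fixed compact subinterval of $(0,1)$, so a sequence $\phi_n\to\phi$ with pinch levels $b_n$ and pinch values $\omega_n$ (the latter confined to a bounded interval by $\tf_{b_n,\omega_n}^q(0)=p$) has a convergent subsequence producing a pinch for $\phi$. To see $\Phi_{p/q}$ has empty interior, fix $\phi$ and $\varepsilon>0$; first replace $\phi$ within $\varepsilon/2$ by a forcing $\phi_1$ with only finitely many pinch levels $b_1,\dots,b_m$ of $T_{p/q}$ (e.g.\ a real-analytic approximant, for which the tongue boundaries are real-analytic). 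At each $b_i$ a first-order computation gives that under $\phi_1\mapsto\phi_1+\varepsilon_i\eta_i$ the width of $T_{p/q}$ at level $b_i$ moves to order $\varepsilon_i\cdot\operatorname{osc}_x\!\bigl(B_{\eta_i}(x)/A(x)\bigr)$, where $A(x)=\partial_\omega\tf_{b_i,\omega_0}^q(x)\ge1$ and $B_\eta(x)=\sum_{t=0}^{q-1}\bigl(\prod_{s>t}f'(f^sx)\bigr)b_i\,\eta(f^tx)$; choosing each $\eta_i$ with $B_{\eta_i}/A$ nonconstant (a generic choice) and the $\varepsilon_i$ small, the combined perturbation makes the width strictly positive at every $b_i$, and by uniform continuity it stays positive elsewhere, yielding a forcing within $\varepsilon$ of $\phi$ outside $\Phi_{p/q}$. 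Intersecting over $(p,q)$ gives (b) and (c).

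The step I expect to be the main obstacle is the $\nu\ge2$ part of the dimension count: one must organize the combinatorial types (how several period-$q$ orbits interleave on the circle, which points are breakpoints, and how the affine arcs thread the non-breakpoint orbit points), show that in each admissible type the threading relations together with the $\nu-1$ ``transition'' relations on the slopes keep the pinch-data dimension at most $k$, and in particular that the reconstruction map from slope data to $(\phi,b,\omega)$ is never dominant onto $\mathcal P_k$ when $k\ge3$. A secondary technical point is the density step in (b)--(c): fixing the complete metric used on the Lipschitz and $C^r$ classes, checking that forcings with only finitely many pinch levels of $T_{p/q}$ are dense there, and promoting the first-order width formula to a two-sided bound valid for all small $\varepsilon$ uniformly near each pinch level.
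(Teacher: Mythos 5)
Your reduction of the theorem to showing that, for each $p/q$, the forcings admitting $\tf^q_{b,\omega}=R_p$ form a small set agrees with the paper's starting point, but the proof has genuine gaps in each class. The most serious is the density step for the Lipschitz class: you propose to replace $\phi$ by a real-analytic (or otherwise tame) approximant within $\varepsilon/2$ in the Lipschitz norm, but smooth functions are not dense in $C^{Lip}(S^1)=W^{1,\infty}(S^1)$ (the space is not even separable); approximating a forcing whose derivative has a jump requires $L^\infty$-convergence of derivatives, which is impossible. This is exactly the obstruction the paper's proof is built around: it discretizes $\phi'$ to a simple function $\sum \hw_i I_{X_i}$, shows a pinch forces several ``plausible'' polynomial equations $\prod(1+bw_{i_j})=1$ to share the root $b$, and then perturbs the weight vector via the implicit function theorem so that no two such equations have a common root. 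Your first-order ``width-opening'' computation also only makes the width positive at the finitely many levels $b_i$ themselves; it does not exclude new pinch levels appearing near $b_i$ after the perturbation, and it does not keep the perturbed forcing in the standard-like class ($\essinf\phi'=-1$), which is the ambient space of the theorem. Relatedly, your claim that $\Phi_{p/q}$ is closed with $b$ ranging over all of $(0,1)$ rests on an ``Arnol'd $O(b^q)$ lower bound on tongue width'' that is not available for general Lipschitz or $C^r$ standard-like forcings; the paper sidesteps this by working with $b\in[1/n,1]$, proving the sets $A_{p/q,n}$ open and dense, and taking the countable intersection over $n$ (so the non-pinching set is $G_\delta$, not open). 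In the $C^r$ case your difficulties evaporate if you use Herman's lemma: nonconstant trigonometric polynomials are never pinching, so suitably rescaled trig-polynomial approximants (rescaling is needed to stay in $\mathcal{S}^r$, and is where the paper's Lemma on smooth density does its work) already lie in the good set, with no analyticity-of-tongue-boundary or width-opening argument required; as written, your assertion that an analytic approximant has only finitely many pinch levels is unsubstantiated.

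For the PL class your route is genuinely different from the paper's: a semialgebraic dimension count (pinch data of each combinatorial type is the image of a map from a space of dimension at most $k$, versus ambient dimension growing like $2k$) instead of the paper's explicit perturbations. That could work in principle, and for a single cyclic orbit your gap-recursion accounting is plausible, but the case you yourself flag --- several period-$q$ orbits interleaving, with the transition relations among slopes --- is precisely where the count must be verified, and it is left unproven; the paper handles exactly this dichotomy by different means (for non-cyclic configurations it perturbs the weights so that the induced polynomials cannot share a root, using that each orbit carries at least two break points; for cyclic configurations it perturbs the length ratio $\ell_1/\ell_2$ while freezing the weights, which forces the contradictory identity $\oell_1/\oell_2=\hell_1/\hell_2$). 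Until the $\nu\ge 2$ dimension bound is actually established, part (a) of your argument is a program rather than a proof; parts (b) and (c) need the repairs above, with (b) requiring an entirely different density mechanism.
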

There is a different topology for each class. For each $k$ the PL
forcings are topologized by the location of the break points 
and the slopes in the regions between them. The Lipschitz and
$C^r$ forcings use the standard Lipschitz and $C^r$ norms.

It is worth pointing out what such genericity results say
and don't say. Theorem~\ref{intromain} does not say that
no pinching happens in the given classes, just that it
doesn't happen for the topologically typical forcing, i.e. 
for a dense, $G_\delta$ set of forcings. The complement of
such a set can be dense. In addition, since the rotation
number of a circle homeomorphism is continuous in the 
$C^0$ topology, any forcing near one that pinches
will exhibit   "near pinching", and so pinching can empirically
look stable under perturbation of the forcing. 

 One thing is clear though from
Theorem~\ref{intromain}, the case of $k=2$ breakpoints is
special. There are various characterizations of a map
$f_{b,\omega}$ 
corresponding to a pinch point of $T_{p/q}$: $\tf_{b,\omega}^q = R_p$
where $R_p$ is translation by $p$ or
$f_{b,\omega}$ is conjugate to $R_{p/q}$ or all the breakpoints 
of $f_{b,\omega}$ are on periodic orbits or $f_{b,\omega}$
preserves an absolutely continuous invariant measure whose density
is a step function (Theorem~\ref{another} below as well as \cite{paul}). 
All the characterizations point
to what a special coincidence pinch points are. When $k>2$ there
 is sufficient freedom to perturb away this coincidence. The 
questions of genericity in terms of measure and higher dimensional
maps with more than two regions of definitions
deserve attention. 

Herman (\cite{herman1},  III.4.5)
 proved a result related to Theorem~\ref{intromain} and
our proofs
are modeled in part on his. He first observed that a $p/q$-tongue
pinches at $(\omega, b)$ if and only if $\tf_{b,\omega, \phi}^q = R_p$
where $R_\beta$ is translation by $\beta$. His result
says:
\begin{theorem}[Herman]\label{introherman}
For the generic $C^r$ ($r \geq 0$) diffeomorphism $g$ of the circle
for all $\omega$ and $p/q$, $(R_\omega\circ g)^q \not = R_p$.
\end{theorem}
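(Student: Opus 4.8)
The natural approach is a Baire category argument. First I would cut down the family of conditions. If $(R_\omega\circ g)^q=R_p$ as lifts and $p/q=p'/q'$ in lowest terms, then $h:=R_\omega\circ g$ has rotation number $p/q$; the relation $h^q=R_p$ forces $h^q=\mathrm{id}$ on the circle, so every orbit of $h$ is periodic, and since a circle homeomorphism of rotation number $p'/q'$ has all periodic orbits of period exactly $q'$, the $q'$-th iterate of the lift translates lifted periodic points by $p'$, i.e. $h^{q'}=R_{p'}$ already. So the set to control is the countable union
\[
B=\bigcup_{\gcd(p,q)=1}B_{p/q},\qquad B_{p/q}=\bigl\{\,g\in\mathrm{Diff}^r_+(S^1):(R_\omega\circ g)^q=R_p\text{ for some }\omega\,\bigr\},
\]
and it is enough to show each $B_{p/q}$ is closed with empty interior. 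Closedness is routine: if $g_n\to g$ in $C^r$ with witnesses $\omega_n$, pass to a subsequence with $\omega_{n_k}\to\omega$; since $R_{\omega_{n_k}}\to R_\omega$ and composition, hence the $q$-fold iteration map $f\mapsto f^q$, is continuous on $\mathrm{Diff}^r_+(S^1)$ (for $1\le r\le\infty$, and on $\mathrm{Homeo}_+(S^1)$ for $r=0$), one gets $(R_\omega\circ g)^q=\lim(R_{\omega_{n_k}}\circ g_{n_k})^q=R_p$, so $g\in B_{p/q}$.

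The real content is that $B_{p/q}$ has empty interior, and the point that makes it work---the crux, and the step I expect to be the main obstacle to organize cleanly---is that \emph{for a fixed $g$ the witness $\omega$ is unique}. Lifting to $\mathbb R$, for any $z$ the map $\omega\mapsto(R_\omega\circ g)^q(z)$ is strictly increasing: its $\omega$-derivative is $\ge1$ by an immediate induction on $q$ using $Dg>0$ (for $r=0$ one gets strict monotonicity directly from $g$ being strictly increasing). Hence $(R_\omega\circ g)^q(z)=z+p$ holds for at most one $\omega$. Now take $g\in B_{p/q}$ with witness $\omega_0$ and set $h_0=R_{\omega_0}\circ g$, so $h_0^{q'}=R_{p'}$; fix a periodic orbit $\{y_0,\dots,y_{q'-1}\}$ and note $\prod_{i=0}^{q'-1}Dg(y_i)=\bigl((h_0^{q'})'\bigr)(y_0)=1$. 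Pick a second point $z_0$ off this orbit, and put $g_1=g+\eta$ where $\eta$ is a $C^r$-small $1$-periodic bump supported in a tiny neighbourhood of $y_0$ missing $\{y_1,\dots,y_{q'-1}\}$ and the $h_0$-orbit of $z_0$, with $\eta(y_0)=0$ and $D\eta(y_0)<0$. Since $g_1=g$ off $\operatorname{supp}\eta$ and $\eta(y_0)=0$, the set $\{y_i\}$ is still a $q'$-periodic orbit of $R_{\omega_0}\circ g_1$, but its multiplier is now $1+D\eta(y_0)\prod_{i\ge1}Dg(y_i)<1$; so $R_{\omega_0}\circ g_1$ has an attracting periodic orbit, and the derivative of $(R_{\omega_0}\circ g_1)^q$ at $y_0$ is $<1$, whence $(R_{\omega_0}\circ g_1)^q\ne R_p$. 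On the other hand $\eta$ vanishes along the $h_0$-orbit of $z_0$, so $(R_{\omega_0}\circ g_1)^q(z_0)=z_0+p$; by the uniqueness above, $\omega_0$ is therefore the \emph{only} possible witness for $g_1$, and we have just excluded it. Hence $g_1\notin B_{p/q}$ with $\|g-g_1\|_{C^r}$ arbitrarily small, proving $B_{p/q}$ has empty interior.

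Baire's theorem then gives that the complement of $B$ is a dense $G_\delta$, which is the assertion. The delicate points are the uniqueness of the witness (elementary but load-bearing: it reduces the task to destroying one equation rather than a whole tongue's worth of them), the bookkeeping that keeps $\operatorname{supp}\eta$ off the orbit of $z_0$ while genuinely lowering the multiplier at $y_0$, and, when $r=0$, replacing the multiplier computation by the topological fact that a topologically attracting periodic orbit---arranged by making $h_0^{q'}$ cross the diagonal transversally at $y_0$---obstructs $h^q=\mathrm{id}$. I would carry all this out in the orientation-preserving component $\mathrm{Diff}^r_+(S^1)$; the orientation-reversing component only enters for even $q$ and is handled the same way.
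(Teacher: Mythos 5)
Your argument is correct in substance, but it is a genuinely different route from the one the paper leans on: the paper does not prove Theorem~\ref{introherman} at all (it is quoted from Herman, \cite{herman1} III.4.5), and where the paper proves its analogous genericity statements (Sections 4--5) it follows Herman's scheme: the bad set for fixed $p/q$ is closed by the same compactness-of-witnesses argument you give, and \emph{density} of the complement comes not from an explicit perturbation but from Lemma~\ref{herman} -- maps extending to entire functions, in particular nonconstant trigonometric polynomials, never satisfy $(R_\omega\circ g)^q=R_p$ -- together with density of (rescaled) trigonometric polynomials in $C^r$. Your proof replaces that analytic rigidity input by a hands-on perturbation: uniqueness of the witness $\omega$ (strict monotonicity of $\omega\mapsto(R_\omega\circ g)^q(z)$), a localized bump preserving one periodic orbit of $R_{\omega_0}\circ g$ while leaving a second orbit untouched, and the multiplier computation $1+D\eta(y_0)\prod_{i\ge1}Dg(y_i)<1$ killing the only admissible $\omega$. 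What each buys: yours is self-contained, needs no analyticity, and handles $r=0$ and low regularity uniformly -- which is exactly the ``by hand'' philosophy the paper is forced into for the Lipschitz and PL classes, where trigonometric polynomials are unavailable; Herman's route is much shorter once the entire-function lemma is granted and is what the paper can actually reuse, since there the perturbation must stay inside the constrained class $\mathcal{S}^r$ (your additive bump would generally leave it). Two small repairs you should make: for $r=0$ an additive bump need not keep $g_1$ a homeomorphism, so realize the perturbation as post-composition with a near-identity homeomorphism supported near $y_0$ (the one-sided ``crossing the diagonal'' argument then goes through); and the closing remark that the orientation-reversing component is ``handled the same way'' is too quick, since for orientation-reversing $g$ the map $\omega\mapsto(R_\omega\circ g)^q(z)$ is no longer monotone and witness uniqueness -- which you correctly identify as load-bearing -- fails; read the theorem, as Herman does, for orientation-preserving diffeomorphisms, or supply a separate (easy but different) argument for that component.
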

There are  similarities and  differences between
the theorems. Theorem~\ref{introherman} focuses on  
the diffeomorphism $g$ itself, not the
standard-like forcing $\phi$. One consequence is that
the topologies involved are different. In addition,
Theorem~\ref{introherman} doesn't consider the $b$ parameter.
On the other hand, Theorem~\ref{introherman}
holds for $C^0$-homeomorphisms $g$, and there is no $C^0$-version of
Theorem~\ref{intromain} here (see Remark~\ref{czero}) but there are Lipschitz
 and PL genericity results here.   Both $C^r$ theorems depend on Herman's
observation that when $\phi$ is a nonconstant trigonometric
polynomial, $(Id + \phi)^q\not = R_p$ for all $p/q$.

In addition, PL homemorphisms and more generally piecewise smooth
homeomorphisms of the circle have been studied by many authors starting
with  Herman (\cite{herman1}, section VI);
subsequent papers include \cite{liousse, liousse1, pl06,
pl14, pl15, pl18}.  Ghazouani shows that amongst PL circle
homeomorphisms with two affine pieces being Morse-Smale
is generic (\cite{ghaz}). Crovisier studied a PL family of circle maps
above the critical line (\cite{sylvain1, sylvain2}) as did
Alsed\`a and Ma\~nosas (\cite{als}). The PL case 
case is included in sections 2 and 3 of \cite{boyland}.

The paper proceeds as follows. In Section~\ref{stdlike}
we characterize standard-like
forcings $\phi$. The $C^r$ and Lipschitz genericity 
theorems are proved in Section~\ref{main}.
They depend on the $C^r$-density lemma (Section~\ref{smooth})
and the  Lipschitz density theorem (Section~\ref{lipschitz}), respectively.
A good portion 
of the paper is spent proving the Lipschitz density theorem,
namely, that Lipschitz forcings with the property that for a given
$p/q$ and $n>1$  and all $\omega$ and $b\in [1/n, 1]$
\begin{equation}\label{need}
\tf_{b, \omega, \phi}^q \not= R_p
\end{equation} 
are dense.

Since trigonometric polynomials are not dense in $C^{Lip}(S^1)$
and indeed this space is not separable, the Lipschitz
density result has to be done ``by hand''. The proof
 has a number of steps. First,
a given Lipschitz $\phi$ is approximated by a $\hphi$ whose
derivative is a simple function with the form $\sum \hw_i I_{X_i}$ for a system
of weights $\hbw$ and a finite partition $\{X_i\}$ of $S^1$.
Then using the implicit function theorem and some linear
algebra,  the weights are perturbed and an integral is taken
to  yield a standard-like $\psi$ for which \eqref{need} holds.

Sections~\ref{notlast} and \ref{last}  take up the PL case. 
The two main tools are the approximation
result proved for Lipschitz functions in Theorem~\ref{nonlin} and
an analysis of the combinatorics of the periodic orbits containing
the break points at pinching parameters. Finally, for completeness,
using the techniques developed in this paper we give an  alternative
proof  of Theorem~\ref{pinchmain}.

\medskip
\noindent\textbf{Acknowledgments}. Thanks to Toby Hall and
Jan Boronski for  useful conversations. I (re)discovered 
pinching using a program written by Toby. After this paper was written
I became aware of the paper \cite{paul} which among its results
 contains those 
in Theorem~\ref{another} here. The two 
papers inevitably share some techniques.

\section{preliminaries}

\subsection{spaces of maps on the circle}
The circle is $S^1 = \RR/\Z$.
The only measure used here is Lebesgue, and is denoted by
 $\lambda$. For a set, $X$, its indicator
function is denoted $I_X$ and if $X\subset Z$, then its
complement in $Z$ is $X^c = Z\setminus X$. For 
a homeomorphism $g:S^1\raw S^1$ the orbit of the point
$x$ is $o(x, g)$.

 The space of all real-valued $C^r$-functions $r\geq 0$
on $S^1$ is denoted $C^r(S^1)$ with the $C^r$-norm $\|\alpha\|_{C^r}$,
and the space of all real-valued Lipschitz functions
is denoted $C^{Lip}(S^1)$ with the Lipschitz norm $\normL{\alpha}$.
 These spaces  will often be treated
as functions $\phi:\RR \raw \RR$ with $\phi(x+1) = \phi(x)$.

We will be concerned here with degree one circle 
maps $f:S^1\raw S^1$. These maps have a lift $\tf:\RR\raw \RR$
which satisfy $\tf(x + 1) = \tf(x) + 1$. A map $f:S^1\raw S^1$
is a continuous degree one circle map if and only if it has a
 lift of the form 
$$
\tf(x) = x + \phi(x) 
$$
with $\phi\in C^0(S^1)$.
\begin{convention}
With one important exception, in the rest of the paper, 
$f$ will be used for both the circle map and its lift.  
The difference will be clear from the context. The exception
concerns the fundamental condition for pinching $\tf^q = R_p$.
In the base the maps satisfy $f^q=\Id$, which loses information,
so it is important to distinguish the two. The lift $\tf$ is 
always chosen so $\tf(0)\in [0,1)$.
\end{convention} 

Both $C^r(S^1)$ and $C^{Lip}(S^1)$ are complete, and so closed subsets 
are Baire spaces.
If $\alpha\in L^1(\lambda)$ is essentially bounded, the infinity norm is
$\|\alpha \|_\infty = \esssup |\alpha| $. 
In finite dimensions, $\|\vv \|_\infty = 
\max |v_i|$.

\subsection{families, rotation numbers and pinching}
When $\phi\in C^{Lip}$ and standard-like as will be discussed in
Section~\ref{stdlike}  the corresponding standard-like family is 
\begin{equation}\label{sdef}
f_{b, \omega, \phi}(x) = x + \omega + b \, \phi(x)
\end{equation}
for $b\in [0,\infty), \omega\in S^1$.
For $g$ a degree-one non-decreasing function its rotation
number is $\rho(g)$. The family~\ref{sdef} is studied
in the $(\omega, b)$ parameter plane using
the sets
\begin{equation*}
T_r = \{(\omega, b) : \rho(f_{b, \omega, \phi}) = r\}.
\end{equation*}
This set is called the $r$-tongue.
The basic properties of the $T_r$ are given in 
\cite{arnold, herman1, boyland, trouble}.

Each set $T_r$ is connected. 
For fixed $b$, the map $\omega\mapsto \rho(f_{b, \omega, \phi})$ is
continuous and non-decreasing. Thus each line of constant $b$ intersects
each $T_r$ in a (perhaps trivial) closed interval. When 
$r = \alpha$ an irrational, $T_r$ is a line segment
connecting the lines $b=0$ to $b=1$. When $r=p/q$
the set $T_{p/q}$ has a left boundary
characterized as all $(\omega, b)$ so that 
$\tf_{b, \omega, \phi}^q(x) \leq x + p$ with equality at some $x$. Similarly,
its right boundary is  all $(\omega, b)$ so that 
$\tf_{b, \omega, \phi}^q(x) \geq x + p$ with equality at some $x$. 
It follows that line of constant $b$ intersects $T_{p/q}$ in just one point
$(\omega, b)$ if and only if and $\tf_{b, \omega, \phi}^q  = R_p$.
This prompts the definition:
\begin{definition}
The forcing $\phi$ is called $p/q$-pinching if
for some $b\in (0, 1], \omega\in S^1$,
\begin{equation*}
\tf_{b,\omega,\phi}^q = R_p.
\end{equation*}
In this case, $(b,\omega, \phi)$ is called a $p/q$-pinch point.
\end{definition}
When we write $p/q$ it is always assumed that $p$ and $q$ 
are relatively prime and so $q>1$.
Note that when $b=0$, we obviously have $\tf_{0,p/q,\phi}^q = R_p$
for any $\phi$ and so we exclude $b=0$ from the definition
of pinching.
                        
The right and left boundaries of $T_{p/q}$ and $T_\alpha$ are
all graphs of Lipschitz functions with $\omega$ as a function of $b$.
The crucial observation for $C^r$-genericity 
is Herman's  (\cite{herman1}, III Proposition 3.2). 
\begin{lemma}[Herman]\label{herman}
If $\phi$ has an extension to a complex
entire function then it is not pinching for all $p/q$. 
This holds in particular when $\phi$ is a 
nonconstant trigonometric polynomial.
\end{lemma}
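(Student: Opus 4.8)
The plan is to argue by contradiction via the identity theorem, following Herman. Suppose $\phi$ is $p/q$-pinching, so that $\tf^q = R_p$ for some $b\in(0,1]$ and $\omega\in S^1$, where $\tf(x) = x + \omega + b\,\phi(x)$. Since $\phi$ extends to an entire function $\Phi\colon\mathbb{C}\raw\mathbb{C}$ (for a trigonometric polynomial $\Phi$ is literally the same finite exponential sum, which still satisfies $\Phi(z+1)=\Phi(z)$), the lift $\tf$ extends to the entire map $F(z) = z + \omega + b\,\Phi(z)$. Its $q$-fold composite $F^q$ is entire and coincides with $z\mapsto z+p$ on $\RR$; as $\RR$ has accumulation points in $\mathbb{C}$, the identity theorem forces $F^q(z)=z+p$ for every $z\in\mathbb{C}$.

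Next I would show that $F$ is a holomorphic automorphism of $\mathbb{C}$: it is injective because $F^q$ is, and surjective because $F^q$ is, with entire inverse $F^{q-1}\circ R_{-p}$. By the standard structure theorem for injective entire functions --- a consequence of Picard's great theorem, since a polynomial of degree $\geq 2$ is not injective and an essential singularity at $\infty$ destroys injectivity --- $F$ must be affine, $F(z) = az + c$ with $a\neq 0$. (As a sanity check one can also differentiate $F^q(z)=z+p$ by the chain rule to see that $F'$ is nowhere zero.) Comparing $F(z)=az+c$ with $F(z) = z + \omega + b\,\Phi(z)$ gives $b\,\Phi(z) = (a-1)z + (c-\omega)$, and since $\Phi$ is $1$-periodic the affine function on the right must be $1$-periodic too; hence $a=1$, and then $b\neq 0$ makes $\Phi$, so $\phi$, constant --- contradicting that $\phi$ is a nonconstant trigonometric polynomial. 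In the context of the paper a constant forcing yields a family of rigid rotations, which is not standard-like, so the general ``$\phi$ entire'' statement reduces to the nonconstant case.

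There is no serious obstacle here; the one substantive input is the classification of injective entire maps as affine. The only points that need a word of care are the constant-forcing case just noted and the verification that $\Phi$ inherits $1$-periodicity on all of $\mathbb{C}$ --- which is immediate from the identity theorem, or obvious for exponential sums.
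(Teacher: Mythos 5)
Your argument is correct, and it is essentially the standard (Herman-style) proof of this fact; note that the paper itself offers no proof here, simply citing Herman (\cite{herman1}, III, Proposition 3.2), so there is nothing internal to compare against. Your chain — extend $\tf$ to the entire map $F(z)=z+\omega+b\,\Phi(z)$, use the identity theorem to promote $\tf^q=R_p$ to $F^q(z)=z+p$ on all of $\mathbb{C}$, deduce that $F$ is an injective entire map and hence affine, and then use the (identity-theorem-inherited) $1$-periodicity of $\Phi$ to force $F(z)=z+c$ and thus $\Phi$ constant — is sound; injectivity alone already suffices for the affineness step, so the surjectivity remark is harmless extra. Your caveat about the constant case is also the right one to flag: as literally stated the lemma fails for constant $\phi$ (a rotation family pinches trivially whenever $\omega+bc=p/q$), but constant forcings are excluded throughout the paper since they are not standard-like, and the lemma is only ever applied to nonconstant trigonometric polynomials, exactly as you observe.
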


\subsection{Lipschitz functions}
Recall that if $\phi:\RR\raw\RR$ is Lipschitz, then $\phi'$ exists almost
everywhere, is in $L^1(\lambda)$, there is an $M$ with 
$ \esssup(|\phi'|)<M$,
 and for all $x, x_0$,
$$
\phi(x) =  \phi(x_0) + \int_{x_0}^x \phi' \; d\lambda.
$$
The usual norm on Lipschitz functions is 
$$
\normL{\phi} = \max|\phi(x)| + \sup_{y\not = x}|\frac{\phi(y) - \phi(x)}{y-x}|,
$$
but it is standard for a Lipschitz function $\phi$ that 
\begin{equation}\label{standard}
\inf_{y> x} \frac{\phi(y) - \phi(x)}{y-x} = \essinf \phi'(x)
\end{equation}
and so it is convenient to  use
$$
\normL{\phi} = \|\phi |_{C^0} + \|\phi'\|_\infty
$$ and so 
treat $\C^{Lip}(S^1)$ as the Sobolev space 
$ W^{1,\infty}(S^1)$. 

Given a function $W\in L^1(S^1)$ with $\esssup(|W|) < \infty$
and $\int W d\lambda = 0$ we will often
define a  function $G\in C^{Lip}(S^1)$ by
$$
G(x) = G(0) + \int_{x_0}^x W \; d\lambda.
$$
The Lebesgue Differentiation Theorem then yields $G' = W$ a.e.
In addition, if $\essinf W > -1$ then
$$
g(x) = x + g(x_0) +  \int_{x_0}^x W \; d\lambda
$$
is the lift of an invertible degree one circle
map with Lipschitz inverse $g^{-1}$. Letting $X$ be
the full measure set where $g'$ exists, since Lipschitz funstions 
are absolutely continuous, $g^{-j}(X)$ is also full measure for any $j$
 and thus for $q > 0$,
\begin{equation}\label{goodset}
X^{(q)} = X \cap g^{-1}(X) \dots \cap g^{-q+1}(X)
\end{equation}
is also full measure. Thus for $x\in X^{(q)}$ the chain rule holds, so
$$
(g^q)'(x) = g'( g^{q-1}(x))\; g'( g^{q-2}(x)) \dots g'(x).
$$
The set $X^{(q)}$ in \eqref{goodset} is called the \textit{good set}
for $g$.

\section{standard-like families}\label{stdlike}  As noted in the introduction
we consider families with the same structure as the standard
family. A map is strongly order preserving if $x_1 < x_2$ implies
$f(x_1) < f(x_2)$ while it is weakly order preserving if
 $x_1 < x_2$ implies $f(x_1)\leq f(x_2)$.
\begin{definition}
$\phi$ is called standard-like if
\begin{align*}
f_{b,\omega,\phi}\ \text{is strongly order preserving for}\ b<1,\\
f_{b,\omega,\phi}\ \text{is not weakly order preserving for}\ b>1.\\
\end{align*}
\end{definition}

When $b<1$ $f_{b,\omega,\phi}$ is a homeomorphism. When
$b=1$ it can be a homeomorpism (as in the standard family)
or noninjective and weakly order preserving (as in the
triangle wave example). The condition for $b>1$ says that
there are $x_1<x_2$ but $f(x_1)> f(x_2)$ which is stronger
than just being noninjective. Thus, when $b>1$,
 $\tf_{b,\omega,\phi}^q$ is never $R_p$ and so we exclude 
$b>1$ from any consideration of pinching.

The collection of standard-like $\phi\in\C^r(S^1)$ is denoted
$\mathcal{S}^r$ while those in $C^{Lip}(S^1)$ is denoted
$\cSL$.
In the smooth and Lipschitz case
the standard-like $\phi$ are easily characterized.
\begin{lemma}\label{char}
The smooth ($r>0$) standard-like $\phi$ are
\begin{equation*}
\mathcal{S}^r = \{ \phi\in C^r(S^1)\colon \min \phi' = -1\},
\end{equation*}
and the Lipschitz  standard-like $\phi$ are
\begin{equation*} 
\cSL = \{ \phi\in \Lip(S^1) \colon 
\essinf \phi' = -1\}
\end{equation*}
\end{lemma}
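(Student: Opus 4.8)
The plan is to prove both characterizations by translating the order-preserving conditions on the maps $f_{b,\omega,\phi}$ into pointwise (resp. essential) conditions on $\phi'$, using the fact that $f_{b,\omega,\phi}(x) = x + \omega + b\,\phi(x)$ has derivative $1 + b\,\phi'(x)$. The key elementary observation is that a $C^1$ (resp. Lipschitz) lift $f$ is weakly order preserving if and only if $f' \geq 0$ everywhere (resp. $\essinf f' \geq 0$), and strongly order preserving if and only if $f' \geq 0$ with the zero set containing no interval; but in fact for the one-parameter family in $b$ the relevant dichotomy is cleaner, as I explain below. Throughout I will use that $\omega$ is irrelevant to monotonicity since it only adds a constant.

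First I would treat the smooth case. Write $m = \min_{x} \phi'(x)$, which exists since $\phi'$ is continuous on the compact $S^1$. For $b < 1$: if $m \geq -1$ then $1 + b\,\phi'(x) \geq 1 - b > 0$ for all $x$, so $f_{b,\omega,\phi}$ is strongly order preserving; conversely if $m < -1$, then already for $b$ slightly less than $1$ (precisely $b \in (-1/m, 1)$, which is nonempty since $-1/m < 1$) we get $1 + b\,m < 0$, so $f_{b,\omega,\phi}'$ is negative somewhere and the map is not even weakly order preserving, contradicting the $b<1$ requirement. Hence standard-like forces $m \geq -1$. For $b > 1$: we need $f_{b,\omega,\phi}$ to fail weak order preservation, i.e. $1 + b\,\phi'(x) < 0$ for some $x$, i.e. $1 + b\,m < 0$ for some $b > 1$; since $b$ ranges over all of $(1,\infty)$ this holds for some such $b$ iff $m < 0$, and combined with $m \geq -1$ and the (routine) check that $m = 0$ is excluded because then $1 + b\phi' \geq 1 > 0$ for all $b$, we are forced to $m \in [-1, 0)$. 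The final step is to rule out $-1 < m < 0$: the definition demands the failure of weak order preservation for \emph{every} $b > 1$, and if $m > -1$ then choosing $b \in (1, -1/m)$ gives $1 + b\,\phi'(x) \geq 1 + b\,m > 0$ everywhere, so $f_{b,\omega,\phi}$ is strongly order preserving — contradiction. Therefore $m = -1$, and conversely $m = -1$ gives strong order preservation for $b<1$ (as above) and, for any $b>1$, $1 + b\,\phi'(x_0) = 1 - b < 0$ at a minimizer $x_0$, so weak order preservation fails. This establishes $\mathcal{S}^r = \{\phi \in C^r(S^1) : \min \phi' = -1\}$.

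The Lipschitz case runs along identical lines with $\essinf \phi'$ replacing $\min \phi'$ and with \eqref{standard} providing the bridge between the essential infimum and genuine monotonicity: by \eqref{standard}, $f_{b,\omega,\phi}$ is weakly order preserving iff $\inf_{y>x}\frac{f(y)-f(x)}{y-x} \geq 0$ iff $\essinf(1 + b\,\phi') \geq 0$ iff $\essinf \phi' \geq -1/b$. From this the same four cases ($b<1$ requires $\essinf\phi' \geq -1$; $b>1$ requires $\essinf\phi' < 0$; $\essinf\phi' = 0$ and $\essinf\phi' \in (-1,0)$ are both excluded by the ``for all $b>1$'' clause) force $\essinf\phi' = -1$. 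For the converse direction one checks that $\essinf\phi' = -1$ gives $\essinf(1 + b\phi') = 1 - b > 0$ when $b < 1$, hence strong order preservation (a Lipschitz map with essentially positive derivative is strictly increasing, being absolutely continuous), and $\essinf(1 + b\phi') = 1 - b < 0$ when $b>1$, which by \eqref{standard} again means $\inf_{y>x}\frac{f(y)-f(x)}{y-x} < 0$, so there exist $x < y$ with $f(x) > f(y)$, i.e. weak order preservation fails.

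I do not expect a serious obstacle here; the only point requiring care is the Lipschitz bridge between the \emph{pointwise} definition of (weak) order preservation and the \emph{essential} quantity $\essinf\phi'$, which is exactly what \eqref{standard} supplies, and the observation that the ``not weakly order preserving for all $b>1$'' clause is what pins $\essinf\phi'$ down to exactly $-1$ rather than merely to the interval $[-1,0)$. One should also note in passing (as the surrounding text does) that this analysis implicitly uses $\essinf\phi' > -\infty$, which holds automatically since $\phi$ is Lipschitz, so that $f_{b,\omega,\phi}$ is a genuine degree-one circle map for $b$ in the relevant range.
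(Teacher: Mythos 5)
Your proposal is correct and follows essentially the same route as the paper: both use \eqref{standard} to convert (weak/strong) order preservation of $f_{b,\omega,\phi}$ into conditions on $1+b\,\essinf\phi'$ (resp.\ $1+b\min\phi'$), and then pin $\essinf\phi'$ to $-1$ by choosing $b$ between $1$ and $-1/\essinf\phi'$ in the cases $\essinf\phi'<-1$ and $-1<\essinf\phi'<0$. The only cosmetic differences are that the paper argues the Lipschitz case first (deducing the smooth case) and rules out $\essinf\phi'\geq 0$ via periodicity of $\phi$, whereas you rule it out directly from the $b>1$ clause; both are fine.
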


\begin{proof}
We prove the Lipschitz case $\phi\in C^{Lip}(S^1)$;  
the smooth case then follows.
Now  $f_{b,\omega,\phi}$ is clearly Lipschitz and so 
$f_{b,\omega,\phi}^\prime = 1 + b\phi'$ exists a.e. 

Now assume that $\essinf \phi' = -1$ and so using \eqref{standard},
$\inf_{y\not = x}\frac{f_{b,\omega,\phi}(y) - f_{b,\omega,\phi}(x)}{y-x}=
1 - b$.
Thus if $b<1$, $f_{b,\omega,\phi}$ is a strongly order preserving,
and if $b>1$,  $f_{b,\omega,\phi}$ is not weakly order preserving. Thus 
$\phi\in \mathcal{S}^{Lip}$.

Next assume that $\essinf \phi' \not= -1$ and note
that $\essinf f_{b,\omega,\phi}' = 1 + b \essinf \phi'$.
If $\essinf \phi' < -1$, then pick $b$ so that $1 > b > -1/\essinf \phi'$
and so $b \essinf \phi < -1$ thus  
$\essinf f_{b,\omega,\phi}' < 0$ and so is not order preserving
for some $b<1$ and
$\phi\not\in \mathcal{S}^{Lip}$.

Now if $\phi$ is constant, clearly $\phi\not \in \mathcal{S}^r$ and
so since $\phi$ is periodic,   $\essinf \phi' < 0$.   
 If $0 > \essinf \phi'  > -1$  and pick $b$ 
with $1 < b < -1/\essinf \phi' $ so $b \essinf \phi' > -1$.
Thus   $\essinf f_{b,\omega,\phi}^\prime > 0$ and so $f_{b,\omega,\phi}$ is
strongly order preserving
 for a $b>1$ and so again $\phi\not\in \mathcal{S}^{Lip}$.
\end{proof}

\section{$C^r$ and Lipschitz genericity theorem}\label{main}
The proof of the  theorem below will be contingent on the density
results proved in the next two sections. The PL genericity
theorem needs somewhat different methods and is proved in
 Section~\ref{notlast}. 
\begin{theorem}
The collection of all 
$\phi\in\mathcal{S}^r$, $r>0$ which  are not pinching for all $p/q$ 
is dense, $G_\delta$ (generic) as are those
 $\phi\in\mathcal{S}^{Lip}$.
\end{theorem}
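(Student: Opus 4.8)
The plan is to realize the generic set as a countable intersection of open dense sets, one for each pair $p/q$, and reduce openness and density to statements about individual circle maps. First I would fix $p/q$ and, for each $n\geq 2$, consider the set $U_{p/q,n}$ of standard-like $\phi$ such that $\tf_{b,\omega,\phi}^q\neq R_p$ for all $\omega\in S^1$ and all $b\in[1/n,1]$. The set of $\phi$ that are not $p/q$-pinching is exactly $\bigcap_n U_{p/q,n}$ (since a pinch point has $b\in(0,1]$, hence $b\in[1/n,1]$ for some $n$), and the set of $\phi$ not pinching for \emph{any} $p/q$ is $\bigcap_{p/q}\bigcap_n U_{p/q,n}$, a countable intersection. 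So the theorem follows from the Baire property of $\mathcal{S}^r$ and $\cSL$ (each is closed in a complete space, hence Baire, as noted in the preliminaries) once I show each $U_{p/q,n}$ is open and dense.

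For openness: the condition $\tf_{b,\omega,\phi}^q = R_p$ is equivalent to saying the two boundary functions of $T_{p/q}$ coincide at height $b$, or more directly that $\max_x(\tf_{b,\omega,\phi}^q(x)-x-p)=0=\min_x(\tf_{b,\omega,\phi}^q(x)-x-p)$. I would argue that the function $(\phi,b)\mapsto \max_x \tf_{b,\omega_*(b),\phi}^q(x)-x-p$ (where $\omega_*$ tracks a point on the left boundary) is jointly continuous in the relevant norm on the compact parameter range $b\in[1/n,1]$, so its positivity is an open condition; the complement $U_{p/q,n}^c$, being the $\phi$ for which this quantity vanishes for \emph{some} $b$ in a compact set, is closed. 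Here I use that $C^0$-closeness of $\phi$ gives $C^0$-closeness of $\tf_{b,\omega,\phi}^q$ uniformly in $(b,\omega)$ over the compact range, which is immediate from the explicit form $x+\omega+b\phi(x)$ and an induction on $q$. Density is the substantive point and is where I would invoke the results promised for the next two sections: the $C^r$-density lemma (Section~\ref{smooth}) says that near any $\phi\in\mathcal{S}^r$ there is a $\phi'$ with $\tf_{b,\omega,\phi'}^q\neq R_p$ for all $\omega$ and all $b\in[1/n,1]$ — for the $C^r$ case this ultimately rests on Herman's Lemma~\ref{herman}, since trigonometric polynomials with $\min(\cdot)'=-1$ are dense in $\mathcal{S}^r$ and are not pinching; the Lipschitz density theorem (Section~\ref{lipschitz}) gives the analogous statement in $\cSL$, proved ``by hand'' since trigonometric polynomials are not dense there.

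One technical wrinkle I would address carefully: the density statements I am handed are of the form ``$\tf_{b,\omega,\phi}^q\neq R_p$ for all $\omega$ and all $b\in[1/n,1]$'', which is precisely membership in $U_{p/q,n}$, so the match is clean — but I should make sure the approximating $\phi$ stays standard-like, i.e. in $\mathcal{S}^r$ or $\cSL$, which is why Lemma~\ref{char}'s characterization ($\min\phi'=-1$, resp.\ $\essinf\phi'=-1$) matters: the perturbations used in the density arguments are engineered to preserve this normalization, and I would note that $U_{p/q,n}$ is to be understood as a subset of $\mathcal{S}^r$ (resp.\ $\cSL$) with the subspace topology, so openness and density are relative to that Baire space. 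The main obstacle is genuinely the density step, and it is deferred: for $C^r$ it is the content of the $C^r$-density lemma and is relatively short thanks to Herman; for Lipschitz it is the long argument of Section~\ref{lipschitz} involving approximating $\phi'$ by a simple function, perturbing the weights via the implicit function theorem and linear algebra, and integrating back up. Modulo those two inputs, the proof here is the routine Baire-category packaging together with the continuity estimate needed for openness.
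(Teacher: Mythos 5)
Your proposal is correct and follows essentially the same route as the paper: the same decomposition into the sets $A_{p/q,n}$ (your $U_{p/q,n}$), the Baire property of $\mathcal{S}^r$ and $\cSL$ as closed subsets of complete spaces, and density deferred to the smooth density lemma and the Lipschitz density theorem. Your openness argument via uniform $C^0$-closeness of $\tf^q_{b,\omega,\phi}$ over the compact range $(b,\omega)\in[1/n,1]\times S^1$ is the same in substance as the paper's subsequence-extraction proof that the complement is closed.
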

\begin{proof}
For the  $C^r$ case, first note that using Lemma~\ref{char}
it is easy to check that $\mathcal{S}^r$  is closed
in the complete metric space $C^r(S^1)$  and is thus a Baire space.

For a fixed $p/q$ and $n>1$ let
\begin{equation*}
A_{p/q, n} =
 \{ \phi\in\mathcal{S}^r\colon \tf_{b,\omega,\phi}^q \not= R_p \ \text{for all}
\ b\in [1/n, 1],\omega\}.
\end{equation*}
To show $A_{p/q,n}$ is open in $\mathcal{S}^r$ first note that it is 
nonempty 
since by Herman's Lemma~\ref{herman},
for any nonconstant trigonometric polynomial $P$,
$\frac{P}{|\min P'|} \in A_{p/q,n}$.

 The complement
of $A_{p/q,n}$  in $\mathcal{S}^r$ is 
\begin{equation*}
B_{p/q, n} = \{ \phi\in\mathcal{S}^r\colon \tf_{b,\omega,\phi}^q = R_p \ \text{for some}
\ b\in [1/n, 1],\omega\}.
\end{equation*}
So assume that $\phi_k\raw\phi_0\in\mathcal{S}^r$
and for each $k$, $\tf_{b_k,\omega_k,\phi_k}^q = R_p$. Passing to subsequences,
there are $b_0$ and $\omega_0$ and $b_{k_i}\raw b_0$ and
 $\omega_{k_i}\raw \omega_0$
and since $\phi_{k_i}\raw \phi_0$ in $C^0(S^1)$,  
$\tf_{b_0,\omega_0,\phi_0}^q = R_p$ and so 
$\phi_0\in B_{p/q,n}$ and $B_{p/q,n}$
is closed, and so $A_{p/q,n}$ is open.

Finally, by Lemma~\ref{smooth} below,  $A_{p/q,n}$ is also  
dense in $\mathcal{S}^r$ and thus  the collection of
 not pinching forcing
\begin{equation*}
A = \bigcap_{p/q,n} A_{p/q,n} =
\{ \phi\in\mathcal{S}^r\colon \tf_{b,\omega,\phi}^q \not= R_p \ \text{for all}
\ b>0,\omega, p/q\}
\end{equation*}
is dense $G_\delta$.

The proof for the Lipschitz case is almost identical, except
it finishes with Theorem~\ref{lipschitz}.
\end{proof}

\begin{remark}\label{czero}
 Note that no $C^0$ version of this theorem is
given. A proof like that of Lemma~\ref{char} shows that 
\begin{equation*}
\mathcal{S}^0 = \{ \phi\in C^0(S^1)\colon \inf_{y> x} 
\frac{\phi(y) - \phi(x)}{y-x}= -1\}.
\end{equation*}
In addition, trigonometric polynomials are dense in $C^0(S^1)$
and are not pinching by Lemma~\ref{herman} and so a proof like
that of Lemma~\ref{smooth} shows that not pinching
is dense in $\mathcal{S}^0$. The difficulty is
that $\mathcal{S}^0$ is neither open nor closed in $C^0(S^1)$
and so it is not clear that it is a Baire space.
\end{remark}

\section{The smooth density lemma}

\begin{lemma}\label{smooth}
For each $p/q, n>0$ the set 
\begin{equation*}
A_{p/q, n}^r =
 \{ \phi\in\mathcal{S}^r\colon \tf_{b,\omega,\phi}^q \not= R_p \ \text{for all}
\ b\in [1/n, 1],\omega\}.
\end{equation*}
is dense in $\mathcal{S}^r$.
\end{lemma}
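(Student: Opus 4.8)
The plan is to deduce the lemma directly from Herman's Lemma~\ref{herman} together with the density of trigonometric polynomials in $C^r(S^1)$. The key observation is that a single nonconstant trigonometric polynomial, once suitably normalized to lie in $\mathcal{S}^r$, is \emph{not} pinching for every $p/q$ simultaneously, and in particular satisfies $\tf^q_{b,\omega,\psi}\ne R_p$ for all $b\in(0,1]$, all $\omega$, and the given $p/q$ --- much more than is needed for membership in $A^r_{p/q,n}$. So it suffices to show that such normalized trigonometric polynomials are $C^r$-dense in $\mathcal{S}^r$, which is a routine approximation-plus-renormalization argument.

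Concretely, fix $\phi\in\mathcal{S}^r$ and $\varepsilon>0$. By Lemma~\ref{char} we have $\min\phi'=-1$, so $\phi$ is nonconstant. Choose a trigonometric polynomial $P$ with $\|P-\phi\|_{C^r}$ as small as we wish (Fej\'er summation); then $P$ is nonconstant, $\min P'<0$, and $|\min P'|$ is as close to $1$ as we wish. Put
\[
\psi \;=\; \frac{P}{|\min P'|},
\]
which is again a nonconstant trigonometric polynomial with $\min\psi' = \min P'/|\min P'| = -1$, hence $\psi\in\mathcal{S}^r$ by Lemma~\ref{char}. Since
\[
\|\psi-\phi\|_{C^r}\;\le\;\Bigl|\tfrac{1}{|\min P'|}-1\Bigr|\,\|P\|_{C^r}\;+\;\|P-\phi\|_{C^r}
\]
and $\|P\|_{C^r}$ stays bounded, the right-hand side is $<\varepsilon$ once $P$ was taken close enough to $\phi$ at the outset.

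It remains to apply Lemma~\ref{herman}: being a nonconstant trigonometric polynomial, $\psi$ extends to a complex entire function, so it is not $p/q$-pinching for any $p/q$; in particular $\psi\in A^r_{p/q,n}$ for the prescribed $p/q$ and $n$. As $\phi\in\mathcal{S}^r$ and $\varepsilon>0$ were arbitrary, $A^r_{p/q,n}$ is dense in $\mathcal{S}^r$. The only point requiring care --- and the step I would be most deliberate about --- is the normalization: one must rescale $P$ so that the approximant lands \emph{exactly} in $\mathcal{S}^r$ (not merely in $C^r(S^1)$) while keeping the $C^r$-estimate, which is why the rescaling error is absorbed into the initial choice of $P$. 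There is no genuine obstacle here: the substantive content is entirely carried by Herman's lemma, which is already in hand.
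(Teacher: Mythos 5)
Your proposal is correct and follows essentially the same route as the paper: approximate $\phi$ by a nonconstant trigonometric polynomial $P$, rescale by $|\min P'|$ to land exactly in $\mathcal{S}^r$ while controlling the $C^r$ error, and invoke Herman's Lemma~\ref{herman} to conclude the rescaled polynomial lies in $A^r_{p/q,n}$. The only difference is cosmetic bookkeeping in the error estimate (the paper fixes an explicit $\delta$ depending on $\|\phi\|_{C^r}$), so there is nothing to add.
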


\begin{proof}
It is classical that trigonometric polynomials are dense in $C^r(S^1)$ 
for all $0 \leq r \leq \infty$.  We then rescale them to get
elements in $\cSr$. They are not pinching by
Lemma~\ref{herman}. So the main task it to show 
 that the rescaling
can be done while maintaining their density.

Given $\phi\in\mathcal{S}^r$ and $\epsilon>0$, let $P$ be a nonconstant
 trigonometric polynomial with  
$$\| \phi-P\|_{C^r} < \delta:= \min(\frac{\epsilon}{4 \|\phi\|_{C^r}},
 \frac{\epsilon}{4})
$$ Letting $C = \min(P')$ then $C<0$ and
$$
|1 - |C|| = |C+1| = |\min P' - \min \phi'| < \delta.                     
$$
Thus since $|C| > 1/2$,
$$
\| \frac{P}{|C|} - P\|_{C^r} = |\frac{1}{|C|} -1|\|P\|_{C^r} 
\leq |\frac{1-|C|}{|C|}|(\|\phi\|_{C^r} +\delta) < 
2 \delta(\|\phi\|_{C^r} + \delta)
$$
and so
$$
\| \frac{P}{|C|} - \phi \|_{C^r}\leq 
\|\phi - P\|_{C^r} + \|P - P/|C|\|_{C^r} \leq 
\delta+  2 \delta(\|\phi\|_{C^r} + \delta)  < \epsilon
$$ and
$P/|C|$ is a trig polynomial in $\mathcal{S}^r$ and 
 it is in $A_{p/q,n}$.
\end{proof}

\section{the Lipschitz density theorem}

\begin{theorem}\label{lipschitz}
For each $p/q, n>1$ the set 
\begin{equation}\label{Adef}
A_{p/q, n}^{Lip} =
 \{ \phi\in\cSL\colon \tf_{b,\omega,\phi}^q \not= R_p \ \text{for all}
\ b\in [1/n, 1],\omega\}.
\end{equation}
is dense in $\cSL$.
\end{theorem}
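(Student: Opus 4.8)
The plan is to reduce the density statement to a finite-dimensional perturbation argument, following the outline sketched in the introduction. Given $\phi \in \cSL$ and $\epsilon > 0$, I would first approximate $\phi'$ in $L^\infty$ (equivalently, $\phi$ in the Lipschitz norm) by a simple function $\hphi' = \sum_{i=1}^N \hw_i I_{X_i}$, where $\{X_i\}$ is a fine partition of $S^1$ into intervals; integrating and adjusting the constant so $\int \hphi' = 0$ produces $\hphi \in C^{Lip}(S^1)$ with $\normL{\phi - \hphi}$ small, and by rescaling so that $\essinf \hphi' = -1$ (using Lemma~\ref{char}) we may assume $\hphi \in \cSL$. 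The essential infimum being $-1$ means one of the weights, say $\hw_1$, equals $-1$. The idea is then to perturb the \emph{other} weights $\hw_2,\dots,\hw_N$ — keeping $\hw_1 = -1$ fixed so standard-likeness is preserved — within an arbitrarily small ball, so as to destroy the identity $\tf_{b,\omega}^q = R_p$ for every $b \in [1/n,1]$ and every $\omega$ simultaneously.

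The second step is to encode the obstruction as a multiplier condition. If $\tf_{b,\omega,\psi}^q = R_p$ for some $(b,\omega)$, then $\tf_{b,\omega,\psi}$ is conjugate to the rigid rotation $R_{p/q}$, every orbit is periodic of period $q$, and in particular, on the good set $X^{(q)}$ from \eqref{goodset}, the chain-rule product of derivatives around each periodic orbit equals $1$: for the orbit of $x$,
$$
\prod_{j=0}^{q-1} \bigl( 1 + b\, \hphi'(\tf_{b,\omega}^j x) \bigr) = 1 .
$$
Since $\hphi'$ is a step function taking values in the finite set $\{\hw_i\}$, this product is of the form $\prod_i (1 + b\hw_i)^{m_i}$, where $m_i \ge 0$ is the number of visits the $q$-orbit makes to $X_i$ and $\sum m_i = q$. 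There are only finitely many admissible exponent vectors $\mathbf m = (m_1,\dots,m_N)$, one for each $(b,\omega)$-pinch combinatorial type, and we also know $m_1 \ge 1$ (the break point where $\hphi' = -1$ must lie on the periodic orbit, since a pinch forces the derivative to vanish somewhere in the non-injective case $b=1$, and more generally the argument from \cite{paul}/Theorem~\ref{another} puts all break points on periodic orbits). For the finitely many vectors $\mathbf m$ with $m_1 \ge 1$, the function $b \mapsto (1 - b)^{m_1} \prod_{i \ge 2}(1 + b\hw_i)^{m_i}$ must equal $1$ at the pinch value of $b$; by the implicit function theorem in the weight variables $(\hw_2,\dots,\hw_N)$, an arbitrarily small generic perturbation of the weights moves this finite set of algebraic curves off the value $1$ over the compact parameter range $b \in [1/n,1]$, because the relevant partial derivative $\partial/\partial \hw_i$ of the product is nonzero (each factor is positive for $b<1$, and one can always choose some $X_i$ actually visited by the orbit). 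Taking the finite intersection over all admissible $\mathbf m$ of the open dense sets of good weights, and then integrating the perturbed step function to obtain $\psi \in \cSL$, gives $\psi \in A^{Lip}_{p/q,n}$ with $\normL{\phi - \psi} < \epsilon$.

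The main obstacle is the passage from "no periodic-orbit multiplier equals $1$" back to "$\tf_{b,\omega}^q \ne R_p$", handled uniformly over all $\omega$ and all $b \in [1/n,1]$ at once. A single weight perturbation must kill \emph{every} $(b,\omega)$; the reason this is possible is precisely that the step-function structure collapses the continuum of $\omega$'s into finitely many combinatorial types $\mathbf m$, and $b$ ranges over a compact set bounded away from $0$ and (at the pinch) bounded away from $1$ unless the product has a genuine factor $(1-b)^{m_1}$ with $m_1 \ge 1$ forcing $b<1$ strictly. A secondary technical point is verifying that the perturbed weights still integrate to zero and still have essential infimum exactly $-1$; this is arranged by fixing $\hw_1 = -1$ and absorbing the zero-mean constraint into a single non-negative-weight coordinate, which only removes one degree of freedom and leaves ample room for the genericity argument as long as $N \ge 3$ — consistent with the $k > 2$ phenomenon highlighted in the introduction. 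I expect the bookkeeping of admissible exponent vectors and the uniform implicit-function estimate over $[1/n,1]$ to be where most of the work lies.
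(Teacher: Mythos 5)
Your first step (discretizing $\phi'$ into a step function, rescaling to stay in $\cSL$, and reducing a pinch to finitely many multiplier equations $\prod_i(1+b\hw_i)^{m_i}=1$ along good-set orbits) matches the paper's Lemma~\ref{one} and Lemma~\ref{pincheq}. The gap is in your key perturbation step: you claim that a small generic perturbation of $(\hw_2,\dots,\hw_N)$ ``moves this finite set of algebraic curves off the value $1$ over the compact parameter range $b\in[1/n,1]$,'' i.e.\ makes every admissible multiplier equation unsolvable there. This cannot be done. By Lemma~\ref{calculus}, a polynomial $p(y)=\prod_i(1+y w_i)^{m_i}$ has a root of $p(y)=1$ in $(0,1)$ exactly when $\sum_i m_i w_i>0$ and $\prod_i(1+w_i)^{m_i}<1$, and there the crossing is transverse ($p'<0$ at the root). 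Both conditions are open in the weights, so an arbitrarily small perturbation only moves the root slightly; it never eliminates it. The partial derivative $\partial/\partial \hw_i$ being nonzero tells you the root $b(\bw)$ varies smoothly with the weights (this is exactly the implicit-function setup), not that the solution set can be pushed out of $[1/n,1]$. So the set of weights for which ``no multiplier equals $1$'' is not dense; your proposed open-dense intersection is over sets that are typically empty.

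What can be destroyed generically is not the existence of roots but their \emph{coincidence}: the paper's Lemma~\ref{pincheq} shows that a pinch with $b\geq 1/n$ forces at least three \emph{distinct} plausible index sets whose polynomials all take the value $1$ at the same $b$ --- and this is where the hypothesis $N>3q$ (not your ``$N\geq 3$'') enters, since each index set uses at most $q$ of the $N$ positive-measure cells. The perturbation argument (Lemmas~\ref{lemma2} and~\ref{nonlin}) then separates the associated roots of all $(1/n)$-plausible polynomials while respecting the constraints $w_1=-1$ and $\bw\cdot\bm=0$; this requires the rank computation on the derivative matrix $A$ of the root map, including ruling out the degenerate pair of equations \eqref{noway1}, none of which has a counterpart in your sketch. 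A secondary point: your claim that every admissible exponent vector has $m_1\geq 1$ is unjustified (a good-set orbit need not visit $X_1$; Theorem~\ref{another} concerns break points, not arbitrary orbits), though it is also unnecessary --- the exclusion of $b=1$ follows as in the paper from $\lambda(X_1)>0$ forcing $(\tf^q)'=0$ on a set of positive measure. Also note the paper's cells $X_i$ are level sets of $\phi'$, not intervals; using an interval partition as you propose would require a separate argument that the Lipschitz-norm error can be made small.
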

The proof of Theorem~\ref{lipschitz} requires several
lemmas and is spread over several subsections.
Throughout the given $\phi\in\cSL$ is fixed as
are $p/q$ and $n$.

\subsection{ Discretization}
The first step includes  a variant on the standard proof that simple functions
are dense in $L^\infty$ applied to the derivatives of the Lipschitz
function. For an interval $J$, $|J|$ is
its length or equivalently, it Lebesgue measure.
\begin{lemma}\label{one}
Given  $\epsilon > 0$, $\phi\in\cSL$, and $p/q$ there exists a 
 finite measurable partition $\cP = \{X_1, \dots, X_N\}$ of $S^1$ with
$N> 3q$ and each $\lambda(X_j)>0$  and weights
 $\hbw = (\hw_1, \dots, \hw_N)$ so that
$$
\hpsi(x) := \phi(0) + \int_0^x \sum \hw_j I_{X_j}\; d\lambda
$$
is in $\cSL$ and $\normL{\phi - \hpsi} < \epsilon$.
\end{lemma}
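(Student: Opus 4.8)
The plan is to approximate $\phi$ first in $C^0$ by a piecewise-linear function (so that its derivative is a step function), and then repair the failure of the standard-like condition $\essinf\psi'=-1$ by a small adjustment of the weights. Concretely, since $\phi'\in L^\infty(S^1)$ with $\essinf\phi'=-1$, the usual proof that simple functions are dense in $L^\infty$ (applied on a slightly finer scale) yields a finite partition of $S^1$ into intervals and a step function $W=\sum_j w_j I_{X_j}$ with $\int W\,d\lambda=0$ and $\|\phi'-W\|_\infty<\epsilon/2$; integrating $W$ from $0$ with the initial value $\phi(0)$ produces a Lipschitz $\psi_0$ with $\psi_0(0)=\phi(0)$, hence $\|\phi-\psi_0\|_{C^0}\le\epsilon/2$ by the integral representation, so $\normL{\phi-\psi_0}<\epsilon$. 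I would take the partition to consist of open intervals of positive length, refining as needed so that $N>3q$; this costs nothing since splitting an interval $X_j$ into two and keeping the same weight on both pieces does not change $\psi_0$.

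The one subtlety is that $\essinf W$ need not equal exactly $-1$: it equals $\min_j w_j$, which is some value in $(-1-\epsilon/2,\,-1+\epsilon/2)$. To force the standard-like condition I would post-process the weights. First discard (or merge) any interval whose weight is strictly less than $-1$ — or, cleaner, if $\min_j w_j=-1-s$ with $0\le s<\epsilon/2$, add $s$ to every weight and simultaneously modify one weight to restore $\int=0$; after this all weights are $\ge-1$ and at least one attains a value $\le -1$. More precisely, one arranges that $\min_j w_j=-1$ exactly: pick an index $j_0$ achieving the minimum, set $\hw_{j_0}=-1$, and redistribute the discrepancy $\hw_{j_0}-w_{j_0}$ (which is $O(\epsilon)$) among the other weights proportionally to $\lambda(X_j)$ so that $\sum_j\hw_j\lambda(X_j)=0$ still holds. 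Because $N>3q$ and all $\lambda(X_j)>0$, this redistribution changes each weight by at most $O(\epsilon)$, so the resulting $\hpsi$, defined by $\hpsi(x)=\phi(0)+\int_0^x\sum_j\hw_j I_{X_j}\,d\lambda$, satisfies $\essinf\hpsi'=\min_j\hw_j=-1$; by Lemma~\ref{char} (the Lipschitz case) $\hpsi\in\cSL$. Tracking the two $O(\epsilon)$ adjustments and shrinking the initial approximation accordingly gives $\normL{\phi-\hpsi}<\epsilon$.

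The routine points are the density of step functions in $L^\infty$ and the bookkeeping of the norm estimates; the only thing requiring care is the order of operations in the weight adjustment, namely that one must restore both $\essinf\hpsi'=-1$ and the zero-mean condition $\sum_j\hw_j\lambda(X_j)=0$ (needed for $\hpsi$ to be periodic, i.e.\ a well-defined element of $C^{Lip}(S^1)$) simultaneously, without reintroducing a weight below $-1$. Since we have $N>3q\ge 6$ intervals of positive measure to spread the correction over, there is ample room; the constraint $N>3q$ itself is harmless here and is recorded only because it is used in the later perturbation step of Theorem~\ref{lipschitz}.
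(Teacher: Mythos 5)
There is a genuine gap in your first step. You assert that the $L^\infty$-density of simple functions gives a step function $W=\sum_j w_j I_{X_j}$ with $\|\phi'-W\|_\infty<\epsilon/2$ \emph{where the $X_j$ are intervals of the circle} (equivalently, that $\phi$ can be approximated in the Lipschitz norm by a piecewise-linear function). That is false in general: the standard density argument partitions the \emph{range} of $\phi'$, and the resulting sets $X_j=\{x:\phi'(x)\in K_j\}$ are merely measurable, not intervals. If you insist on intervals, the approximation can fail outright: take $A\subset S^1$ measurable with $0<\lambda(A\cap I)<\lambda(I)$ for every interval $I$ (a countable union of fat Cantor sets placed densely) and let $\phi'$ take the value $-1$ on $A$ and a positive value on $A^c$, adjusted to have mean zero and rescaled to lie in $\cSL$; then for every interval $I$ and every constant $c$ one has $\esssup_I|\phi'-c|$ bounded below by half the gap between the two values, so no function whose derivative is constant on intervals can be $\epsilon$-close in $\normL{\cdot}$ once $\epsilon$ is small. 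This is exactly why the paper's Lipschitz argument is done ``by hand'' and why the lemma is stated for a finite \emph{measurable} partition: the paper takes $K_1=[-1,b_1),\dots,K_N$ a fine partition of $[\essinf\phi',\esssup\phi']$, sets $X_i=\{\phi'\in K_i\}$ (merging so each has positive measure, refining so $N>3q$; splitting a positive-measure set keeps the weight and the function unchanged), and takes $w_i$ to be the average of $\phi'$ on $X_i$. With that one correction your argument goes through, since nothing in your later steps actually uses that the $X_j$ are intervals.

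Your second half --- restoring $\essinf\hpsi'=-1$ by setting the minimal weight to $-1$ and redistributing the $O(\epsilon)$ discrepancy over the remaining weights so that $\sum_j\hw_j\lambda(X_j)=0$ --- is a legitimate alternative to the paper's normalization, which instead rescales $\hpsi=\psi/|w_1|$ (as in the smooth density lemma); note that with the level-set construction all weights are automatically $\geq-1$, so your ``weights below $-1$'' case disappears, and the size of your redistribution constant is controlled because $\lambda(X_{j_0})$ is bounded away from $1$ in terms of $\esssup\phi'$. So the only substantive defect is the interval/PL claim; replace it by the level-set partition and the proof is correct.
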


\begin{proof}
By the characterization of $\cSL$,  $ \essinf \phi' = -1$ and 
$C:= \esssup \phi' < \infty$. Find half open intervals 
$K_i = [a_i, b_i)$
for $i=1, \dots N-1$ with $K_1  = [-1,  b_1)$
 and a closed interval $K_N = [a_N, C]$ so that
\begin{enumerate}
\item If $X_i  = \{ x\in S^1: \phi'(x)\in  K_i\}$ then
$\lambda(X_i) > 0$ and $\sum \lambda(X_i) = 1$.
\item $|K_i| < \frac{\epsilon}{2}$.
\item $N>3q$.
\end{enumerate}

Let $m_i = \lambda(X_i)$ and for $i = 1, \dots, N$ let 
$$
w_i = \frac{1}{m_i}\int_{X_i} \phi'\;  d\lambda
$$
and  $W = \sum w_i  I_{X_i}$. Thus 
$$
\psi(x) = \phi(0)   + \int_0^x W \; d\lambda.
$$
has $\hpsi' = W$ a.e. By construction, $\sum m_i w_i = 0$
and so $\psi$ is periodic. Finally, we show that 
\begin{equation*}
\normL{\phi - \psi} \leq \epsilon.
\end{equation*}
 Since
$w_i\in K_i$, $\|\phi'-\psi'\|_\infty < \frac{\epsilon}{2}$.
And for the  $C^0$ part of the Lipschitz norm
\begin{align*}
& \sup|\phi(x) - \hpsi(x)|  \\
&= \sup|\phi(0) + \int_0^x \phi' \; d\lambda -
 (\phi(0) + \int_0^x W \; d\lambda)| \\
&\leq \sup\int_0^x |\phi' - W| \; d\lambda\\
&\leq  \int_0^1 |\phi' - W| \; d\lambda \leq 
\frac{\epsilon}{2}
\end{align*}

The last step in getting a $\hpsi\in\cSL$ is to rescale $\psi$ so that
$\essinf \hpsi' = -1$. 
Let $C= |w_1|$ and $\hpsi = \psi/C$  so $\hpsi$ has
weights $\hw_i = w_i/C$. By an argument just like 
Lemma~\ref{smooth} by choosing $\normL{\phi - \psi}$ small enough we
can make  $\normL{\phi - \hpsi}$ arbitrarily small.
Since $\hpsi = \sum \frac{w_j}{C} I_{X_j}$ and  
$ \sum (w_i/C m_i) = 0$,  $\hpsi$ is periodic and
$\hpsi\in\cSL$.
\end{proof}

Now given $\hpsi$ we generate a family 
$$
f_{b,\omega, \hpsi}(x)  = x + \omega + b\, \hpsi(x).
$$
If $\hpsi$ is not pinching, we have proved Theorem~\ref{lipschitz}.
 If it is pinching, we fix the partition $\cP$
 from Lemma~\ref{one} and perturb the vector of weights $\bw$
to obtain a  new $\psi$ that is not pinching.
 
Given a vector of masses $\bm$ there are two conditions required
 for a vector of weights $\bw$
to generate a $\psi_\bw\in\cSL$:
\begin{equation}\label{cond}
\sum m_j w_j = 0 \ \text{and}\ w_1 = -1.
\end{equation}
The first guarantees that the resulting $\psi_\bw$ 
is periodic and the  second condition gives  $\psi_\bw\in\cSL$. 
If these two conditions are satisfied, define
$$
\psi_\bw(x) := \phi(0) + \int_0^x \sum w_j I_{X_j}\; d\lambda.
$$
Next we must  check that a small perturbation of the
weights yields a small perturbation of the resulting $\psi$.
Given two weight vectors $\bu$ and $\bw$ first note that  
$$
\|\psi'_{\bu} - \psi_{\bw}'\|_\infty =
\|\sum (u_i-w_i) I_{X_i}\|_\infty =
 \|\bu -\bw\|_\infty
$$
To check the $C^0$ part of the Lipschitz norm
\begin{align*}
 \sup|\phi(0) + \int_0^x \sum u_i I_{X_i} \; d\lambda 
& - (\phi(0) + \int_0^x \sum w_i I_{X_i})| \; d\lambda \\
&\leq \sup\int_0^x |\sum(u_i-w_i)| I_{X_i} \; d\lambda\\
&\leq  \int_0^1 \sum |u_i-w_i| I_{X_i} \; d\lambda \\
&\leq 
\| \bu - \bw\|_\infty.
\end{align*}
Thus
\begin{equation}\label{twolip}
\normL{\psi_{\bu} - \psi_{\bw}} \leq 2  \|\bu - \bw\|_\infty
\end{equation}
which is to say that the map
$\bw\mapsto \psi_\bw$ is $2$-Lipschitz with these norms.
Thus we adopt the strategy of perturbing  $\hbw$
to a not pinching $\bw$ satisfying \ref{cond}.

\subsection{plausible polynomials}\label{plaus1}
Understanding the dynamics of the $f_{b,\omega, \psi_\bw}$ directly
is difficult, but the simple form of $f_{b,\omega, \psi_\bw}$ makes
the derivatives along orbits more tractable. Thus 
we first explore the consequences of $(\tf_{b,\omega, \psi_\bw})^q = R_p$
for the derivatives and see that 
$b$ and $\bw$ must solve a system of equations. We then 
find a perturbation $\bw$ for which no such equations can be solved.

Let $\Lambda_N = \{1, \dots, N\}^q/S_N$  where the symmetric group
$S_N$ acts in the standard way on indices. Thus $J\in\Lambda_N$ can
be described by the elements $j$ present  and their multiplicity $e_j$.
 Given $\bw\in \RR^N$, $b\in\RR$ and  $J\in\Lambda_N$ define 
\begin{equation}\label{formA}
G(b, \bw, J) = \prod_{j\in J} (1 + b w_j).
\end{equation}
The definition of $\Lambda_N$ makes this well-defined.
\begin{definition} Given a vector of weights $\bw$, an index set 
 $J$ is called  $(1/n)$-plausible if the equation 
$
G(b,\bw, J) = 1
$
has a solution $b\in  [1/n, 1]$. The associated polynomial
$p(y) = \prod_{j\in J} (1 + y w_j)$ is also called 
$(1/n)$-plausible.
\end{definition}
We will need a lemma from Calculus which determines which
polynomials of the form \eqref{formA} are $(1/n)$-plausible for
some $n$.
\begin{lemma}\label{calculus}
Let 
$$
p(x) = \prod_{i=1}^m (1 + y k_i)
$$
with $-1 \leq k_1 \leq \dots \leq k_m$,  and $k_1 < 0 < k_m$. 
Then $p(x) = 1$ has a solution in $(0,1)$  if and only
if $\sum k_i > 0$ and $\prod1 + k_i < 1$. Further, if there
is a solution $y^*$ in $(0,1)$, it is unique and $p'(y^*) < 0$.
\end{lemma}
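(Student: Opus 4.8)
The plan is to study $p(y) = \prod_{i=1}^m (1 + y k_i)$ as a function on the interval $(0,1)$ by looking at $\log p$, which is legitimate since each factor $1 + yk_i$ stays positive on $[0,1]$: indeed $k_1 \geq -1$ gives $1 + yk_1 \geq 1 - y > 0$ for $y \in [0,1)$, and all other $k_i > k_1$ so all factors are positive on $[0,1)$ (and at $y=1$ the first factor is $1 + k_1 \geq 0$, which is all we need). Set $L(y) = \log p(y) = \sum_{i=1}^m \log(1 + y k_i)$ on $[0,1)$. Then $L(0) = 0$, $L'(y) = \sum_i \frac{k_i}{1 + y k_i}$ so $L'(0) = \sum_i k_i$, and $L''(y) = -\sum_i \frac{k_i^2}{(1+yk_i)^2} < 0$ strictly (since $k_1 < 0 < k_m$ forces at least one nonzero $k_i$). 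So $L$ is strictly concave on $[0,1)$ with $L(0)=0$.

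From strict concavity the whole statement falls out. First, $p(y) = 1$ on $(0,1)$ is equivalent to $L(y) = 0$ on $(0,1)$. A strictly concave function vanishing at $0$ has at most one further zero on $(0,1)$, giving uniqueness of $y^*$. For existence: if $\sum k_i = L'(0) > 0$, then $L$ starts increasing, and by concavity it is strictly increasing until its maximum and strictly decreasing afterward; since $L$ is continuous on $[0,1]$ (the endpoint factor $1+k_1$ may be $0$, in which case $L(y)\to -\infty$ as $y\to 1^-$, which only helps), the condition $\prod_i(1+k_i) < 1$, i.e.\ $\lim_{y\to 1^-} L(y) < 0$ (interpreting $-\infty < 0$), together with $L$ attaining positive values just past $0$, forces a zero $y^* \in (0,1)$ by the intermediate value theorem. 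At that zero $L$ has already passed its maximum (since $L(y^*) = 0 < \max L$), so $L'(y^*) < 0$, hence $p'(y^*) = p(y^*) L'(y^*) = L'(y^*) < 0$. Conversely, if $\sum k_i \leq 0$ then $L'(0) \leq 0$ and by concavity $L$ is nonincreasing, in fact strictly decreasing on $(0,1)$ (as $L'' < 0$ makes $L'$ strictly decreasing, so $L' < 0$ on $(0,1)$ once $L'(0) \leq 0$), so $L < 0$ on $(0,1)$ and there is no solution; and if $\prod_i(1+k_i) \geq 1$, i.e.\ $L(1^-) \geq 0$, then since $L$ is concave with $L(0) = 0$ and $L(1^-) \geq 0$ it satisfies $L \geq 0$ on $[0,1]$ with equality only possibly at the endpoints, so again no zero in the open interval $(0,1)$.

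I expect no serious obstacle here; the only point requiring a little care is the boundary behavior at $y = 1$ when $k_1 = -1$ exactly, where $p(1) = 0$ and $\log p$ blows down to $-\infty$. The clean way to handle this uniformly is to phrase the ``$\prod_i(1+k_i) < 1$'' hypothesis as ``$L(y) \to \ell$ as $y \to 1^-$ with $\ell < 0$'', allowing $\ell = -\infty$, and note the IVT argument is unaffected. Everything else is the standard calculus of a strictly concave function pinned at the origin, so the proof is short.
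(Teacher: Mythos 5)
Your proof is correct. It takes a different route from the paper's: the paper works with the polynomial $p$ itself, using that all of its roots are real, locating the adjacent roots $x_L=-1/\max k_i<0$ and $x_R=-1/\min k_i\geq 1$ that bracket $[0,1]$, and invoking the Rolle-type fact that between adjacent real roots there is exactly one critical point --- necessarily a maximum here since $p(0)=1>0$ --- so that $p$ is unimodal on $[0,1]$ and the conclusions follow from the values $p(0)=1$, $p(1)=\prod(1+k_i)$, $p'(0)=\sum k_i$. You instead pass to $L=\log p$ and use strict concavity of $L$ on the interval where all factors are positive, which yields the same unimodality plus the at-most-two-zeros count in one stroke. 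The two arguments are close in spirit (log-concavity is exactly what makes the paper's critical-point count work out), but yours is self-contained calculus that sidesteps the real-rootedness/critical-point-between-roots lemma, handles repeated factors and the endpoint degeneracy $k_1=-1$ (where $p(1)=0$) uniformly, and gives the strict inequality $p'(y^*)<0$ cleanly via $p'=pL'$ and the strict monotonicity of $L'$; the paper's version stays entirely in the polynomial setting, which matches the root-based bookkeeping it uses elsewhere. The only points to tighten in your write-up are small: ``has already passed its maximum, so $L'(y^*)<0$'' should be justified by noting $L>0$ on $(0,y_c]$ (so $y^*>y_c$) and that $L'$ is strictly decreasing, and the $\prod(1+k_i)\geq 1$ case should explicitly invoke strict concavity to rule out interior zeros when $L(1)=0$; both are one-line fixes.
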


\begin{proof}
Note first that $p(0) = 1$, $p(1) = \prod 1+k_i$, and $p'(0) = \sum k_i$.
Further, since all of $p$'s roots (i.e. solutions to $p(y) = 0$)
are real, between each adjacent
pair of roots (which may have multiplicity) there is a unique
critical point which is either the maximum or minimum on
the interval between the roots.
There are a pair of adjacent roots  for $p$,
$x_L = -1/\max k_i<0$ and 
$x_R = -1/\min k_i \geq 1$ and so there is a critical point $x_c\in [x_L, x_R]$.
Since $p(0) = 1 > 0$ it is a maximum.

Assume first that $p'(0) = \sum k_i > 0$ and $ p(1) = \prod 1 + k_i < 1$.
Since now $p(1) < p(0)$, the maximum $x_c\in (0,1)$. There then
exists a unique $y^*\in (0,1)$ with $p(y) = 1$, and since $p'(0) > 0$,
$p'(y) < 0$. 

For the converse, say $p'(0) = \sum k_i \leq 0$. Since there
at most  one critical point in $(0,1]$ and it is a maximum, 
$p$ is decreasing from $p(0) = 1$ on that interval and thus
$p(x) < 1$  in $(0,1]$ 
Now say $p(1) = \prod (1 + k_i) > 1 = p(0)$, then again since
there is at most one critical point, a maximum, in $(0,1]$,
$p(x) > 1$ in $(0,1]$.
\end{proof}
In light of the lemma, if $p(y)$ is $(1/n)$-plausible,
the unique $y\in [1/n, 1]$ with $p(y) = 1$ is
called the \textit{associated solution}.

The next lemma connects $(1/n)$-plausible polynomials
 to  derivative at pinch points.
\begin{lemma}\label{pincheq}
Assume $(b,\omega, \psi_\bw)$ is a $p/q$-pinch point with $b\geq 1/n$,
then there are least three different $(1/n)$-plausable index sets for  $\bw$
such that all their associated polynomials
$p$ have the same solution to $p(y) = 1$, namely,  $y=b$.
\end{lemma}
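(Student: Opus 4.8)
The plan is to extract a multiplicative constraint from the pinch condition by differentiating along orbits, and then to produce many distinct index sets by pigeonholing against the inequality $N>3q$. Write $f=f_{b,\omega,\psi_\bw}$. Since $\psi_\bw\in\cSL$ and $b\le 1$, $f$ is strictly increasing, hence a circle homeomorphism, and the pinch hypothesis reads $\tf^q=R_p$; since $\tf^{-1}=\tf^{q-1}\circ R_{-p}$ is Lipschitz, $f$ is bi-Lipschitz and the good set $X^{(q)}$ of Section~\ref{lipschitz} has full measure. Let $V=X^{(q)}\cap\bigcap_{i=0}^{q-1}f^{-i}\bigl(\bigcup_j X_j\bigr)$; this is full measure, so $\lambda(V\cap X_j)>0$ for each $j$. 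For $x\in V$ define its itinerary by $f^i(x)\in X_{j_i(x)}$ ($0\le i\le q-1$) and let $J(x)\in\Lambda_N$ be the multiset $\{j_0(x),\dots,j_{q-1}(x)\}$. Because $\tf^q=R_p$, the chain rule gives
\begin{equation*}
1=(\tf^q)'(x)=\prod_{i=0}^{q-1}f'(f^i(x))=\prod_{i=0}^{q-1}\bigl(1+b\,w_{j_i(x)}\bigr)=G(b,\bw,J(x)),
\end{equation*}
so every realized $J(x)$ is $(1/n)$-plausible, with $b\in[1/n,1]$ a solution of $G(b,\bw,\,\cdot\,)=1$.

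Next I would check that at least three distinct index sets occur among $\{J(x):x\in V\}$. If only $J_1,J_2$ did, then $J_1\cup J_2$ would use at most $2q<N$ of the indices $1,\dots,N$; picking $j_0$ outside $J_1\cup J_2$ and any $y\in V\cap X_{j_0}$ (nonempty, having positive measure) yields $j_0\in J(y)$, so $J(y)\notin\{J_1,J_2\}$, a contradiction.

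It remains to identify $b$ as the relevant root. For a realized $J$, set $p_J(y)=\prod_{j\in J}(1+w_jy)$. From $\psi_\bw\in\cSL$ every $w_j\ge-1$ and $w_1=-1$, while $\sum m_jw_j=0$ with all $m_j>0$ and $\psi_\bw$ nonconstant forces some $w_j>0$. If $J$ had no negative weight, then $p_J(b)\ge1$ with equality only if every $w_j$, $j\in J$, vanishes, and symmetrically with no positive weight. Since $p_J(b)=1$, either $p_J\equiv1$, and then $y=b$ solves $p_J(y)=1$ trivially, or $J$ contains both a positive and a negative weight; in the latter case Lemma~\ref{calculus} applies, the root of $p_J$ in $(0,1)$ is unique, and hence the associated solution is $b$. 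Together with the previous paragraph this produces the three index sets.

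The substance here is bookkeeping, not ideas. First, one must legitimize the itinerary and the chain rule off a null set, which is why one works inside $V$ rather than on all of $S^1$. Second, if the three index sets are to be genuinely non-degenerate --- so that Lemma~\ref{calculus} truly pins $b$ down uniquely, as the later perturbation step needs --- then the pigeonhole count in the second paragraph should instead be run over only the indices $j$ with $w_j\ne0$: each such $j$ still lies in some realized $J(y)$ (namely for $y\in V\cap X_j$), and that $J(y)$ is then non-degenerate. This variant goes through provided at most $q$ of the weights vanish, which is the case whenever $\bw$ is a small perturbation of the discretization $\hbw$ of Lemma~\ref{one}, since the level-set construction there leaves at most one weight equal to $0$; this is where the slack $N>3q$, rather than merely $N>2q$, gets used.
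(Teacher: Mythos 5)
Your proof is correct and follows essentially the same route as the paper's: differentiate $\tf^q=R_p$ along orbits on a full-measure good set to get $\prod_i(1+bw_{j_i})=1$, then use $\lambda(X_j)>0$ together with $N>3q$ (you only need $N>2q$) to pigeonhole at least three distinct realized index sets, all solved by $y=b$. Your additional care about degenerate all-zero-weight index sets and about $b$ being the unique root from Lemma~\ref{calculus} is a refinement of the same argument (and a reasonable one, since the later perturbation step uses uniqueness), not a different approach.
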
 
\begin{proof}
Assume $x$ is in the good set for a $f_{b,\omega, \psi_{\bw}}$ 
as defined in \eqref{goodset}.
Since $f_{b,\omega, \psi_{\bw}}'(x) = 1 + b \psi_{\bw}'(x) = 1 + b w_i$
 when $x\in X_i$,   it follows that 
\begin{equation}\label{save}
1 = (f_{b,\omega, \psi_{\bw}}^q)'(x) = 
(1 + b w_{i_0}) \dots  (1 + b w_{i_{q-1}})
\end{equation}
where   $f_{b,\omega, \psi_{\bw}}^j(x) \in X_{i_j}$.
Since $\lambda(X_i)>0$ for all $i$ and $N>3 q$, as we vary $x$  there
must be at least $3$ such distinct equations all solved
by the same $b$ and each $w_i$ must occur
in at least one of them. .
\end{proof}

We allow the solution $b=1$ in the definition of a $(1/n)$-plausible
index set, but it will never occur in a equation like \eqref{save}
for a $\tf_{b,\omega, \psi_{\bw}}^q = R_p$ with $\psi_{\bw} \in
\mathcal{S}^{Lip}$. The  condition $\psi_{\bw} \in
\mathcal{S}^{Lip}$ forces $w_1 = -1$ (recall
\eqref{cond}) which means that $\psi_{\bw}' = -1$ on
 $X_1$ and thus $f_{b,\omega, \psi_{\bw}}'
= 0$ on the positive measure set
$X_1$ and so $\tf_{b,\omega, \psi_{\bw}}^q \not= R_p$.

\subsection{The plausible function}
Continuing under the assumption that $(\hb,\homega, \psi_\hbw)$
is a $p/q$-pinch point
 for some
$\hb$ and $\homega$, Lemma~\ref{pincheq}
 implies that for  $n>1/b$ 
the set of $(1/n)$-plausible
index sets $J$ for $\hbw$ is nonempty.
Let $J_i$ for $i = 1, \dots, M$ be the list of $(1/n)$-plausible index sets
for $\hbw$. For each $i$ define $F_i:\RR \times \RR^N \raw \RR$ as 
$
F_i(b, \bw) = G(\bw, b, J_i)
$
We treat each of these $b$'s as a separate variable $b_i$ and  let
$\bb = (b_i) \in \RR^M$  and collect the functions
together into $F = (F_i)$ and so 
$F:\RR^M \times \RR^N\raw \RR^M$. 
Letting $\One\in\RR^M$ be 
$\One = (1, 1, \dots, 1)$,  since we have restricted to $(1/n)$-plausible
$J$ using Lemma~\ref{calculus} we have that the equation
$
F(\bb, \hbw) = \One
$
has a unique solution with all $b_i\in[1/n, 1]$. 
 We call this solution $\hbb$ and so 
$
F(\hbb, \hbw) = \One.
$

The goal is to perturb $\bw$ and see its effect on $\bb$.  
We use the implicit function theorem to show that for $(\bb,\bw)$
near $(\hbb, \hbw)$ the set of the solutions to $F(\bb, \bw) = \One$
can be described as $F(g(\bw), \bw) = \One$ with $g$ smooth. Thus
we can perturb $\bb$ in a controlled manner by perturbing $\bw$.

Using Lemma~\ref{calculus}, 
$$
\alpha_i := \frac{\partial F_i}{\partial b_i}(\hbb, \hbw) < 0 \ \text{and} \
\frac{\partial F_i}{\partial b_j}(\hbb, \hbw) = 0  \ \text{when} \ j\not = i
$$
and so 
$\frac{\partial F}{\partial b} (\hbb, \hbw) = 
\diag( \alpha_1,  \dots, \alpha_M)$ which is invertible.
Thus there exists $U$ an open neighborhood of $\hbw$  and a 
smooth function $g:U\raw \RR^n$ so that for $\bw\in U$,
$F(\bw, \bb) = \One$ if and only if $\bb = g(\bw)$. Again
by the Implicit Function theorem
\begin{equation*}
A := \frac{\partial g}{\partial \bw}(\hbb, \hbw) = 
-\left(\frac{\partial F}{\partial \bb} (\hbb, \hbw)\right)^{-1}
\frac{\partial F}{\partial \bw} (\hbb, \hbw)
\end{equation*}

To compute the derivative of $F$ with respect to $\bw$
we use an alternative form of the function. 
 Recall that $J\in\Lambda_N$ can
be described by the elements $j$ present  and their multiplicity $e_j$. For
a given $J_i$, let $\overline{J_i}$ be the collection of distinct
elements $w_j$ in $J_i$ and $e_{i,j}$ be their multiplicity. So 
if we have representative $J_i = (2, 2, 3, 4, 4, 4)$,
then $\overline{J}_i = (2, 3, 4)$ with multiplicities
$e_{i,1} = 2$, $e_{i, 2} = 1$, and $e_{i,3} = 3$.
Thus we are writing 
$$
F_i(\bb, \bw) = \prod_{j\in \overline{J}_i} (1 + b_i w_j)^{e_{i,j}}
$$
where the  $w_j$ term occurs just once in the product.

Now  $F_i(\hb_i, \hbw) = 1$  and 
when $j\in J_i$,
$$
\frac{\partial F_i}{\partial w_j}(\hbb, \hbw) 
= \hb_i e_{i,j} (1 + \hb_i \hw_j)^{e_{i,j}-1}
 \prod_{k\in \overline{J}_i, k\not= j} (1 + \hb_i \hw_k)^{e_{i,k}}
=\frac{\hb_i e_{i,j}}{1 + \hb_i \hw_j}
$$
and this is zero when $j\not\in J_i$.

Thus, $A_{i,j} = 0$ when $j\not\in J_i$ and when $j\in J_i$
\begin{equation}\label{diff}
A_{i,j} = \frac{- \hb_i e_{i,j}}{\alpha_i(1 + \hb_i \hw_j)}
\end{equation}

While it is admittedly somewhat pedantic it will avoid a certain
amount of confusion  to distinguish the tangent spaces $T_{\hb}$
and $T_{\hbw}$ at $\hbb$ and $\hbw$
from the base spaces. We use primes to indicate vectors and points in
the tangent space. Thus $\bb = \bb' + \hbb$ and $\bw = \bw' + \hbw$. 
The exponential maps will both be denoted $\pi$ so 
$\pi(\bb') = \bb$ and $\pi(\bw') = \bw$. As computed above
the tangent map induced by $g$ is $A:T_{\hbw}\raw T_{\hbb}$.

\subsection{the linear perturbation lemma}
Recall the goal is to perturb $\psi_\hbw$ to obtain a not pinching
$\psi_\bw$. Using Lemma~\ref{pincheq} this could 
be accomplished with a $\bw$ so that  $\bb = g(\bw)$ has
the property that no two of its coordinates are equal. This corresponds
to $g(\bw)$ avoiding the subspaces where two coordinates are equal.
So for $i_1 \not = i_2$ define the subspace of $\RR^M$ by
$V_{i_1, i_2} = \{ \bb : b_{i_1} = b_{i_2}\}$.
In the tangent space $T_{\hbb}$ this corresponds to 
$$V_{i_1, i_2}^\prime = 
\{ \bb' : b_{i_1}' + \hb_{i_1} = b_{i_2}'  + \hb_{i_2}\}$$

For each $b\in [1/n, 1]$ let $\Lambda(b) = \{i ; \hb_i = b \}$. If for
all $b$ the set $\Lambda(b)$ consists of just one or no elements,
we are done. Let $\{b^{(1)}, \dots, b^{(k)}\}$ be the distinct
$b$'s for which $\Lambda(b)$ has more than one element. 
For these $b$  and $\ell = 1,, \dots, k$ let
$$
\Omega(b^{\ell}) = (\Lambda(b^{(\ell)})) \times \Lambda(b^{(\ell)}) \setminus
\Delta
$$
where $\Delta$ is the diagonal of $\{1, \dots, M\}^2$. 
Finally, let $\Omega = \cup_\ell \Omega(b^{\ell})$, so $\Omega$ is
all the indices $(i_1, i_2)$ with $i_1 \not = i_2$ and
 $b_{i_1} = b_{i_2}$.

It follows that  $V_{i_1, i_2}'$ is subspace of the tangent space  $T_{\hb}$
when $(i_1, i_2) \in \Omega$ 
and is otherwise an affine subspace of $T_{\hb}$ that avoids the origin.
Going back to the Implicit Function Theorem we may thus shrink the
neighborhood $U$ to $U'$ so that $A(U') \cap V_{i_1, i_2}' = \emptyset$ for
all $(i_1, i_2) \not\in \Omega$. In $T_{\hbw}$ let 
 $\bm^\perp = \{\bw' : \bw' \cdot \bm\} = 0$, 
$W_0 =  \{\bw' : w'_1= 0\}$,  and $U^*  = U' \cap \bm^\perp \cap W_0$.
Thus $\bw\in U^*$ will satisfy the conditions \eqref{cond} since
$w_1' = 0$ corresponds to $w_1 = w_1' + \hw_1 = -1$ and $\hbw' \cdot\bm
= (\bw - \hbw)\cdot \bm = \bw\cdot\bm$ since $\hbw\cdot\bm = 0$.

A subset $K\subset \RR^N$ is said to \textit{contain lines} if
whenever a nonzero vector $\bv\in K$ then $t \bv\in K$ for
all $t\not = 0$. When $V\subset \RR^N$ is a subspace of nonzero
codimension, $V^c$ contains lines.  

We first show the existence of the appropriate perturbation 
in the tangent plane $T_{\hbw}$  and then in the next section use
 Taylor's theorem to get the perturbation required for Theorem~\ref{lipschitz}.
\begin{lemma} \label{lemma2}
For each $(i_1, i_2) \in \Omega$ there is
a vector $\vv' \in U^*$ so that 
$A \vv' \not\in V_{i_1, i_2}'$. 
Thus  
\begin{equation}\label{claim2}
\Gamma= \{ \bw'\in U^* : A \bw' \cap V_{i_1, i_2}' = \emptyset\ 
\text{for all}\ (i_1, i_2)\}
\end{equation}
\end{lemma}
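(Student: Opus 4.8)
The plan is to reduce the claim to a single non-vanishing statement for a row-difference of $A$. Fix $(i_1,i_2)\in\Omega$, so $\hb_{i_1}=\hb_{i_2}=:b$ and $V_{i_1,i_2}'=\{\bb':b_{i_1}'=b_{i_2}'\}$ is a linear subspace of $T_{\hbb}$. Then $A\vv'\in V_{i_1,i_2}'$ iff $(A\vv')_{i_1}=(A\vv')_{i_2}$, i.e.\ iff $r\cdot\vv'=0$, where $r\in T_{\hbw}$ has entries $r_j=A_{i_1,j}-A_{i_2,j}$ (row $i_1$ minus row $i_2$ of $A$). Since $U^*$ is a relatively open neighbourhood of the origin in the subspace $S=\bm^\perp\cap W_0$, whose orthogonal complement is $\mathrm{span}\{\bm,e_1\}$ ($\bm$ and $e_1$ being independent as every $m_j>0$), a vector $\vv'\in U^*$ with $A\vv'\notin V_{i_1,i_2}'$ exists exactly when $r\notin\mathrm{span}\{\bm,e_1\}$. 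Once this is shown for every $(i_1,i_2)\in\Omega$, each set $\{\bw'\in U^*:A\bw'\notin V_{i_1,i_2}'\}$ is the complement in $U^*$ of the $A$-preimage of a proper subspace, hence relatively open and dense; for $(i_1,i_2)\notin\Omega$ it is all of $U^*$ by the passage to $U'$; and $\Omega$ is finite, so $\Gamma$ is open and dense in $U^*$, in particular nonempty, which is what the next subsection needs.

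To prove $r\notin\mathrm{span}\{\bm,e_1\}$, suppose $r=\gamma\bm+\delta e_1$. Because $A_{i,j}=0$ for $j\notin\overline{J_i}$, the set $\{j:r_j\ne 0\}$ lies in $\overline{J_{i_1}}\cup\overline{J_{i_2}}$, of size at most $2q$; since $N>3q$ there is an index $j_0\geq 2$ with $r_{j_0}=0$, whence $\gamma m_{j_0}=0$ and, as $m_{j_0}=\lambda(X_{j_0})>0$, $\gamma=0$. Thus $r=\delta e_1$, i.e.\ $A_{i_1,j}=A_{i_2,j}$ for every $j\geq 2$. Write $e_{i,j}$ for the multiplicity of the index $j$ in $J_i$ (and $e_{i,j}=0$ if $j\notin J_i$). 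Using \eqref{diff}, the facts that $\alpha_i<0$, $b\ne 0$, and $1+b\hw_j>0$ for $j\in\overline{J_{i_1}}\cup\overline{J_{i_2}}$ (such a factor occurring in $F_i(\hbb,\hbw)=1$), the equality $A_{i_1,j}=A_{i_2,j}$ becomes $e_{i_1,j}/\alpha_{i_1}=e_{i_2,j}/\alpha_{i_2}$, so $e_{i_1,j}=c\,e_{i_2,j}$ for all $j\geq 2$, with $c:=\alpha_{i_1}/\alpha_{i_2}>0$.

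The multiplicity of the index $1$ is not yet controlled, and this is the crux; it is pinned down using the standard-like normalization $\hw_1=-1$ (so $1+b\hw_1=1-b$) together with the equations $F_{i_1}(\hbb,\hbw)=F_{i_2}(\hbb,\hbw)=1$ themselves, not merely their derivatives. Taking logarithms in these two equations, multiplying the second by $c$ and subtracting, the $j\geq 2$ terms cancel by $e_{i_1,j}=c\,e_{i_2,j}$ and one is left with $(e_{i_1,1}-c\,e_{i_2,1})\log(1-b)=0$. If $e_{i_1,1}\geq 1$ or $e_{i_2,1}\geq 1$, then $1\in J_{i_1}\cup J_{i_2}$ forces a factor $(1-b)$ in $F_{i_1}$ or $F_{i_2}$; since that product equals $1\ne 0$ we get $b<1$, hence $\log(1-b)\ne 0$ and $e_{i_1,1}=c\,e_{i_2,1}$. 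If $e_{i_1,1}=e_{i_2,1}=0$ the same identity is trivial. Either way $e_{i_1,j}=c\,e_{i_2,j}$ for all $j$, and summing (using $\sum_j e_{i,j}=q$ for each $i$) gives $q=cq$, so $c=1$ and $J_{i_1}=J_{i_2}$; this contradicts that $i_1\ne i_2$ and the $J_i$ are distinct, proving $r\notin\mathrm{span}\{\bm,e_1\}$.

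The step I expect to be the obstacle is the one in the previous paragraph. The derivative data of \eqref{diff} alone separates two plausible index sets only when neither orbit word visits the block $X_1$ on which $\hw_1=-1$; the degenerate possibility that $J_{i_1}$ and $J_{i_2}$ coincide away from the index $1$ and differ only in how often they hit $X_1$ must be excluded by returning to the nonlinear plausibility equations $F_i(\hbb,\hbw)=1$, using that a solution attached to an index set containing the index $1$ is necessarily $<1$.
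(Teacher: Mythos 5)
Your proposal is correct, and it takes a noticeably different route through the key step. You reduce the lemma to the minimal statement actually needed: since $\hb_{i_1}=\hb_{i_2}$ for $(i_1,i_2)\in\Omega$, the set $V_{i_1,i_2}'$ is linear, and existence of $\vv'\in U^*$ with $A\vv'\notin V_{i_1,i_2}'$ is equivalent to the row difference $r=r_{i_1}(A)-r_{i_2}(A)$ not lying in $\operatorname{span}\{\bm,E_1\}=(\bm^\perp\cap W_0)^\perp$. The paper instead proves the stronger fact that the $4\times N$ matrix with rows $r_{i_1}(A),r_{i_2}(A),\bm,E_1$ has rank $4$, and rules out degeneracy by a case analysis on how the supports and exponents of $J_{i_1},J_{i_2}$ compare, invoking the nonlinear equations only for the two-index special case \eqref{noway1}. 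Your argument shares the first move (killing the $\bm$-coefficient via $N>3q$ and $m_j>0$) and the use of \eqref{diff} with $\alpha_i<0$ and the common root $b$, but then replaces the case analysis by a uniform argument: $A_{i_1,j}=A_{i_2,j}$ for $j\ge 2$ forces $e_{i_1,j}=c\,e_{i_2,j}$ with $c=\alpha_{i_1}/\alpha_{i_2}>0$, and taking logarithms of $F_{i_1}(\hbb,\hbw)=F_{i_2}(\hbb,\hbw)=1$ extends the proportionality to the index $1$, so summing gives $c=1$ and $J_{i_1}=J_{i_2}$, contradicting distinctness of the plausible index sets. This buys two things: it needs only the row-difference condition (exactly what the existence claim and the openness/density of $\Gamma$ require, though the paper's rank-$4$ version is the form reused in Lemma~\ref{basica}(a)), and it handles uniformly a borderline configuration that the paper's case analysis does not explicitly address (when $e_{i_1,j}>e_{i_2,j}$ for every $j>1$ in the common support, compensated by the index-$1$ multiplicity, the asserted $\ell$ with $e_{i_1,\ell}\le e_{i_2,\ell}$ need not exist). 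Two small points to tighten: the logarithms require the factors to be positive, which follows because every $\hw_j\ge -1$ and $b\le 1$ give $1+b\hw_j\ge 1-b\ge 0$, while occurrence in a product equal to $1$ rules out vanishing (your parenthetical cites only the latter); and $\log(1-b)\ne 0$ also uses $b\ge 1/n>0$, not just $b<1$. Neither affects the validity of the argument.
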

is open and dense and there exists a $c>0$ so that
for all nonzero $\bw'\in \Gamma$, $t\bw'\in \Gamma$ for all 
nonzero $|t|< c$.

\begin{proof}
For $(i_1, i_2) \in \Omega$ consider the $4 \times N$ matrix
\begin{equation*}
B
= \begin{pmatrix}
r_{i_1}(A)\\
r_{i_2}(A)\\
\bm\\
E_1
\end{pmatrix}
\end{equation*}
where $r_i(A)$ is the $i^{th}$ row of the matrix $A$ and
$E_1 = (1, 0, 0, \dots , 0)$.  We will show that $B$ 
has rank $4$ and so the equation
\begin{equation}
  B \vv = \begin{pmatrix} b' \\ b'' \\0 \\ 0 \end{pmatrix}
\end{equation}
has a solution $\vv$ for arbitrarily small $b'\not= b''$ which
will prove the first sentence of the lemma.

Assume then that 
\begin{equation}\label{lincom}
\beta_1 r_{i_1}(A) + \beta_2 r_{i_2}(A)
+ \beta_3 \bm + \beta_4 E_1 = 0.
\end{equation}
  First, note that since
$N > 3q$ there are always $j>1$ with $A_{i_1 j} = A_{i_2 j} = 0$.
Since all $m_j>0$ this implies that $\beta_3 = 0$. 
Second we show   that a special case can't happen. Consider the 
pair of equations for $j>1$ with $a\not = c$ 
\begin{equation}\label{noway1}
\begin{aligned}
(1 + b \hw_1)^a (1 + b \hw_j)^{q-a} &= 1\\
(1 + b \hw_1)^c (1 + b \hw_j)^{q-c} &= 1.
\end{aligned}
\end{equation}
Recalling that $\hw_1 = -1$ solving, yields
$(1-b)^{a-c} = (1 + b\hw_j)^{a-c}$. One case is that 
$1-b = 1 + b \hw_j$ so $b=0$ or $\hw_j = -1$. The other is     
$b-1 = 1 + b \hw_j$ so $-2 = b(\hw_j-1) > -2 b$ so
$b >1$. All consequences that cannot occur under our assumptions.
Thus the situation in \eqref{noway1} cannot occur for $\hbw$.

Now there are two cases. Assume there is a $j>1$ with $A_{i_1 j} = 0$ 
and $ A_{i_2 j} \not= 0$ or vice versa. In this case
$\beta_1 = \beta_2 = 0$ and so $\beta_4 = 0$ as well.
So assume now that no such $j$ exists so 
we are left with the case
that the equations for $i_1$ and $i_2$ contain the same $w_j$ with
perhaps  an exception at $j=1$. Since the two equations must
be different but the sum of the exponents of both must be $q$
there are $k,\ell > 1$ with  $k \not = \ell$ with
$e_{i_1, k} > e_{i_2, k}$ and $e_{i_1, \ell} \leq e_{i_2, \ell}$.
Now from \eqref{lincom} $\beta_1 A_{i_i k} = - \beta_2 A_{i_2 k}$
and   $\beta_1 A_{i_i \ell} = - \beta_2 A_{i_2 \ell}$. Thus
letting $b = b_{i_1} = b_{i_2}$ and for $j = 1,2$
$z_{j} = \frac{\partial F_{i_j}}{\partial b}(\hbw, \hb)$ 
using \eqref{diff},
\begin{align}
\frac{\beta_1 b e_{i_1,k}}{z_1(1 + b\hw_k)}
&= - \frac{\beta_2 b e_{i_2, k}}{z_2(1 + b\hw_k)}\\
 \frac{\beta_1 b e_{i_1,\ell}}{z_1(1 + b\hw_\ell)}
&= -\frac{\beta_2 b e_{i_2,\ell}}{z_2(1 + b\hw_\ell)}
\end{align}
and then solving we get the contradiction
$e_{i_1, k} / e_{i_2,k} = e_{i_1,\ell}/ e_{i_2,\ell}$
which implies 
$\beta_1 = \beta_2 = 0$ and so $\beta_4 = 0$ as well.
This completes the proof of the first sentence.

 For the second sentence, by the construction of $U^*$
when $(i_1, i_2) \not\in\Omega$, $A^{-1}(V_{i_1, i_2})^c \cap U^* = U^*$.
We have just shown that $A^{-1}(V_{i_1, i_2})$ has positive
codimension and so  when 
$(i_1, i_2) \in\Omega$, $A^{-1}(V_{i_1, i_2})^c$ is open and
dense in $U^*$. Thus 
$$
\bigcap_{(i_1, 1_2)} A^{-1}(V_{i_1, i_2})^c
$$
is open, dense in $U^*$ implying \eqref{claim2}. 
\end{proof}

\subsection{the nonlinear perturbation lemma}
Now we prove the nonlinear version of  Lemma~\ref{lemma2} which is the last
main step in proving Theorem~\ref{lipschitz}.
\begin{lemma}\label{nonlin}
With $(\hb, \hbw)$ and $n$ 
as above and given $\epsilon > 0$ there exists a $\bw$ with
$\bw\cdot\bm  = 0$, $w_1 = -1$, and  
$\|\bw - \hbw\|<\epsilon$ such that the root $\bb$ to
 $$
F(\bb, \bw) = \One
$$
has the property that $b_i \not= b_j$ when $i\not = j$.
Further, for this $\bw$, the collection of its $(1/n)$-plausible
polynomials has the property that all their associated
solutions  are distinct.
\end{lemma}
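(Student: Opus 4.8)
The plan is to upgrade the infinitesimal statement of Lemma~\ref{lemma2} to a genuine perturbation via Taylor's theorem, and then, separately, to argue that perturbing $\hbw$ can only shrink — never enlarge — the list of $(1/n)$-plausible index sets. First I would pick any nonzero tangent vector $\bw'\in\Gamma$; this is possible because $\Gamma$ is dense in $U^{*}=U'\cap\bm^{\perp}\cap W_{0}$, which is relatively open and of dimension at least $N-2>0$ since $N>3q$. For $|t|<c$, with $c$ the constant from Lemma~\ref{lemma2}, the scaled vector $t\bw'$ again lies in $\Gamma\subset U'$, so I may set $\bw(t)=\pi(t\bw')=\hbw+t\bw'$ and $\bb(t)=g(\bw(t))$; by the construction of $U^{*}$ one has $\bw(t)\cdot\bm=0$ and $(\bw(t))_{1}=-1$, so conditions~\eqref{cond} hold and $\psi_{\bw(t)}\in\cSL$. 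Since $g$ is $C^{1}$ with derivative $A$ at $\hbw$, Taylor's theorem gives $\bb(t)=\hbb+t\,A\bw'+o(t)$ as $t\to0$.

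Next I would run the pair-by-pair argument. Fix $i_{1}\neq i_{2}$. If $(i_{1},i_{2})\notin\Omega$ then $\hb_{i_{1}}\neq\hb_{i_{2}}$, so $b_{i_{1}}(t)\neq b_{i_{2}}(t)$ for all small $t$ by continuity alone. If $(i_{1},i_{2})\in\Omega$ then $\hb_{i_{1}}=\hb_{i_{2}}$, so $V_{i_{1},i_{2}}'$ is the linear subspace $\{\bb':b'_{i_{1}}=b'_{i_{2}}\}$, and $\bw'\in\Gamma$ forces $A\bw'\notin V_{i_{1},i_{2}}'$, i.e. $(A\bw')_{i_{1}}\neq(A\bw')_{i_{2}}$; hence $b_{i_{1}}(t)-b_{i_{2}}(t)=t\big[(A\bw')_{i_{1}}-(A\bw')_{i_{2}}\big]+o(t)$ is nonzero for all small $t\neq0$. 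Since there are finitely many pairs, a single $t_{0}\neq0$ works; shrinking $t_{0}$ further I also arrange $|t_{0}|<c$ and $\|t_{0}\bw'\|<\epsilon$, and set $\bw=\bw(t_{0})$, $\bb=\bb(t_{0})=g(\bw)$. Each $b_{i}$ lies in $[1/n,1]$ and solves $G(b_{i},\bw,J_{i})=1$, so by Lemma~\ref{calculus} it is precisely the associated solution of $\prod_{j\in J_{i}}(1+yw_{j})$ whenever $J_{i}$ remains $(1/n)$-plausible for $\bw$; the $b_{i}$ being pairwise distinct then gives the first conclusion.

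For the second conclusion I need to control index sets outside the list $J_{1},\dots,J_{M}$. Here I would use that $\Lambda_{N}$ is finite: for each $J\in\Lambda_{N}$ that is \emph{not} $(1/n)$-plausible for $\hbw$, the continuous function $b\mapsto G(b,\hbw,J)$ is bounded away from $1$ on the compact interval $[1/n,1]$, so by uniform continuity of $(b,\bw)\mapsto G(b,\bw,J)$ there is a radius $\delta_{J}>0$ within which $J$ stays non-plausible. Requiring in addition $\|t_{0}\bw'\|<\min_{J}\delta_{J}$ forces the $(1/n)$-plausible index sets of $\bw$ to be a subset of $\{J_{1},\dots,J_{M}\}$, whose associated solutions are among the pairwise distinct numbers $b_{1},\dots,b_{M}$; this is the claimed distinctness.

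I expect the last step to be the main obstacle: a priori a perturbation of $\hbw$ could \emph{create} new plausible index sets whose associated solutions coincide with one another or with the $b_{i}$, and it is exactly the combination of finiteness of $\Lambda_{N}$ with the compactness/uniform-continuity estimate above that rules this out. A little bookkeeping is also needed at the endpoints of the plausibility interval, but it is harmless: the value $b=1/n$ can be kept off the set $\{\hb_{1},\dots,\hb_{M}\}$ by first enlarging $n$ slightly (which only strengthens the statement), and $b=1$ never corresponds to an actual pinch since $w_{1}=-1$ forces $f_{1,\omega,\psi_{\bw}}'=0$ on the positive-measure set $X_{1}$. The first conclusion, by contrast, is a routine Taylor-expansion transfer of Lemma~\ref{lemma2}.
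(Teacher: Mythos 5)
Your proof is correct and follows essentially the same route as the paper: choose a direction in $\Gamma$ from Lemma~\ref{lemma2}, scale it by a small $t$ and use Taylor's theorem (the paper does this with an explicit $c_1t^2<c_2t$ estimate, you with an $o(t)$ argument plus continuity for the pairs outside $\Omega$, which is equivalent), and then invoke finiteness of $\Lambda_N$ together with compactness of $[1/n,1]$ so that index sets implausible for $\hbw$ stay implausible for $\bw$. Your extra bookkeeping at the endpoints $b=1/n$ and $b=1$ is harmless added care, not a deviation in method.
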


\begin{proof}
We show there exist 
$\bw'\in U^*$ arbitrarily close to $0$ so 
that $ g(\bw)$ does not hit $V_{i_1, i_2}$ for
all $i_1 \not = i_2$ which using the definition of
$U^*$  proves the first sentence of the theorem.

In the notation used here Taylor's Theorem says 
\begin{equation}\label{taylor}
g(\bw) = \hbb + \pi(A \bw') + R(\bw)
\end{equation}
where for some $c_1$,  $\|R(\bw)\| < c_1 \|\bw'\|^2$ in a open
set containing $\hbw$ which is compactly contained in $U$.

By Lemma~\ref{lemma2} we may pick a unit vector $\vv'$ in $U^*$ 
pointing into the complement of the
$A^{-1}(V_{i_1 i_2})$ for all $(i_1, i_2) \in \Omega$, see 
Figure~2.

\begin{figure}[h]\label{test}
\includegraphics[width=.7\linewidth]{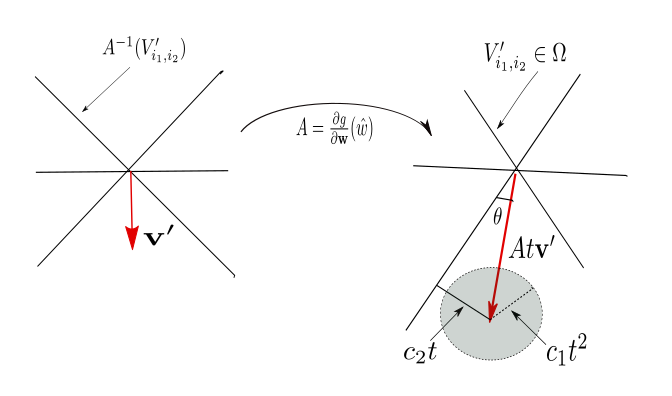}
\caption{Map induced by $g$ on the tangent spaces and error
bounds from Taylor's Theorem.}
\end{figure}

For  $t>0$, $\|A (t \vv')\| = t \|A\vv'\|$. If $\theta$
is the angle $A\vv'$ makes with the nearest $V_{i_1, i_2}'$, then
the distance from $A(t\vv')$ to $V_{i_1, i_2}'$ is $ 
t \|A\vv'\| \sin( \theta) := c_2 t$. Thus if $t$ is small enough
so that  $c_1 t^2 < c_2 t$, then by \eqref{taylor},
$g(t \vv) $ misses all $V_{i_1, i_2}$ for $(i_1, i_2)\in\Omega$.
In addition, shrinking  $t$ if necessary so that
$t \|A\vv'\| + c_1 t^2 < \min\{d(0, V_{i_1,i_2}')\} :
 (i_1, i_2)\not\in \Omega\}$
ensures that $g(t \vv) $
misses all $V_{i_1, i_2}$ for $(i_1, i_2)\not\in\Omega$. 

Now if $\bw = t\vv$ and  $\bb = g(\bw)$, then  $\bb$ has the property that 
$b_{i_1} \not=  b_{i_2}$ for all $i_1 \not = i_2$ and 
$F(\bb, \bw) = \One$.

Since $F$ was defined using the plausible index sets
of $\hbw$, at this point we know that all the solutions corresponding 
to $p(y) = 1$ are distinct for polynomials arising from
$(1/n)$-plausible index sets for $\hbw$ but using coefficients from
$\bw$. Now we 
 show that if $\bw$ is sufficiently close to $\hbw$ then
these  index sets  are also $(1/n)$-plausible for $\bw$.
The property of a polynomial not having a 
root $b$ in the compact interval $[1/n,1]$ is preserved under
small changes in the coefficients. Since there are finitely many
implausible equations for $\hbw$, for $\bw$ close enough to 
$\hbw$ if an index set is implausible for $\hbw$ it is also implausible for 
$\bw$. Thus index sets associated  with
$\bw$ which are $(1/n)$-plausible for $\bw$  are also $(1/n)$-plausible for 
$\hbw$. So by choosing $t$ small enough in $\bw = t\vv$ above
we guarantee the last statement of the lemma statement.

\end{proof}

\subsection{Proof of Theorem~\ref{lipschitz}}
Given $\phi\in\cSL\setminus A_{p/q, n}^{Lip}$ from
\eqref{Adef} and $\epsilon>0$,
use Lemma~\ref{one} to produce a $\hpsi_\hbw\in\cSL$ 
with derivative of the form $\hpsi_\hbw^\prime = \sum \hw_i I_{X_i}$ with 
$\normL{\phi, \hpsi_\hbw}< \epsilon/2$. If $\hpsi_\hbw$ is not $p/q$-pinching,
for any $b\geq 1/n$, we are done. Otherwise use
Lemma~\ref{nonlin} and \eqref{twolip} to  find a $\bw$ with 
$\psi_\bw\in\cSL$  and  $\normL{\hpsi_\hbw, \psi_\bw}< \epsilon/2$.
For this $\psi_\bw$ by construction, no two of its $(1/n)$-plausible
polynomials $p$ have the same solutions to $p(y) = 1$ in $[1/n,1]$.
 But Lemma~\ref{pincheq}, says that if
$\psi_\bw$ was $p/q$-pinching,
then $\bw$ has three plausible polynomials that yield the same
solution, a contradiction. Thus  $\psi_\bw\in A_{p/q, n}^{Lip}$,
and so $A_{p/q, n}^{Lip}$ is dense.

\section{PL forcing: $k>2$}\label{notlast}
\subsection{preliminaries}

Recall that a degree one circle homeomorphism $g$ is piecewise linear (PL)
if it is continuous and comprised of a finite number of affine pieces.
The points where the derivative is discontinuous are called
\textit{break points}. The adjectives \textit{ahead} and 
\textit{behind} for  points on the circle are determined
by the usual counterclockwise order on $S^1$. Thus the
interval $(a,b)\subset S^1$ is all the points ahead of $a$ and behind
$b$.  A break point has \textit{type up} if the slope
ahead is larger than that behind, and \textit{type down}
 otherwise. An \textit{interval of definition} is an interval
between adjacent break points. By convention, any statement
about the derivative of a PL function is implicitly followed 
by the phrase ``where is it defined''.

\subsection{Characterization of pinch points}
Recall that a pinch point on the $p/q$-tongue corresponds to 
a $(\omega, b)$ with  $\tf_{b,\omega}^q = R_p$. The first lemma
gives alternative characterizations of such points. The equivalence
of (a) and (b) is standard in the literature, at least for two
break points. The results in this lemma as well as other
characterizations are in \cite{paul}.

\begin{lemma}\label{another} $ \ $
Assume $g$ is a degree one PL-homeomorphism of the circle with
$k>1$ break points and $\rho(g) = p/q$. The following are equivalent
\begin{enumerate}[(a)]
\item $\tg^q = R_{p}$  
\item   Every
 break point of $g$ is on
a periodic orbit that contains at least two break points one of
type up and the other of type down.

\item There exists a PL homeomorphism $h$ with at most 
$\floor{\frac{qk}{2}}$
break points and $h g h^{-1} = R_{p/q}$.
\item $g$ has an invariant measure whose density with respect to
Lebesgue measure is a step function with at most
$\floor{\frac{qk}{2}}$ values all nonzero.
\end{enumerate}
\end{lemma}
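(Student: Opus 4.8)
The plan is to prove the equivalences in the cyclic order $(a)\Rightarrow(b)\Rightarrow(c)\Rightarrow(d)\Rightarrow(a)$, using the PL structure heavily at each link. First, for $(a)\Rightarrow(b)$: if $\tg^q=R_p$ then every point of $S^1$ is periodic with period dividing $q$ (and, since $\rho(g)=p/q$ with $\gcd(p,q)=1$, the period is exactly $q$). In particular each break point $c$ lies on a $q$-periodic orbit $o(c,g)$. The key point is that along this orbit the product of the one-sided slopes must equal $1$ (since $(\tg^q)'=1$ identically on each interval of definition of $\tg^q$, which is an affine map of slope $1$). If the orbit of $c$ contained break points of only one type, or only one break point, then all slope changes encountered around the orbit would push the product of slopes strictly to one side of $1$ — a monotonicity/sign argument identical in spirit to Lemma~\ref{calculus} — contradicting that the product is $1$. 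Hence each break-point orbit must contain at least one type-up and one type-down break point.

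For $(b)\Rightarrow(c)$: this is a conjugation-to-rotation construction. Since $\rho(g)=p/q$ and, under $(b)$, every break point is periodic, a standard argument shows $g$ has no wandering intervals, hence $g$ is topologically conjugate to $R_{p/q}$ by some homeomorphism $h$ (Poincaré/Denjoy for this combinatorial type). The work is to show $h$ can be chosen PL with at most $\floor{qk/2}$ break points. The break points of $h$ (equivalently, of $h^{-1}$) are forced to sit at the images of the break points of $g$ under the semiconjugacy, and these form a union of at most $k$ periodic orbits each of length $q$; condition $(b)$ — that type-up and type-down break points pair up along each orbit — is exactly what allows the affine normalization on consecutive linearity intervals to close up consistently, and a careful count of how many genuine slope discontinuities survive gives the bound $\floor{qk/2}$ rather than $qk$. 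Then $(c)\Rightarrow(d)$ is immediate: pull back Lebesgue measure under $h$; its density with respect to $\lambda$ is piecewise constant with breaks exactly at the (at most $\floor{qk/2}$) break points of $h$, and all values are nonzero since $h$ is a bi-Lipschitz PL homeomorphism. Finally $(d)\Rightarrow(a)$: if $\mu$ is $g$-invariant with step-function density, then $g$ maps each maximal interval of constancy of the density affinely onto another such interval scaling Lebesgue measure by the ratio of the two density values; iterating $q$ times and using $\rho(g)=p/q$, each such interval returns to itself after $q$ steps with total scaling $1$, forcing $(\tg^q)'=1$ everywhere, and since $\tg^q$ is a degree-one circle map with $\tg^q(0)\in[p,p+1)$ and constant slope $1$ fixing a point (any periodic point of $g$), we get $\tg^q=R_p$.

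The main obstacle I expect is the bookkeeping in $(b)\Rightarrow(c)$ — both producing the conjugating PL homeomorphism at all and, more delicately, controlling its number of break points by $\floor{qk/2}$. The naive conjugacy has up to $qk$ break points (one at each image of each break point along each period-$q$ orbit), and the improvement to $\floor{qk/2}$ relies on the pairing of type-up with type-down break points from $(b)$: when two consecutive break points along the orbit structure have opposite types and the intervening slopes are determined by the conjugacy-to-rotation normalization, adjacent affine pieces of $h$ can be made to match slopes, eliminating roughly half the potential breaks. Getting the parity/floor exactly right (and handling the edge case $k=1$, which is why the hypothesis is $k>1$) is where the care is needed; everything else is a routine assembly of classical circle-map facts together with the Calculus Lemma~\ref{calculus} already proved.
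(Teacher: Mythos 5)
Your step (a)$\Rightarrow$(b) and your step (c)$\Rightarrow$(d) match the paper's argument and are fine. The genuine problems are in (b)$\Rightarrow$(c) and (d)$\Rightarrow$(a). For (b)$\Rightarrow$(c), the appeal to ``no wandering intervals, hence conjugate to $R_{p/q}$ (Poincar\'e/Denjoy)'' is not a valid principle at a rational rotation number: Denjoy theory concerns irrational rotation numbers, and for $\rho(g)=p/q$ being conjugate to $R_{p/q}$ is \emph{equivalent} to $\tg^q=R_p$, which is exactly what still has to be extracted from (b). The missing idea is this: under (b) every break point of $g^q$ lies on the finitely many $p/q$-periodic orbits through break points of $g$, so on each complementary interval of this finite invariant set $g^q$ is affine and fixes both endpoints, hence is the identity there; only with $g^q=\Id$ in hand can one define the conjugacy, as the paper does, by taking $h$ affine from each complementary interval $Z_j$ to an interval of the appropriate length for $R_{p/q}$ and propagating by $h_{i,j}=R_{p/q}^i\,h_{1,j}\,g^{-i}$. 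Your mechanism for the bound $\floor{\frac{qk}{2}}$ is also misattributed: it does not come from ``slope matching eliminating half the breaks,'' but from counting --- by (b) each such periodic orbit contains at least two break points, so there are at most $\floor{k/2}$ orbits, hence at most $q\floor{k/2}\le\floor{\frac{qk}{2}}$ points in the union, and $h$ is affine between consecutive points of that union.

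For (d)$\Rightarrow$(a), the claim that each maximal interval of constancy of the density ``returns to itself after $q$ steps with total scaling $1$'' presupposes the conclusion: for a PL homeomorphism with rotation number $p/q$ that is \emph{not} conjugate to a rotation, no such recurrence holds, and invariance of $\mu$ only gives $(\tg^q)'(x)=\eta(x)/\eta(g^q(x))$, not $1$. The paper avoids this by integrating the density, $h(x)=\int_0^x\eta\,d\lambda$, so that $hgh^{-1}$ is a PL circle homeomorphism preserving Lebesgue measure, hence a rigid rotation, hence $R_{p/q}$ by the rotation number; this reduces (d) to (c), and (c)$\Rightarrow$(a) is trivial. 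If you want a direct (d)$\Rightarrow$(a), you must argue instead that $g^q$ fixes a point (rotation number $p/q$) and preserves a measure with everywhere-positive density, so it can have no non-fixed point (the forward $g^q$-images of a fundamental interval in a gap between fixed points would be disjoint with equal positive measure), whence $g^q=\Id$ and then $\tg^q=R_p$. As written, both links of your cycle have real gaps rather than just bookkeeping left to do.
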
 

\begin{proof}
The fact that (c) implies (a) is trivial. Assume the negation of
(b) holds so there is a break point $x_0$ for which 
no element of $g(x_0), g^2(x_0), \dots, g^{q-1}(x_0)$ is a break point
of opposite orientation. Since $g'>0$ away from break points, the left and right
hand derivatives of $q^q$ at $x_0$ are different, so $g^q \not = \Id$. Thus
(a) implies (b). 

Now assume (b). 
Let $\mathcal{P}$ be the collection of $p/q$-periodic orbits of $g$ 
that contain break points. Assume $\mathcal{P}$ contains $d$ elements
and so  
 $d \leq \floor{\frac{qk}{2}}$ and $d = m q$ where $m$
is the number of distinct orbits in $\mathcal{P}$.
  The collection of intervals between points
of $\mathcal{P}$  form an invariant partition of $S^1\setminus \mathcal{P}$. 
Pick a point $z_1\in \mathcal{P}$ and let $z_2, \dots, z_{m+1}\in \mathcal{P}$ 
be the consecutive points  in
$\mathcal{P}$  ahead of  $z_1$.
 Thus $z_{m+1}$ is on the orbit of $z_1$ 
and for $1\leq i \leq m$ there is one $z_i$ from each periodic orbit in 
$\mathcal{P}$. Let $Z_j = [z_j, z_{j+1}]$ and
 $\sZ_j = [(j-1)/m, j/m]\subset S^1$ for $1 \leq j \leq m$. 
For each such $j$ let
$h_{1,j}$ be the affine
homeomorphism from $Z_j$ to $\sZ_j$.  
Now define
$h_{i, j}: g^i(Z_j)\raw R_{p/q}^i(\sZ_j)$ as
 $h_{i,j} = R_{p/q}^i h_{1,j} g^{-i}$.
 Since $g^q = \Id$,
the $h_{i, j}$ fit together into a single homeomorphism
$h$ with  $h g h^{-1} = R_{p/q}$. By construction $h$ is
PL with at most $ d \leq \floor{\frac{qk}{2}}$ break points. 

The equivalence of (c) and (d) is standard. Assuming (c),
the invariant measure
is $ (h^{-1})_* \lambda = h' d\lambda $ and since $h$ is
PL with $d$ break points,  $h'$ is a step function with
$d$ nonzero values. If the density of the
invariant measure in (d) is the step function
$\eta$, then  letting $h(x) = \int_0^x \eta \; d\lambda$ we have
$h_*(\eta d\lambda) = \lambda$. Thus  
$h g h^{-1}$ is a PL homeomorphism that preserves Lebesgue
measure and so is rigid rotation. Since it is conjugate to
$g$ which has rotation number $p/q$, it must be rotation by $p/q$.

\end{proof}
The equivalence of (a) and (c) also holds
for a general homeomorphism with a non-PL conjugacy.

\subsection{Spaces  of PL forcings}

Since a PL map is certainly Lipschitz, using Lemma~\ref{char}
a standard-like PL forcing $\phi$ is one with $\phi' \geq -1$
on all intervals of definition and $\phi' = -1$ on
at least one of them. Let $\mathcal{S}^{PL}_k$ be the
collection of all such forcings with $k$ break points. 
To simplify matters we initially restrict to a normalized class
of forcings for which an interval with slope $-1$ is just
ahead of zero. It will turn
out that any $\psi\in\mathcal{S}^{PL}_k$ can be transformed to
member of this class by shifting and translating (as defined below)
and this restricted class has a good set of coordinates.

\begin{definition} $\ $
\begin{enumerate}[1.]
\item Let $\SPLk$ be all PL $\phi\in C^0(S^1)$ with $k$ break points
 such that 
\begin{enumerate}[(a)]
\item $0$ is a break point and $\phi(0) = 0$,
\item $\phi' \geq -1$, and 
$\phi' = -1$ in the interval of definition just ahead of $0$.
\end{enumerate}

\item 
 Let $\TPLk$ be all
 $(\bw, \bell)\in\RR^{2k}$ such that
\begin{enumerate}[(a)]
\item all $\ell_i > 0$ and $\sum \ell_i = 1$,
\item $w_i \geq -1$,  $w_1 = -1$, and $w_i \not = w_{i+1}$ 
with the indices mod $k$.
\item $\bw \cdot \bell = 0$.
\end{enumerate}
\end{enumerate}
\end{definition}
Letting $X_1 = [0,\ell_1], X_2 = [\ell_1, \ell_1 + \ell_2],
\dots X_k = [\ell_1 + \dots \ell_{k-1}, 1]$,
the map $(\bw, \bell) \mapsto
\pvl(x) := \int_0^x \sum w_i I_{X_i}(x) \,d\lambda$ is 
a  homeomorphism from $\TPLk$ to $\SPLk$ 
where $\TPLk$ is given 
the topology induced by the standard topology of $\RR^{2k}$
and  $\SPLk$ is given the $C^0$-topology. Starting with
$\phi\in \SPLk$, the corresponding lengths $\bell$ are computed from
the intervals of definition $X_i$ and the weights come from 
$w_i = \phi'$ on
$X_i$. The condition in  $\TPLk$ that $\bw \cdot \bell = 0$
ensures that $\pvl$ is periodic and  $w_i \not = w_{i+1}$
that the ends of intervals are true break points.

Elements of 
of $\SPLk$ and $\TPLk$ are called \textit{reduced} forcings.
Note that $\mathcal{S}^{PL}_k$, 
 $\SPLk$, and $\TPLk$ are all locally compact Hausdorff spaces
and thus are Baire. 
For $(\bw,\bell)\in \TPLk$ its standard-like family is
\begin{equation*}
f_{b,\omega, \bw, \bell}(x) := f_{b,\omega, \phi_{\bw, \bell}}(x) =
x + \omega + b\; \phi_{\bw, \bell}(x).
\end{equation*}

\begin{remark} Given a PL degree one homeomorphism
 $g:S^1\raw S^1$, letting $\hphi(x) = g(x) - x$ then 
$\hphi(x+1) = \hphi(x)$ and  $\phi = \hphi/|\min \hphi'|$
is standard-like. Thus   
$$ g(x) = x + |\min \hphi'| \phi,
$$
and so any such $g$ is included in a standard-like family.
\end{remark}

\subsection{Pinching and configurations}
A 4-tuple $(b, \omega, \bw, \bell)$ with $(\bw, \bell)\in \TPLk$
and the map  
$f_{b,\omega, \bw,\bell}$ are both called
\textit{p/q-pinching} if $\tf_{b,\omega, \bw,\bell}^q = R_p$.
This is equivalent to the pinching at 
$(\omega, b)$ of $p/q$-tongue for
the forcing $\phi_{\bw,\bell}$.

When $f_{b,\omega,  \bw,\bell}$ is $p/q$-pinching by Lemma~\ref{another} all
the break points are on $p/q$-periodic orbits which contain at least one
other break point. The notion of a configuration captures the combinatorics
of the orbits of break points.
\begin{definition} Given $p/q, k>2$ and $m\leq \floor{q/2}$ 
for $i=1, \dots, m$, let $\sz_i = (1/q) (i-1)/m$. 
A $(p/q, k)$-configuration with  $m$ orbits is the collection
of all $o(\sz_i, R_{p/q})$  coupled
with a collection of marked points $0=\sx_1  < \sx_2 < \dots
< \sx_k$ with at least two on each orbit.

A weighted  $(p/q, k)$-configuration is one with
the additional assignment of a weight $\sw_i$ to each interval
$\sX_i := [\sx_i, \sx_{i+1}]$ with indices mod $k$.
\end{definition}

Again using Lemma~\ref{another} we can assign each $p/q$-pinching 4-tuple  
to a weighted configuration. Assume that there are
$m$ orbits containing break points and recall $0$ is break point.
Let $a$ be the element
in $o(0, f_{b,\omega, \bw,\bell})$ that is
closest to $0$ in the forward direction. For
$i = 1, \dots, m$ let $z_i$ be the point in the orbit
of a break point that is in $[0,a)$ and order the $z_i$ so 
that $0 = z_1 < \dots < z_{m}$. For each $i$, let $\Psi(z_i)
= \sz_i$ and 
$\Psi(f_{b,\omega, \bw,\bell}^j(z_i)) = R_{p/q}^j(\sz_i)$.
If the break points for $f_{b,\omega, \phi_{\bw,\bell}}$ 
are $0=x_1 <  \dots< x_k$
and $j$ and $i$ are  such that $f_{b,\omega, \bw,\bell}^j(z_i) = x_i$,
 let the marked points of the configuration be $\sx_i = R_{p/q}^j(\sz_i)$. 
Thus $\Psi$ gives an order preserving conjugacy from
the $m$  orbits under $f_{b,\omega, \bw,\bell}$
of the break points to the $m$ orbits
of the $\sx_i$ under $R_{p/q}$. Further,
$\Psi$ takes the break points of $f_{b,\omega, \bw,\bell}$
to the marked points of
the configuration or $x_i \mapsto \sx_i$.
 A configuration is called \textit{cyclic} if $m=1$.
To add a weighting, recall that 
$ \phi_{\bw,\bell}' = w_i $ on the interval of definition
$X_i$, and so assign the weight $\sw_i = w_i$ to the interval
$\sX_i$.

The weighted configuration assigned to a $p/q$-pinch
point  $(b, \omega, \bw, \bell)$ has been defined 
so that the values
of $(f_{b,\omega, \bw,\bell}^q)'$ can be read off
from the weighted configuration. For example, if $j_i$ is such
that $R_{p/q}^i([0, \sz_2]) \subset \sX_{j_i}$,
then by  the chain rule and the definition of the weightings,
for  $x$ in the corresponding interval for
$f_{b,\omega, \bw,\bell}$, namely $x \in [0,z_2]$,  then
\begin{equation}\label{compute}
(f_{b,\omega, \bw,\bell}^q)'(x) =
(1 + b w_{j_0})(1 + b w_{j_2}) \dots (1 + b w_{j_{q-1}}).
\end{equation}

It is not clear which unweighted $p/q$-configurations correspond
to actual pinch points. Since we only want to consider those that
do, call a unweighted configuration $C$ \textit{allowable} if
there is a pinch point $(b, \omega, \bw, \bell)$ which corresponds
to $C$. For each $k, p/q$, let $\mathcal{A}_{k, p/q}$ be the set of
allowable $p/q$-configurations with $k$ break points.

A weighted  $(k, p/q)$-configuration induces a set of $m$ degree $q$
polynomials as follows. For each interval $\sZ_i = (\sz_i, \sz_{i+1})$ for
$i=1, \dots, m$ and $j=0, \dots, q-1$ by construction $R_{p/q}^j(\sZ_i)
\subset \sX_{s(j)}$ for some $j$. The $i^{th}$ induced
polynomial is then $p_i(y) = (1 + w_{s(0)} y)(1 + w_{s(1)} y)
\dots (1 + w_{s(q-1)} y)$.

\begin{remark}\label{conclusion}
There is an important conclusion that is required for the rest
of this section. If   
$(b, \omega, \bw, \bell)$ is a $p/q$-pinching 4-tuple for $k$,
then all the polynomials $p_i$ induced by its corresponding weighted
configuration must all have the same solution to $p_i(y) = 1$, namely,
$y=b$. This is because $\tf_{b,\omega, \bw,\bell}^q = R_p$
and so $(f_{b,\omega, \bw,\bell}^q)' = 1$
and the values of the induced polynomials at $b$ are the
derivatives of $f_{b,\omega, \bw,\bell}^q$ in various
intervals.
\end{remark}   
 
\subsection{a perturbation lemma}
Recall from Section~\ref{plaus1}   for a list $\bw\in\RR^k$ with
$-1 \leq w_1 \dots \leq w_k$ and $w_1 < 0 < w_k$ and a given $n>1$
and  $q>1$, 
its set of $(1/n)$-plausible polynomials is  the set of all 
degree $q$ polynomials of the form
$$
p(y) = \prod_{j=1}^q (1 - w_{i_j}y)
$$
with the  $w_{i_j}$ entries in $\bw$ (repeats allowed) with
the property that $p(y)=1$ has a solution $y\in [1/n, 1]$. 
Lemma~\ref{calculus} gives conditions for such a polynomial
to have a solution in  $(0,1)$.

\begin{lemma}\label{basica} $\ $
\begin{enumerate}[(a)]
\item Given $(\hbw,\hbell) \in \TPLk$ with $k>3$, 
$\epsilon > 0$,  $q>1$, and $n>1$ so that the collection
of $(1/n)$-plausible polynomials for $\hbw$ is nonempty,
there exists  $(\obw,\obell) \in \TPLk$,
with   $\|(\obw,\obell) - (\hbw, \hbell)\|<\epsilon$
 so that $\ow_i \not = \ow_j$ when $i\not = j$
and every $(1/n)$-plausible polynomial $p$  for $\obw$ has a different
value of $y$ that solves  $p(y) = 1$.

\item If $C$ is a weighted configuration with weights $\sbw$
so that $\sw_i \not = \sw_j$ when $i\not=j$ and $C$ induces
just one polynomial, then $C$ is cyclic.

\item Given $(\hbw,\hbell) \in \TPLk$ with $k>2$ and $\epsilon > 0$
 there exits $\oell$ with $\|\hbell - \obell\| < \epsilon$ and 
 $(\hbw,\oell) \in \TPLk$ so that $\hell_1/\hell_2 \not = \oell_1/\oell_2$.
\end{enumerate}
\end{lemma}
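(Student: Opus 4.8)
The three parts are largely independent, so the plan is to handle them separately. Part (a) is a direct adaptation of the entire machinery already developed for the Lipschitz case (Lemmas~\ref{pincheq}--\ref{nonlin}), now with the extra constraint that one also stays inside $\TPLk$. First I would observe that the constraint $\sum \ell_i=1$ with all $\ell_i>0$ is open, so perturbing $\hbw$ while leaving $\hbell$ fixed keeps us in $\TPLk$ provided the weights stay in the region $w_i\ge -1$, $w_1=-1$, $w_i\ne w_{i+1}$. The conditions $w_1=-1$ and $\bw\cdot\bell=0$ are exactly the linear conditions \eqref{cond} with masses $\bm=\bell$, so the same implicit-function-theorem setup applies: list the $(1/n)$-plausible index sets $J_1,\dots,J_M$ for $\hbw$, form $F(\bb,\bw)=\One$, solve for $\bb=g(\bw)$, and use Lemma~\ref{lemma2} and Lemma~\ref{nonlin} to perturb $\bw$ inside the affine slice $\{w_1=-1\}\cap\bell^\perp$ so that the coordinates of $g(\bw)$ become pairwise distinct and every index set plausible for $\bw$ was already plausible for $\hbw$. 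The one genuinely new wrinkle is the requirement $\ow_i\ne\ow_j$ \emph{for all} $i\ne j$ (not just adjacent $i,i+1$): since $\{\bw:\ow_i=\ow_j\}$ is a hyperplane, the open-dense argument of Lemma~\ref{lemma2} absorbs finitely many more hyperplanes with no change, so $\Gamma$ may simply be intersected with their complements. The main obstacle in part (a) is bookkeeping—verifying that the dimension count $k>3q$ used in Lemma~\ref{lemma2} is available; here $k$ is fixed and $q$ can be large, so one must instead work with the fixed $N=k$ and note that the relevant rank-$4$ argument in Lemma~\ref{lemma2} only needs \emph{some} coordinate $j>1$ absent from both of two $q$-fold products drawn from $k$ weights, which forces $k\ge$ some bound; I would either invoke $k>3$ together with the structure of the configurations or restate the lemma's hypothesis so the argument goes through, and I expect this is where the real care is needed.

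Part (b) is combinatorial and short. Suppose $C$ is a weighted $(p/q,k)$-configuration with $m$ orbits and pairwise-distinct weights, and $C$ induces only one polynomial. Each orbit $o(\sz_i,R_{p/q})$ contributes one degree-$q$ polynomial $p_i(y)=\prod_{j=0}^{q-1}(1+\sw_{s_i(j)}y)$, where $s_i(j)$ records which interval $\sX_{s_i(j)}$ contains $R_{p/q}^j(\sZ_i)$. Since the weights are distinct, the multiset of weights appearing (with multiplicity) in $p_i$ is determined by the polynomial $p_i$ itself—distinct linear factors $1+\sw y$ are distinguishable. So if all $p_i$ are the \emph{same} polynomial, then all $m$ orbits visit the intervals $\sX_1,\dots,\sX_k$ with exactly the same multiplicity profile. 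I would then argue that this common visiting pattern, combined with the fact that the $\sx_i$ are ordered and the $\sZ_i$ tile a single fundamental interval $[0,\sz_2]=[0,1/(qm)]$ whose $R_{p/q}$-iterates tile $S^1$, forces $m=1$: if $m\ge 2$ one can locate an interval $\sX_r$ of the configuration that sits strictly between two consecutive points of one orbit's image but is subdivided by a point of another orbit, producing different multiplicity counts for that weight across orbits—contradiction. The cleanest route is probably to count: $\sum_{i=1}^m(\text{multiplicity of }\sw_r\text{ in }p_i)$ equals the number of configuration intervals equal to $\sX_r$ times $q/(\text{something})$, and equality of all $p_i$ plus distinctness of weights pins down the number of marked points on each orbit to be equal, which together with "at least two per orbit" and $\sum=k$ and the cyclic structure of $R_{p/q}$ forces $m=1$. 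I do not expect a serious obstacle here, just the need to set up the counting notation carefully.

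Part (c) is essentially trivial. Given $(\hbw,\hbell)\in\TPLk$ with $k>2$, I want to perturb $\hbell$ to $\obell$ keeping $(\hbw,\obell)\in\TPLk$—i.e. keeping $\oell_i>0$, $\sum\oell_i=1$, and $\hbw\cdot\obell=0$—while breaking the ratio $\hell_1/\hell_2$. The constraint set $\{\obell:\sum\oell_i=1,\ \hbw\cdot\obell=0\}$ is an affine subspace of $\RR^k$ of codimension $2$, hence of dimension $k-2\ge 1$, and $\hbell$ lies in its relative interior (all coordinates positive). Since $\hbw$ is not a multiple of $(1,\dots,1)$ (because $w_1=-1$ while $\bw\cdot\bell=0$ forces some $w_i>0$), the two constraints are independent, so this affine subspace genuinely has positive dimension and contains points arbitrarily near $\hbell$. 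The function $\obell\mapsto\oell_1/\oell_2$ is continuous and nonconstant on this subspace (moving along any admissible direction that changes $\oell_1$ or $\oell_2$ disproportionately does the job, and such a direction exists since $k\ge 3$ gives at least one more free coordinate to compensate the mass and dot-product constraints). Hence in any $\epsilon$-ball about $\hbell$ within the subspace there is an $\obell$ with $\oell_1/\oell_2\ne\hell_1/\hell_2$, and for $\epsilon$ small all $\oell_i>0$, so $(\hbw,\obell)\in\TPLk$. The only thing to check is the genericity of nonconstancy, which follows because the only way $\oell_1/\oell_2$ could be locally constant on a positive-dimensional affine set is if every admissible direction has proportional first two coordinates, and that would force a linear dependence among the constraints plus $(1,0,\dots)$ and $(0,1,0,\dots)$ that contradicts $k>2$ together with $w_1\ne w_2$.
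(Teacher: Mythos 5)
You correctly locate the crux of part (a) --- the rank-four argument of Lemma~\ref{lemma2} eliminates the coefficient of the mass row using $N>3q$, while in the PL setting the number of weights is the fixed $k$ and cannot grow with $q$ --- but your plan does not resolve it. Your proposed repair, that one ``only needs some coordinate $j>1$ absent from both'' of the two $q$-fold products, fails in exactly the situation at hand: for fixed $k$ (say $k=4$) and $q\geq k$ two $(1/n)$-plausible index sets can each involve every weight, so no such coordinate need exist; and ``restating the lemma's hypothesis'' is not an option, since $k>3$ is the hypothesis you must prove the lemma under. The paper's resolution, which your plan misses by insisting on keeping $\hbell$ fixed, is to perturb the lengths as well as the weights: the argument of Lemma~\ref{lemma2} (with one extra observation, namely that $\hw_j=-1$ for some $j>1$ would force both polynomials in \eqref{noway1} to reduce to $(1-y)^q$, which is never $(1/n)$-plausible) shows that $\myspan(r_{i_1}(A), r_{i_2}(A), E_1)$ is only three-dimensional, so for $k>3$ its complement is open, dense and contains lines; one chooses $\obell$ near $\hbell$ in the intersection of these complements over all $(i_1,i_2)\in\Omega$ (keeping $\oell_i>0$, $\sum\oell_i=1$), and then the $4\times k$ matrix with rows $r_{i_1}(A)$, $r_{i_2}(A)$, $\obell$, $E_1$ has rank $4$ by the choice of $\obell$, not by a counting argument. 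After that, the scheme you describe (work in $U'\cap\obell^\perp\cap W_0$, apply Taylor's theorem as in Lemma~\ref{nonlin}, then avoid the finitely many hyperplanes $\{w_i=w_j\}$ and $\{w_i=-1\}$ inside the affine slice $\{\bw : \bw\cdot\obell=0,\ w_1=-1\}$ to make all weights distinct while preserving which index sets are plausible and the distinctness of their roots) does run essentially as you outline; but without the perturbation of $\bell$ the first, essential rank step is unsupported. This is a genuine gap, not bookkeeping.

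On the other two parts: for (b) your first idea is close to the paper's argument (compare the adjacent fundamental intervals $\sZ_1=[0,\sz_2]$ and $\sZ_2=[\sz_2,\sz_3]$; at every step their rotates lie in the same or in adjacent configuration intervals, and at some step the image of $\sz_2$ is a marked point, so with pairwise distinct weights the induced weight lists, hence polynomials, differ), but your fallback ``counting'' route, ending with ``equal numbers of marked points per orbit forces $m=1$,'' is not a valid deduction and would have to be discarded. Part (c) is in the same spirit as the paper, which simply solves the two linear constraints explicitly; be aware, though, that the nonconstancy of $\ell_1/\ell_2$ on the constraint set is precisely the point requiring computation rather than the soft dimension count you give: for $k=3$ the relevant derivative is $w_3/(\ell_1^2(w_2-w_3))$, so nonvanishing is a concrete condition on the data (it needs $w_3\neq0$), not an automatic consequence of $k>2$ and $w_1\neq w_2$.
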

\begin{proof}
The proof of (a)  follows that of Lemmas~\ref{lemma2} and~\ref{nonlin} and
shares their notation. The main difference is that instead of a vector
of masses $\bm$ there is a vector of lengths $\bell$ and the dimension
of $\bell$ is fixed at $k$ and cannot be adjusted with $q$. Also,
the weights are no longer ordered by index, so we have to take
care if some $w_j = -1$ for $j>1$.

Recall that $A:T_\hbw\raw T_\hbb$ is the tangent map 
at $\hbw$ and $\hbb$ of the function
 that locally takes a collection of weights $\bw$ to the solutions of
 $(1/n)$-plausible polynomials associated with $\hbw$.
For $(i_1, i_2) \in \Omega$ with the small change to \eqref{noway1} below, 
the proof in Lemma~\ref{lemma2} shows that
  $\myspan(r_{i_1}(A), r_{i_2}(A), E_1)$ is 
three-dimensional in $\RR^k$.
The small change is that in the current situation
$\hw_j = -1$ when $j>1$ is allowable but then
both polynomials  reduce to $(1-b)^q$ and 
$(1-y)^q = 1$ has no solution in $[1/n, 1]$.
Thus since $k>3$, 
$\cO_{i_1, i_2} := \myspan(r_{i_1}(A), r_{i_2}(A), E_1)^c$ is
open, dense, and contains lines in $\RR^k$ and so
$$
\cO = \bigcap_{(i_1, i_2) \in \Omega} \cO_{i_1, i_2}
$$
is also open, dense, and contains lines in $\RR^k$.
So we may pick $\obell\in\cO$ so that all $\oell_i>0$, 
$\sum \oell_i =1$ and $\|\hbell-\obell\| < \epsilon/2$.

Thus the matrix  
\begin{equation*}
B
= \begin{pmatrix}
r_{i_1}(A)\\
r_{i_2}(A)\\
\obell\\
E_1
\end{pmatrix}
\end{equation*}
has rank $4$ for all $(i_1, i_2) \in \Omega$
and so the equation 
\begin{equation*}
  B \vv = \begin{pmatrix} b' \\ b'' \\0 \\ 0 \end{pmatrix}
\end{equation*}
has a solution $\vv$ for arbitrarily small $b'\not= b''$.

Letting $U^*  = U' \cap \obell^\perp \cap W_0$ we 
have that  
for each $(i_1, i_2) \in \Omega$ there is
a vector $\vv' \in U^*$ so that 
$A \vv' \not\in V_{i_1, i_2}'$.
 In addition, with the choice of $U'$ as in Lemma~\ref{lemma2}
when $(i_1, i_2) \not\in\Omega$, 
$A^{-1}(V_{i_1, i_2})^c \cap U^* = U^*$. Thus  
\begin{equation*}
\Gamma= \{ \bw'\in U^* : A \bw' \cap V_{i_1, i_2}' = \emptyset\ 
\text{for all}\ (i_1, i_2)\}
\end{equation*}
is open and dense and there exists a $c>0$ so that
for all nonzero $\bw'\in \Gamma$, $t\bw'\in \Gamma$ for all nonzero $|t|< c$.

Now just as in the proof of Lemma~\ref{nonlin} use Taylor's
theorem 
to get  $(\oobw, \obell)$ with
the required properties with the exception of  the condition that
 $\oow_i \not = \oow_j$ when $i\not = j$. Note that by the proper
choice of the vector $\bv'$ in the proof of Lemma~\ref{nonlin}
we can ensure that $w_j\geq -1$ for $j>1$ and so
$(\oobw, \obell)\in\TPLk$.

We require one last perturbation to get a $(\obw, \obell)\in\TPLk$  
with  $\ow_i \not = \ow_j$ while maintaining the condition
on the associated solutions of $(1/n)$-plausible polynomials.
For fixed
$\obell$ and restricted to $\{\bw : w_1 = -1\}$
the collection of $\bw$ such that $(\bw, \obell)\in\TPLk$ is
a subspace of the affine hyperplane $H$ defined by 
$$
w_2 \oell_2 + \dots + w_k\oell_k = \oell_1.
$$
For $i\not = j$, $i,j > 1$ let $V_{i,j} = \{\bw : w_i = w_j\}$
and $W_{i} = \{\bw : w_i = -1\}$.
Now $H$ has normal vector $(\oell_2, \dots, \oell_k)$ while
$V_{i,j}$ has a normal vector  which has a one in
the $i^{th}$ place and minus one $j^{th}$ place and zeros elsewhere
and $W_{i}$ has a normal vector that has a one in the $i^{th}$
place and zeros elsewhere.
All of these are not parallel and so each $V_{i,j}$ and
$W_i$ are  transverse to $H$
and thus
$$L = H \cap \bigcap_{i\not = j} V_{i,j}^c \cap \bigcap_{i} W_{i}^c $$
is open and dense in $H$. Thus we may find $\obw$ arbitrarily
close to $\oow$ with the property that all $\ow_i$ are different
and  $\ow_i > -1$ for $i>1$.
As shown at the end of the proof of Theorem~\ref{nonlin} any
$\obw$ close enough to $\oobw$ shares the property that
all its  $(1/n)$-plausible polynomials 
 have  different
associated solutions.
Thus we can perturb $\oobw$ to $\obw$ in $L$ satisfying all the
required conditions.

For (b) assume to the contrary that $C$ is a non-cyclic 
configuration which has the property that $\sw_i \not = \sw_j$
 when $i\not=j$ and $C$ induces just one polynomial. Since
$C$ is not cyclic, then $m>1$ and so we have distinct $\sZ_1 = [0, \sz_2]$
and $\sZ_2 = [\sz_2, \sz_3]$. For each $j$ either $R_{p/q}^j(\sZ_1)$
and $R_{p/q}^j(\sZ_2)$ are in adjacent intervals $[\sx_s,\sx_{s+1}]$,
$[\sx_{s+1},\sx_{s+2}]$ or in the same interval. Since by definition
for some $j$, $R_{p/q}^j(\sz_2)$ is a marked point,  the first
possibility must happen at least once. But then since all the weights
 $\sw_i$ are distinct, the list of weights generated by $\sZ_1$ and $\sZ_2$
are different, and so $C$ induces at least two different polynomials,
a contradiction.

For (c) solving the condition $\bw\cdot\bell = 0$
 for $\ell_{k-1}$ and $\ell_{k}$ in terms
of $\ell_1, \dots, \ell_{k-2}$ while recalling that
adjacent  $w_i$ are distinct and so  $w_{k-1} - w_k\not = 0$
 yields that
the following parameterizes a subspace of  $\TPLk$:
\begin{equation*}
(\ell_1, \dots, \ell_{k-2}) \mapsto
\left(\ell_1, \dots, \ell_{k-2}, 
\frac{\psi_1 - w_k \psi_2}{w_{k-1} - w_k},
\frac{w_{k-1} \psi_2 -  \psi_1}{w_{k-1} - w_k}\right)
\end{equation*}
where 
\begin{equation*}
\begin{split}
\psi_1 &= \psi_1(\ell_1, \dots, \ell_{k-2})  
= \ell_1 - w_2\ell_2\dots - w_{k-2}\ell_{k-2}\\
\psi_2 &= \psi_2(\ell_1, \dots, \ell_{k-2})  
= 1 - \ell_1 - \ell_2\dots -\ell_{k-2}.
\end{split}
\end{equation*}
Thus when $k\geq 4$ we can alter $\ell_1$ and $\ell_2$
independently. Since $(\hbw,\hbell) \in \TPLk$ we can 
thus perturb, say $\hell_1$, while maintaining all $\ell_i>0$
and $\sum \ell_i = 1$.
 When $k=3$, treating $\ell_2$ as a function of $\ell_1$,
\begin{equation*}
\frac{\ell_2}{\ell_1} = 
\frac{\ell_1 - w_3(1-\ell_1)}{\ell_1(w_2 - w_3)}
\end{equation*}
and its derivative with respect to $\ell_1$ 
is 
\begin{equation*}
\frac{w_3}{\ell_1^2 (w_2 - w_3)} \not = 0.
\end{equation*}
So we can perturb $\ell_1$ while 
maintaining $\ell_2 = (\ell_1 - w_3(1-\ell_1))/(w_2 - w_3)$
to alter $\ell_1/\ell_2$
while staying in $\TPL_3$.

\end{proof}

\subsection{The genericity theorem}

\begin{theorem}\label{lipgen}
For each $k>2$ it is generic amongst all standard-like forcings
with $k$ break points that there is no pinching
in any of the  rational Arnol'd tongues in its $(\omega, b)$ ($b>0$) 
family.
\end{theorem}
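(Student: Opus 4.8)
The plan is to reduce Theorem~\ref{lipgen} to the perturbation machinery already developed, mirroring the structure of the $C^r$ and Lipschitz arguments in Section~\ref{main}. Fix $k>2$. As noted, $\mathcal{S}^{PL}_k$ is a locally compact Hausdorff space, hence Baire; and by the shift-and-translate remarks it suffices to work inside the reduced class, whose coordinate model $\TPLk$ is an open subset of an affine slice of $\RR^{2k}$ (cut out by $\sum\ell_i=1$ and $\bw\cdot\bell=0$), so it too is Baire. For each $p/q$ and each $n>1$ I would introduce
\begin{equation*}
A^{PL}_{p/q,n} = \{(\bw,\bell)\in\TPLk : \tf_{b,\omega,\bw,\bell}^q \neq R_p \text{ for all } \omega \text{ and all } b\in[1/n,1]\},
\end{equation*}
and show each is open and dense; then $\bigcap_{p/q,n} A^{PL}_{p/q,n}$ is the desired dense $G_\delta$ of non-pinching forcings, and translating back gives genericity in $\mathcal{S}^{PL}_k$.

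\textbf{Openness} is the easy step: the complement consists of 4-tuples admitting a pinch point with $b\in[1/n,1]$, and a convergent sequence of such 4-tuples has (after passing to a subsequence) convergent $b_j\to b_0\in[1/n,1]$, $\omega_j\to\omega_0$, and $(\bw_j,\bell_j)\to(\bw_0,\bell_0)\in\TPLk$ — note the limit stays in $\TPLk$ because the defining conditions $w_i\geq -1$, $w_1=-1$, $\ell_i>0$, $\sum\ell_i=1$, $\bw\cdot\bell=0$ are closed except for $\ell_i>0$ and $w_i\neq w_{i+1}$, which is exactly why one must argue that these strict conditions survive; if a limiting length degenerates to $0$ or two adjacent weights collide, the number of genuine break points drops and the limit lies in a lower $\mathcal{S}^{PL}_{k'}$, so within $\mathcal{S}^{PL}_k$ (the unreduced, honest-$k$-breakpoint space) the set is still closed. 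Since $\rho$ and composition are $C^0$-continuous, $\tf_{b_0,\omega_0,\bw_0,\bell_0}^q=R_p$, so the complement is closed.

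\textbf{Density} is where Lemma~\ref{basica} does the work, and it is the main obstacle — not because the idea is hard but because one must assemble three separate perturbations (of the weights $\bw$, of the lengths $\bell$, and of $\bw$ again to separate equal weights) in a single small step while staying in $\TPLk$. Given $(\hbw,\hbell)\in\TPLk$ that is $p/q$-pinching at some $(\hb,\homega)$ with $\hb\in[1/n,1]$, I would first apply Lemma~\ref{basica}(a) to perturb $(\hbw,\hbell)$ to $(\obw,\obell)\in\TPLk$, $\epsilon$-close, with all $\ow_i$ distinct and all $(1/n)$-plausible polynomials for $\obw$ having distinct associated solutions. If $(\obw,\obell)$ is non-pinching we are done; otherwise, by Remark~\ref{conclusion} its weighted configuration induces polynomials all solved by $y=\ob$, and since all are $(1/n)$-plausible with distinct solutions there can be only one such polynomial — whence by Lemma~\ref{basica}(b) the configuration is cyclic ($m=1$). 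A cyclic configuration has all $k$ break points on a single $p/q$-orbit, which forces a rigid combinatorial relation: the marked points $0=\sx_1<\dots<\sx_k$ are among the $q$ points $\{j/q\}$, and the single induced polynomial records, reading around one orbit, a product of $(1+\ob w_j)$ whose value-$1$ condition, combined with the fact that $\phi_{\obw,\obell}(1)=\phi_{\obw,\obell}(0)=0$ (i.e.\ $\obw\cdot\obell=0$), pins down $\ob$ and the spacing. The final move is Lemma~\ref{basica}(c): perturb $\hbell$ (keeping $\obw$ fixed) to change $\ell_1/\ell_2$ while staying in $\TPLk$. In the cyclic case the break points $x_1,\dots,x_k$ all lie on one periodic orbit of $f_{\ob,\homega,\obw,\bell}$, and changing $\ell_1/\ell_2$ changes the position of $x_2$ relative to $x_1$ hence the itinerary of the orbit through the intervals of definition — but the requirement $\tf^q=R_p$ with fixed combinatorics over-determines the lengths (the orbit must return to close up while passing through prescribed intervals with prescribed slopes), so a generic length-perturbation destroys pinching. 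I expect the genuinely delicate point to be verifying that after the $\bell$-perturbation no new pinching configuration can appear — handled as at the end of the proof of Lemma~\ref{nonlin}, since there are only finitely many allowable configurations $\mathcal{A}_{k,p/q}$ and each imposes a polynomial (hence closed, nowhere-dense once violated) constraint on $(\bw,\bell)$, so small perturbations cannot fall into any of the finitely many pinching strata. Combining: $A^{PL}_{p/q,n}$ is dense, completing the proof.
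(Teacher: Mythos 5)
Your overall scaffolding (openness from $C^0$ continuity, Baire in $\TPLk$, then transferring to $\mathcal{S}^{PL}_k$, with Lemma~\ref{basica} doing the perturbative work) matches the paper, but there is a genuine gap in the density step, and it is exactly at the point you flag as ``the genuinely delicate point.'' You work with the sets $A^{PL}_{p/q,n}$ that exclude \emph{all} pinch points with $b\in[1/n,1]$, so your perturbation must produce a forcing with no pinch point of any kind; but the perturbations supplied by Lemma~\ref{basica}(a)--(c) only rule out pinch points realizing the particular configuration $C$ carried by the original pinch point $(\hb,\homega,\hbw,\hbell)$. Your attempt to close this hole --- ``each of the finitely many allowable configurations imposes a polynomial constraint, hence closed and nowhere dense once violated, so small perturbations cannot fall into any pinching stratum'' --- is circular: that each configuration's pinching locus is nowhere dense in $\TPLk$ is precisely what has to be proved (the starting point itself lies in one such stratum), and it does not follow formally from ``being a polynomial constraint'' without running the perturbation lemmas for that configuration. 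The paper resolves this structurally: it defines, for each $p/q$, $n$, and each allowable configuration $C\in\mathcal{A}_{k,p/q}$, the set $G_{p/q,C,n}$ of parameters with no pinch point \emph{inducing $C$}, proves each such set is open and dense (the density argument is then allowed to leave behind pinch points with other configurations), and takes the Baire intersection over the countably many $(p/q,C,n)$. Without this stratification your density claim for $A^{PL}_{p/q,n}$ is not established.

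Two secondary points. First, your order of operations (apply Lemma~\ref{basica}(a) first, always) breaks down for $k=3$, since part (a) requires $k>3$; the paper's case split handles this because for $k=3$ every configuration is cyclic and only the length perturbation of part (c) is used, with the \emph{original} weights $\hbw$ kept fixed. Second, in the cyclic case your justification (``changing $\ell_1/\ell_2$ changes the itinerary \dots the lengths are over-determined'') is too vague to be a proof; the actual argument is quantitative: for a fixed weight vector and fixed cyclic configuration, $b$ is pinned as the unique root of $Q_{q-1}(y)=1$ and the ratio $\ell_1/\ell_2=P_1(b)/P_2(b)$ is determined by the weighted configuration, so a perturbation of $\bell$ that changes $\ell_1/\ell_2$ (Lemma~\ref{basica}(c)) yields an immediate contradiction with pinching for that configuration. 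Finally, ``translating back'' to $\mathcal{S}^{PL}_k$ is not a one-liner: one must restrict the reduction map to the complement of the closed nowhere dense set of forcings with slope $-1$ on two or more intervals of definition, where it is a homeomorphism, and check that the shift and translation only shear the $(\omega,b)$ diagram without affecting pinching.
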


\begin{proof}

We first prove genericity in $\TPLk$ and then extend it  to
 $\mathcal{S}^{PK}_k$.

Fix $p/q$ and  $n>2$ and an allowable configuration 
$C\in \mathcal{A}_{k, p/q}$.  Let $F_{ p/q, C, n}$ be
all  $(\bw, \bell) \in \TPLk$ such that
 there exists
$b\in [1/n, 0]$ and  $\omega \in S^1$ 
so that $(b,\omega, \bw, \bell)$ 
is a $p/q$-pinch point 
which  induces the configuration $C$.
We show its complement 
$G_{ p/q, C, n} = \TPLk\setminus F_{p/q, C, n} $ 
is open and dense in $\TPLk$. Since convergence in
$\TPLk$ is implies $C^0$ convergence in $\mathcal{S}^{PL}_k$,
 $F_{p/q,C , n}$ is closed in $\TPLk$.

For density, pick $(\hbw, \hbell)\in F_{p/q, C, n}$ so that
$ (\hb,\homega, \hbw, \hbell) $ is a $p/q$-pinch point with configuration
$C$. We need to show that there are $(\obw, \obell)\in \TPLk$
arbitrarily close to $(\hbw, \hbell)$ with the property that
 for all $(\omega, b)$, the 4-tuple
 $(b,\omega, \obw, \obell) $ is not a $p/q$-pinch point with
configuration $C$.

There are two cases to consider. First assume that $C$ is not
cyclic. Now each orbit of a marked point in a configuration
must contain another marked point so 
when $k=3$ every configuration is cyclic. We
therefore assume $k>3$. Use Lemma~\ref{basica}(a) to perturb 
$(\hbw,\hbell)$ to $(\obw, \obell)$  with the
property that all the $(1/n)$-plausible polynomials for $\obw$ have
different roots and all the elements of $\obw$ are distinct.
If $ (b,\omega, \obw, \obell) $ for all $b, \omega$
is not a $p/q$-pinch point with configuration $C$
we are done, so assume $ (\ob,\oomega, \obw, \obell) $ is. 
Since all the $(1/n)$-plausible
equations for  $\obw$ have different roots  and one of them
must have solution $\ob$, using Remark~\ref{conclusion}
we see that the configuration $C$
has just one induced equation and further by the choice of $\obw$ 
all the elements of $\obw$ are distinct. Thus by Lemma~\ref{basica}(b),
$C$ is cyclic, a contradiction and so 
$(\obw, \obell)\in G_{ p/q, C,  n}$.

The second case is when $C$ is cyclic. Using the given pinch point
$(\hb,\homega, \hbw, \hbell)$ assign the usual weighting 
$\sw_i = \hw_i$ to the 
configuration $C$. We first perform some
calculations involving just the weighted
configuration and then connect the results to the pinch point maps. 

Let $\sJ = [0, 1/q]$ and $\sw_{n_0}, \dots, \sw_{n_{q-1}}$ be the
weights with $R^i_{p/q}(\sJ) \subset X_{n_i}$. For $N = 0, \dots, q-1$
let 
\begin{equation*}
Q_N(y) = \prod_{i=0}^N (1 + \sw_{n_i} y)
\end{equation*}
In addition,
let $\{i_\alpha\}$  $\{\oi_\beta\} $ and be such that 
\begin{equation}\label{union}
\begin{split}
\sX_1 &= \bigcup_{\alpha} R^{i_\alpha}_{p/q}(\sJ)\\ 
\sX_2 &= \bigcup_{\beta} R^{\oi_\beta}_{p/q}(\sJ).
\end{split}
\end{equation}
 This is possible because
 $C$ is cyclic. Finally, let
\begin{equation*}
\begin{split}
P_1(y) &= \sum_\alpha Q_{i_\alpha}(y)\\
P_1(y) &= \sum_\beta Q_{\oi_\beta}(y) 
\end{split}
\end{equation*}

We now return to considerations of the pinch point
$  (\hb,\homega, \hbw, \hbell) $. Using the connection of the
weighted configuration to the map, the  coupling
$\hb$ must be the unique solution to  $Q_{q-1}(y)=1$ in $(0,1)$.
 In addition,
if $\hJ = [0,\ha]$ where $\ha$ is the point on the orbit of $0$
closest to $0$ in the positive direction, then as a consequence
of \eqref{union}
\begin{equation*}
\begin{split}
\hX_1 &= \bigcup_{\alpha} f^{i_\alpha}_{\hb,\homega, \hbw, \hbell}(\hJ)\\ 
\hX_2 &= \bigcup_{\beta} f^{\oi_\beta}_{\hb,\homega, \hbw, \hbell}(\hJ).
\end{split}
\end{equation*}
where $\hX_1 = [0, \hx_1]$ and $\hX_2 = [\hx_2, \hx_3]$.
Thus by \eqref{compute}, 
\begin{equation*}
\begin{split}
\hell_1 &= |\hX_1| =  P_1(\hb) |J|\\ 
\hell_2 &= |\hX_2| =  P_2(\hb) |J|,
\end{split}
\end{equation*}

Now using Lemma~\ref{basica}(c) 
perturb $\hbell$ to $\obell$ so that $\hell_1/\hell_2 \not =
\oell_1/\oell_2$ while keeping $(\hbw, \obell) \in \TPLk$.
 If  $(b, \omega, \hbw, \obell)$ is not a pinch point
with configuration $C$ for all $b,\omega$ we are done, so assume it 
$ (\ob, \oomega, \hbw, \obell)$ is. Since 
$f_{\ob, \oomega, \hbw, \obell}$  and 
$f_{\hb, \homega, \hbw, \hbell}$ have the same weights  
and the same configuration, they have the same
 weighted configurations. 
 Thus $\ob$ is also the unique
solution in $(0,1)$ of  $Q_{q-1}(y)=1$ and so $\ob = \hb$. Further,
adjusting the definition of $\hJ$, $\hX_1$ and $\hX_2$
to those of $f_{\ob, \oomega, \hbw, \obell}$,
 then \eqref{union} also holds implying
\begin{equation*}
\begin{split}
\oell_1 &= |\oX_1| =  P_1(\ob) |\oJ|\\ 
\oell_2 &= |\oX_2| =  P_2(\ob) |\oJ|.
\end{split}
\end{equation*}
Now using the fact that $\ob = \hb$, $\oell_1/ \oell_2 = \hell_1/\hell_2$,
a contradiction. Thus the perturbation $(\obw,\obell) \in
G_{ p/q, C, m, n}$ as needed.

Every $p/q$-pinch point must correspond to some allowable 
$p/q$-configuration, and so the collection of reduced standard-like forcings
with $k$ break points which have  no pinching
in any of their rational Arnol'd tongues is 
\begin{equation}\label{yay}
G = \bigcap_{p/q, C\in\mathcal{A}_{k,p/q}, n} G_{ p/q,C, n}
\end{equation}
which is dense, $G_\delta$ in $\TPLk$ by the Baire Category Theorem.

The last step is to extend the result to $\mathcal{S}^{PL}_k$. Define
$$
\Psi: \RR\times S^1 \times \TPL_k\raw \mathcal{S}^{PL}_k
$$
via 
$\Psi(r,s, \bw, \bell) = r + \phi_{\bw,\bell}\circ T_s$,
and so $\Psi$ is continuous. Let
$$
\mathcal{B} = \{(r,s, \bw, \bell)\in \RR\times S^1 \times \TPL_k :
w_i = -1 \ \text{for at least two indices}\}
$$
and 
$$
\mathcal{C} = \{\psi \in \mathcal{S}^{PL}_k:  
\psi' = -1 \ \text{on at least 2 intervals of definition}\}.
$$
Then both $\mathcal{B}$ and $\mathcal{C}$ are closed and
nowhere dense and  $\Psi(\mathcal{B}) = \mathcal{C}$.

If 
$\psi\in \mathcal{B}^c$, then $\psi' = -1$ on a unique interval
of definition. If this 
interval has left endpoint $p$, then 
$-\psi(p) + \psi(x+p)\in\SPLk$ and so 
for some $(\bw, \bell)$, we have
$-\psi(p) + \psi(x+p)= \phi_{\bw, \bell}$.
Thus 
\begin{equation*}
\Psi(\psi(p), -p, \bw, \bell ) = \psi.
\end{equation*} 
These choices depend continuously on $\psi$, and
so $\Psi$ restricts to a homeomorphism 
$\mathcal{B}^c \raw \mathcal{C}^c$.

We showed above that $G$ defined in \eqref{yay} is
dense, $G_\delta$ in $\TPLk$ and so 
$G' = \RR\times S^1 \times G$ is dense, $G_\delta$
in $\RR\times S^1 \times \TPLk$ and thus in 
$\mathcal{B}^c$. Since $\Psi$ restricts to a homeomorphism 
$\mathcal{B}^c \raw \mathcal{C}^c$, $\Psi(G')$ is
dense, $G_\delta$ in $\mathcal{C}^c$. But $\mathcal{C}$
is closed, and nowhere dense in $\mathcal{S}^{PL}_k$ and
so $\tG = \Psi(G')$ is
dense, $G_\delta$ in all of $\mathcal{S}^{PL}_k$.

We finally have to see the consequences for pinching.
Using the underlying functions,
let $\psi = \Psi(r,w,\phi)$  and so $\psi = r + \phi\circ R_s$.
Thus
$$
f_{b,\omega, \psi} = R_s^{-1} f_{b,br + \omega, \phi} R_s
$$
and so 
$\rho(f_{b,\omega, \psi}) = \rho( f_{b,br + \omega, \phi})$.
Thus the $(\omega, b)$ diagram for $\psi$ is obtained by the linear
transformation $(\omega,b) \mapsto (br + \omega, b)$  from that
of $\phi$. This 
does not alter pinching of the rational
tongues. Thus since all $(\hw, \hell)\in G$ have
no pinching, all $\psi$ in the dense, $G_\delta$ set $\tG$ in
$\mathcal{S}^{PL}_k$ have no pinching in any of their rational
tongues.
\end{proof}

\begin{remark}
It is worth noting why the above proof does not work when there
are $k=2$ break points. The  second part of
the proof uses  Lemma~\ref{basica}(c) to 
perturb
$\bell$ and leaves $\bw$ alone, all the while remaining in $\TPLk$.
When $k=2$ the equations defining $\TPLk$ imply that 
$\ell_1 = 1 - \ell_2$ and $(1 + w_2)\ell_2 = 1$ and so $\bell$
and $\bw$ can't be perturbed independently while remaining in 
$\TPL_2$.
\end{remark}

\section{PL forcing: $k=2$}\label{last}
As illustrated by the theorem of Campbell, Galeeva, Tresser, and Uherka
the case of $k=2$ break points is quite different than $k>2$. 
In particular, there are infinitely many  pinched rational
tongues in the $(\omega, b)$ diagram of every standard-like $\phi$.
For completeness we give an alternative proof of their theorem using
the methods of this paper.

\subsection{preliminaries}

The equations defining $\TPL_2$ are $\ell_1, \ell_2 > 0$,
$w_2 > -1$ and 
\begin{equation}\label{newpair}
\begin{aligned}
-\ell_1 + w_2 \ell_2 &= 0\\
\ell_1 + \ell_2 &= 1.
\end{aligned}
\end{equation}
Solving, $\ell_1 = 1-\ell_2$ and  $(1 + w_2) \ell_2 = 1$, so there
is  a one-parameter family of reduced standard-like forcings
with two break points. 
In the context of the methods in
this paper we use $w_2$ as the parameter
and call it just $w$. Note that \eqref{newpair} forces
$w> 0$.  We  denote the corresponding
forcing as $\phi_w \in \SPL_2$. The standard-like family
is
$$
f_{b,\omega, w}(x) = x + \omega + b\; \phi_w(x).
$$
The break points for $\phi_w$ and $f_{b,\omega, w}$ are
$0$ and $\ell_1 =\frac{w}{w+1}$, which we denote as $x_u$ and $x_d$
respectively.
A $p/q$-pinch point is now a triple $ (b, \omega, w)$ 
with $\tf_{b, \omega, w} ^q = R_p$.

As in the previous sections the polynomials associated
with the derivative of $f_\eta^q$ are the main tool.
For $j=0, \dots, q$ let
\begin{equation}\label{thepoly}
p_{q,j, w}(y) = (1-y)^j (1+w y)^{q-j}.
\end{equation}
Using Lemma~\ref{calculus}, $p_{q, j, w}(y) = 1$ has a solution
$y\in(0,1)$  if and only if 
$-j + w(q-j)>0$ or using \eqref{newpair}, 
$ 0 < j \leq  \tau(q, w)$ where $\tau(q, w) = 
\ceiling{\frac{w}{w+1}q}-1 =\ceiling{\ell_1 q} - 1$. 
For $j = 1, \dots, \tau(q,w)$ let $b_{q,j, w}$ be the
unique root of $p_{q, j, w}(y) = 1$ in $(0,1)$ as given
by Lemma~\ref{calculus}.

For future reference 
note that  for a fixed $b\in (0,1)$ the values 
$p_{q, j, w}(b)$ are reverse ordered by $j$ or
\begin{equation}\label{reverse}
p_{q, 1, w}(b) > p_{q, 2, w} (b) > \dots  > p_{q, \tau(q), w}(b).
\end{equation}

We now connect $\eqref{thepoly}$ to the derivative
of $f_{b, \omega, w}$ using a standard address/itinerary scheme.
Let  $X_{-1}$ be the interval in the circle counter-clockwise from
$x_d$ to $x_u$  and $X_{1}$ be the interval counter-clockwise from
$x_u$ to $x_d$. Thus
if  $x\in X_{-1}$ then $\phi_w'(x) = -1$ and
 $f_{b, \omega, w}'(x) = 1- b$ and if  $x\in X_{1}$ then
 $\phi_w'(x) = w$ and
 $f_{b, \omega, w}'(x) = 1+bw$.

For $x$ with $\{x, \dots, f_{b, \omega, w}^{q-1}(x)\} \cap \{x_d, x_u\}
 = \emptyset$
define $\iota(x) \in \{-1, 1\}^q$ by $\iota(x)_i = j$ if 
$f_{b, \omega, w}^i(x)\in X_j$
and let $\gamma(x)$ be the number of times $-1$ occurs in $\iota(x)$. Thus
using the chain rule
\begin{equation}\label{chain2}
(f_{b, \omega, w}^q)'(x) = 
(1-b)^{\gamma(x)} (1+w b)^{q-\gamma(x)}= p_{q, \gamma(x), w}(b).
\end{equation}

Let $\Lambda_j = \{ x : \gamma(x) = j\}$ and so $\Lambda_j$ is a finite union
of intervals and $(f_{b, \omega, w}^q)'(x) = p_{q,j, w}(b)$ 
when $x\in \Lambda_j$.
 Let $\lambda_j = \lambda(\Lambda_j)$. Now $\cup \Lambda_j$ is the entire
circle minus a finite set of points and 
since $f_{b, \omega, w}^q$ is a degree one circle map, 
$1 = \int_{S^1} (f_{b, \omega, w}^q)' \; d\lambda$. Thus
the $p_{q,j, w}(b)$ and $\lambda_j$ satisfy a system of equations
\begin{equation}\label{system}
\begin{aligned}
\sum_{j=0}^q \lambda_j &= 1\\
\sum_{j=0}^q p_{q,j,w}(b) \lambda_j &= 1
\end{aligned}
\end{equation}

\subsection{The pinching theorem}

\begin{lemma}\label{lastlem}
If $b = b_{q, j, w}$ for some $1 \leq j \leq \tau(q)$ and 
in addition, for some $\omega$, 
$x_d$ is a $p/q$-periodic orbit for  $f_{b, \omega,w}$, then
$(b,\omega, w)$ is a $p/q$-pinch point.
\end{lemma}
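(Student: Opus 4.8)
The plan is to argue by contradiction: assume $(b,\omega,w)$ is \emph{not} a $p/q$-pinch point, i.e.\ that $h:=\tf_{b,\omega,w}^{q}-R_{p}\not\equiv 0$, and extract a contradiction from the two hypotheses. First I would record the combinatorial picture. Since $x_{d}$ lies on a $p/q$-periodic orbit $O$ we have $\rho(f_{b,\omega,w})=p/q$, and since $b=b_{q,j,w}\in(0,1)$ the map is an orientation-preserving homeomorphism; thus $h$ is continuous, $1$-periodic, piecewise linear, and vanishes on the $q$ points of $O$. If $x_{u}$ also lay on $O$, then $O$ would be a periodic orbit carrying both break points, one of type up and one of type down, so Lemma~\ref{another} would give $\tf_{b,\omega,w}^{q}=R_{p}$, against $h\not\equiv0$; hence $x_{u}\notin O$. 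Then the break points of $h$ are exactly the $q$ points of $O$ together with the $q$ distinct points $z_{i}=f_{b,\omega,w}^{-i}(x_{u})$ (distinct because every periodic orbit here has period $q$), and since $\gcd(p,q)=1$ exactly one $z_{i}$ falls in each arc of $S^{1}\setminus O$. On every arc between consecutive break points $h'=p_{q,\gamma,w}(b)-1$ by \eqref{chain2}, with $\gamma$ the locally constant count of visits of the orbit to $X_{-1}$ in $q$ steps; tracing itineraries through $x_{d}$ and through $x_{u}$ shows that, moving forward, $\gamma$ jumps by $+1$ across each point of $O$ and by $-1$ across each $z_{i}$.

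The numerical ingredient I would use is that $m\mapsto p_{q,m,w}(b)$ is strictly decreasing on $\{0,\dots,q\}$, since the ratio of consecutive terms is $(1-b)/(1+wb)<1$ (this sharpens \eqref{reverse}), and equals $1$ at $m=j$; so the slope $p_{q,m,w}(b)-1$ of $h$ is positive for $m<j$, zero for $m=j$, negative for $m>j$. From this I would first deduce $h\le0$: at a boundary point $c$ of a connected component of $\{h=\max h\}$, $h$ strictly decreases just ahead of $c$, so the $\gamma$-value there exceeds $j$; comparing with the value just behind $c$ (which is $\le j$, and equals $j$ when the component is nondegenerate), and using that $\gamma$ changes by only $\pm1$ across $c$, the only consistent possibility is a $+1$ jump, whence $c\in O$ and $\max h=h(c)=0$. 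With $h\le0$ and $h(x_{d})=0$, the point $x_{d}$ is a maximum of $h$; writing $\gamma^{-},\gamma^{+}=\gamma^{-}+1$ for the $\gamma$-values just behind and ahead of $x_{d}$ (the $+1$ because $x_{d}\in O$), the maximum condition forces $\gamma^{-}\in\{j-1,j\}$.

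To finish I would treat the two cases. If $\gamma^{-}=j$, then $h\equiv0$ on the arc immediately behind $x_{d}$, reaching back to the unique $z_{i}$ in that arc, where therefore $h(z_{i})=0$; but on the next arc behind (with $\gamma$-value $j+1$) $h$ has negative slope, so running forward from the neighboring point of $O$, where $h=0$, it reaches a negative value at $z_{i}$, contradicting $h(z_{i})=0$. If $\gamma^{-}=j-1$, then $\gamma^{+}=j$, $h\equiv0$ on the arc immediately ahead of $x_{d}$ out to the $z_{i}$ in that arc (so $h(z_{i})=0$), and on the next arc ahead (with $\gamma$-value $j-1$) $h$ has positive slope, so it runs strictly upward from $0$ and becomes positive, contradicting $h\le0$. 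Either way $h\not\equiv0$ is impossible, so $\tf_{b,\omega,w}^{q}=R_{p}$, i.e.\ $(b,\omega,w)$ is a $p/q$-pinch point.

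The part I expect to be the real work is the combinatorial bookkeeping around those $2q$ break points: verifying from $x_{u}\notin O$ and $\gcd(p,q)=1$ that each arc of $S^{1}\setminus O$ holds exactly one preimage of $x_{u}$, that the orbit of a point near such a $z_{i}$ meets $x_{u}$ exactly once within $q$ steps, and above all getting the sign of each $\pm1$ jump of $\gamma$ right, since those signs are precisely what make the extremum argument close. By contrast, the monotonicity of $m\mapsto p_{q,m,w}(b)$ and the reduction to $x_{u}\notin O$ via Lemma~\ref{another} are routine.
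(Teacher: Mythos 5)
Your proof is correct, but it closes the argument by a genuinely different route than the paper. After the common reduction (dispose of $x_u\in O$ via Lemma~\ref{another}, use the chain rule \eqref{chain2} and the fact that $m\mapsto p_{q,m,w}(b)$ is strictly decreasing with $p_{q,j,w}(b)=1$), the paper exploits the invariant partition of $S^1$ by the orbit of $x_d$ to show that $(f_{b,\omega,w}^q)'$ takes only the two adjacent values $p_{q,\gamma,w}(b)$ and $p_{q,\gamma+1,w}(b)$ globally, and then finishes with the integral identity $\int_{S^1}(f^q)'\,d\lambda=1$ (the system \eqref{system}): nonnegativity of the two lengths $\lambda_\gamma,\lambda_{\gamma+1}$ together with the ordering \eqref{reverse} forces the derivative to be identically $1$. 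You instead run a contradiction on the displacement $h=\tf^q-R_p$: the $2q$ potential break points ($O$ plus the preimages $z_i$ of $x_u$, one per gap of $O$), the $+1$/$-1$ jump pattern of $\gamma$ (your signs are right), the resulting sign of the slope of $h$ on each arc, a maximum argument giving $h\le 0$, and a local analysis at $x_d$ in the two cases $\gamma^-\in\{j-1,j\}$. The substance overlaps — your "one $z_i$ per gap" is the same structural fact the paper packages as the two-value property of the derivative — but the endgames differ: the paper's measure/integral argument is shorter and avoids the break-point bookkeeping, while your maximum-principle argument is more elementary, never needs the global two-value observation or the identity \eqref{system}, and yields the extra information $h\le 0$ for any $\omega$ making $x_d$ periodic (the paper only gets this one-sidedness later, in Theorem~\ref{didit}, by choosing $\omega$ on the left tongue boundary). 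The only cosmetic quibble is your word "exactly" for the break points of $h$ (only "at most" is needed, though strict monotonicity of $p_{q,m,w}(b)$ in $m$ does make it exact); all the steps you flagged as the real work (distinctness of the $z_i$, one per arc via $\gcd(p,q)=1$, the jump signs) check out.
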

\begin{proof}

The orbit of $x_d$ divides the circle into an invariant partition by
intervals. The point $x_u$ is in one of them, say $X_0$.
 If it is an endpoint, then both break points are on the same
periodic orbit and so $(b,\omega, w)$ is a pinch point by 
Lemma~\ref{another}.

The second case is that $x_u$ is in the interior of $X_0$. Let
$B_{-1} = X_{-1}\cap X_0$ and $B_{1} = X_{1}\cap X_0$. Thus
for $x_{-1}\in B_{-1}$, $\iota(x_{-1})_0 = -1$ and for  
$x_1\in B_{1}$, $\iota(x_1)_0 = 1$.
Since the partition elements are permuted, for $1 \leq j  < q$, 
$\iota(x_1)_j = \iota(x_{-1})_j$. Thus letting $\gamma :=  \gamma(x_{1})$,
we have $\gamma(x_{-1}) = \gamma+1$ and 
so by \eqref{chain2} for all $x\in S^1$, $(f_{b,\omega, w}^q)'(x)$ is either 
$p_{q, \gamma, w}(b)$ or $p_{q, \gamma+1, w}(b)$.

Since there are only two nonempty $\Lambda_j$, the system of equations
\eqref{system} reduces to
\begin{equation*}
\begin{aligned}
 \lambda_\gamma + \lambda_{\gamma+1}  &= 1\\
 p_{q,\gamma,w}(b) \lambda_\gamma + p_{q,\gamma+1,w}(b) \lambda_{\gamma+1}  &= 1
\end{aligned}
\end{equation*}
where we must have $\lambda_\gamma, \lambda_{\gamma+1} \geq 0$
since they are masses of sets. Letting $p_\gamma = p_{q,\gamma,w}(b)$ 
 the system  has solution
\begin{equation*}
\lambda_\gamma = \frac{1 - p_{\gamma+1 }}{p_\gamma -  p_{\gamma+1} },
\ \ 
 \lambda_{\gamma+1} = \frac{p_{\gamma}-1 }{p_\gamma -  p_{\gamma+1} }.
\end{equation*}
Recall now \eqref{reverse} and so  $p_\gamma -  p_{\gamma+1}>0$. There
are now a number of cases constrained by \eqref{reverse}. Recall
that the $b = b_{q, j,w}$. If $\gamma > j$, then $p_\gamma < p_j = 1$
and so $\lambda_{\gamma+1} < 0$, a contradiction. Similarly, 
$\gamma+1 < j$ implies the contradiction $\lambda_{\gamma} < 0$.
The last cases are $\gamma+ 1 > \gamma = j$ and $j = \gamma+1 > \gamma$.
Since $p_j =1$, the first case implies $\lambda_{\gamma+1} = 0$
and the second that $\lambda_{\gamma} = 0$, so in both cases 
 only $\Lambda_j$ is nonempty.

Thus in all cases we have shown that the hypothesis imply that
$(f_{b,\omega, w}^q)' \equiv 1$. Since 
$f_{b,\omega, w}$ has a $p/q$-periodic orbit this shows that
$(b, \omega, w)$ is a $p/q$-pinch point.
\end{proof}

\begin{definition}
Given $p/q$ for $j$ with $1 \leq j \leq \tau(q,w)$
and let $\omega_{p/q, j, w} = 
\min\{\omega : \rho(f_{b_{q,j,w}, \omega, w}) =
p/q\}$.
\end{definition}

Theorem $4$ in \cite{trouble} uses $\delta = \ell_1$ and so
we will use  that in our theorem statement; from \eqref{newpair}
$\delta = \frac{w}{w+1}$. 

\begin{theorem}[Campbell, Galeeva, Tresser, and Uherka] \label{didit}
Given $(w, \bell)\in \TPL_2$,
for each $1 \leq j \leq \ceiling{\delta q}-1$, with
 $\omega_{p/q, j, w}$ and $b_{q,j, w}$
defined as above,  
$(b_{q,j, w}, \omega_{p/q, j,w}, w)$ is a $p/q$-pinch point. 
Thus  each $T_{p/q}$ in the $(\omega, b)$ diagram 
for $\phi_\bw$ pinches in $\ceiling{\delta q}-1$ 
places where $\delta$ is the width of the region where $\phi_w' = -1$.
\end{theorem}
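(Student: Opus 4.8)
The plan is to reduce everything to Lemma~\ref{lastlem}, which already contains the dynamical heart of the matter; the remaining work is to verify that its two hypotheses can simultaneously be arranged for each admissible $j$, and then to recognize the resulting parameters as $(b_{q,j,w},\omega_{p/q,j,w},w)$.

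Fix $j$ with $1\le j\le\tau(q,w)=\ceiling{\delta q}-1$ and put $b=b_{q,j,w}$. By Lemma~\ref{calculus} the root of $p_{q,j,w}(y)=1$ that defines $b$ lies in $(0,1)$, so for every $\omega$ the map $f_{b,\omega,w}$ is a standard-like homeomorphism. The first step is to produce an $\omega$ for which the break point $x_d$ is a $p/q$-periodic point. Consider $h(\omega)=\tf_{b,\omega,w}^q(x_d)$. Since $\tf_{b,\omega+1,w}=\tf_{b,\omega,w}+1$, one gets $h(\omega+1)=h(\omega)+q$, and $h$ is continuous and nondecreasing in $\omega$; hence $h$ is onto $\RR$ and there is an $\omega^*$ with $\tf_{b,\omega^*,w}^q(x_d)=x_d+p$. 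Because $\gcd(p,q)=1$, the $f_{b,\omega^*,w}$-orbit of $x_d$ then has period exactly $q$ and rotation type $p/q$, so $\rho(f_{b,\omega^*,w})=p/q$ and $x_d$ lies on a $p/q$-periodic orbit.

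Now Lemma~\ref{lastlem} applies with this $b=b_{q,j,w}$ and $\omega=\omega^*$: both of its hypotheses hold, so $\tf_{b,\omega^*,w}^q=R_p$, i.e. $(b,\omega^*,w)$ is a $p/q$-pinch point. By the characterization of $T_{p/q}$ recalled earlier, a line of constant $b$ meets $T_{p/q}$ in a single point precisely when $\tf_{b,\omega,w}^q=R_p$; therefore the slice $\{\omega:\rho(f_{b,\omega,w})=p/q\}$ is the single point $\{\omega^*\}$, and in particular $\omega^*=\min\{\omega:\rho(f_{b_{q,j,w},\omega,w})=p/q\}=\omega_{p/q,j,w}$. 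This shows that $(b_{q,j,w},\omega_{p/q,j,w},w)$ is a $p/q$-pinch point for each $1\le j\le\ceiling{\delta q}-1$.

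Finally, these pinch points occur at distinct coupling values: if $b_{q,j_1,w}=b_{q,j_2,w}=b$ with $j_1<j_2$, then $p_{q,j_1,w}(b)=1=p_{q,j_2,w}(b)$, contradicting the strict ordering \eqref{reverse} valid at every $b\in(0,1)$. Hence $T_{p/q}$ pinches at exactly the $\ceiling{\delta q}-1$ distinct values $b_{q,1,w},\dots,b_{q,\tau(q,w),w}$, which is the assertion. The only delicate point is the identification $\omega^*=\omega_{p/q,j,w}$: a priori the set of $\omega$ making $x_d$ periodic of type $p/q$ could be a nondegenerate interval, and it is precisely the pinching conclusion of Lemma~\ref{lastlem} (the collapse of the whole tongue slice to a point) that rules this out and pins $\omega^*$ to the left edge of the tongue.
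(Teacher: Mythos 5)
Your proposal is correct, and it reaches the conclusion by a slightly different route than the paper, although both arguments rest on the same key step, Lemma~\ref{lastlem}. The paper starts from the left boundary of the tongue: by definition $\omega_{p/q,j,w}$ puts $(\omega,b)$ on the left edge of $T_{p/q}$, so $\tf^q\leq R_p$ with equality at some $x_0$, and a slope argument (the graph of $\tf^q$ touches the line $R_p$ from below, which forces a down-type break of $\tf^q$ at $x_0$) shows $x_0$ lies on the orbit of $x_d$, hence $x_d$ is $p/q$-periodic; Lemma~\ref{lastlem} then gives the pinch. You instead manufacture the hypothesis of Lemma~\ref{lastlem} directly: the intermediate-value argument with $h(\omega)=\tf^q_{b,\omega,w}(x_d)$, $h(\omega+1)=h(\omega)+q$, produces an $\omega^*$ making $x_d$ a $p/q$-periodic point, Lemma~\ref{lastlem} yields $\tf^q_{b,\omega^*,w}=R_p$, and you then identify $\omega^*$ with $\omega_{p/q,j,w}$ a posteriori via the standard fact that $\tf^q_{b,\omega}=R_p$ forces the constant-$b$ slice of $T_{p/q}$ to be a single point. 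Your closing remark correctly flags that this identification is where the collapse of the slice is used, and your construction has the side benefit of proving that the set $\{\omega:\rho(f_{b_{q,j,w},\omega,w})=p/q\}$ is nonempty, so that $\omega_{p/q,j,w}$ is well defined (the paper takes this for granted from the general tongue properties). The paper's route is shorter given the boundary characterization of $T_{p/q}$ already recorded in the preliminaries, while yours avoids the graph-touching/slope argument entirely. Your extra observation that the levels $b_{q,1,w},\dots,b_{q,\tau(q,w),w}$ are pairwise distinct, via the strict ordering \eqref{reverse}, is a worthwhile addition since the count of pinch places in the statement implicitly requires it.
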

\begin{proof} For simplicity of notation let
$L = f_{b_{q,j,w}, \omega_{p/q, j,w}, w}$ 
Now $(\omega_{p/q, j},b_{q,j} )$ is on the left
boundary of $T_{p/q}$ and so $\tL^q \leq R_p$
with equality at some point $x_0$. Since the graph of 
$\tL^q$ is below the graph of $R_p$,
the slope at $x_0$ must decrease from left to right as $x$ increases.
Since $L' > 0$ everywhere it is defined,
this can only happen at $x_0$ if $x_0 = L^{j}(x_d)$
for some $0 \leq j < q$. But since $\tL^q(x_0)
= x_0 + p$, we have that $x_d$ is a $p/q$-periodic orbit
for   $L$. Thus by Lemma~\ref{lastlem},
$(b_{q,j, w}, \omega_{p/q, j,w}, w)$ is a $p/q$-pinch point.
\end{proof} 
Theorem~\ref{pinchmain} in the Introduction follows
by extending this result to all of $\SPL_2$ 
as in the end of the proof of Theorem~\ref{lipgen}.

\bibliography{actual}{}
\bibliographystyle{plain}

\end{document}